\newtheorem{conjecture}{Conjecture}[section]
\newcommand{\pp}[2]{\frac{\partial#1}{\partial#2}}
\theoremstyle{definition}
\newtheorem{thm}{Theorem}[section]
\newtheorem{cor}[thm]{Corollary}
\newtheorem{lem}[thm]{Lemma}
\newtheorem{prop}[thm]{Proposition}
\newtheorem{defn}[thm]{Definition}
\newtheorem{rem}[thm]{Remark}
\numberwithin{equation}{section}
\newcommand{\ddt}[1]{\frac{\mathrm{d}#1}{\mathrm{d}t}}
\title{Combinatorial Ricci flows on infinite disk triangulations}
\author{Huabin Ge, Bobo Hua, Puchun Zhou
}
\newcommand{\R}{\mathbb{R}}
\newcommand{\Z}{\mathbb{Z}}
\newcommand{\EE}{\mathcal{E}}
\date{}
\providecommand{\classification}[1]
{
	\small	
	\textbf{Mathematics Subject Classification (2020):} #1
}
\begin{document}
	\maketitle
	\begin{abstract}
	In this paper, we introduce combinatorial Ricci flows (CRFs in short) in Euclidean and hyperbolic background geometries on infinite triangulations of the open disk, which are discrete analogs of Ricci flows on simply connected open surfaces. We establish well-posedness results, the existence and the uniqueness, of CRFs in both Euclidean and hyperbolic background geometries.  Moreover, we prove convergence results of CRFs, which indicate a uniformization theorem for CRFs on infinite disk triangulations. As an application, we prove an existence result of circle-packing metrics with infinite prescribed cone angles in hyperbolic background geometry.  
  To our knowledge, these are the first results of CRFs on infinite triangulations.
   \\
\classification{52C26, 51M10, 57M50}	
		
	\end{abstract}

\section{Introduction} \label{sec:1}
The circle packing is a discrete analog of the conformal structure on a Riemann surface, which Thurston rediscovered in \cite{thurston1980geometry} for studying three-dimensional hyperbolic manifolds. Rodin and Sullivan proved that hexagonal circle packings can be used to approximate the Riemann mapping \cite{Rodin_Sullivan}, confirming a conjecture of Thurston. The first theorem of the existence of circle packings on surfaces was proven by Koebe \cite{koebe1936origin}, which is now called the Koebe-Andreev-Thurston theorem: every finite simple planar graph $G$ admits a circle packing on the plane $\mathbb{R}^2$ whose contact graph is $G$. Later, Colin de Verdi\`ere \cite{MR1106755} gave an alternative proof of the existence and uniqueness of circle packings with a variational method. 

Inspired by Hamilton's Ricci flow and the convergence of Ricci flows on compact surfaces \cite{ham,Chow1}, Chow and Luo \cite{2003Combinatorial}  introduced combinatorial Ricci flows (CRFs in short) for finding circle-packing metrics with zero discrete Gaussian curvatures on triangulations $\mathcal{T}=(V,E,F)$ of compact surfaces,
\begin{align}
    &\ddt{r_i}=-K_ir_i,~~\forall i\in V,\quad \mathrm{in\ Euclidean\ background\ geometry},\label{unnormalized}\\
    &\ddt{r_i}=-K_i\sinh {r_i},~~\forall i\in V, \quad \mathrm{in\ hyperbolic\ background\ geometry,}\label{hyperbolic_flow}
\end{align} where $r_i(t)$ ($K_i$ resp.) is the radius of the circle centered (discrete Gaussian curvature resp.) at the vertex $i$ for $t\geq 0.$ These can be regarded as some discrete variants of fully nonlinear parabolic partial differential equations.
They proved that the CRF (with some possible modification) converges if the intersection angle $\Phi\in [0,\frac{\pi}{2}]^E,$ i.e. $\Phi:E\to [0,\frac{\pi}{2}],$ satisfies certain combinatorial conditions. A circle-packing metric $r$ with the intersection angle $\Phi$ on a triangulation $\mathcal{T}$ will be defined in Section \ref{sec:2}.
Later, other combinatorial curvature flows had been introduced for studying discrete conformal structures, such as the combinatorial Calabi flow \cite{ge_phd} and the combinatorial Yamabe flow introduced by Luo \cite{luo2004combinatorial}. In addition, combinatorial curvature flows are also introduced for studying 3-dimensional sphere packings and hyperbolic polyhedra; see e.g. \cite{MR3269185,MR4466650}.


It is an important topic to study circle packings on infinite triangulations of noncompact surfaces. The first result of the rigidity was obtained by Rodin and Sullivan in \cite{Rodin_Sullivan}: all the univalent circle packings of the hexagonal triangulation are regular hexagonal packings, as shown in Figure \ref{regular}, where all circles have the same radii. After that, the results on the classification of circle packings on infinite triangulations of the plane $\mathbb{C}$ were proved by He-Schramm \cite{He_schramm} and He \cite{HE}. All these results are based on approaches initiated from elliptic partial differential equations or conformal geometry. A natural question is how to use parabolic methods to study circle packings on infinite triangulations. 

We recall some results on Ricci flows on noncompact manifolds. The well-posedness of the Ricci flow on a noncompact manifold is the key problem. On the one hand, Shi  \cite{Shi_noncompact}  first proved the short-time existence of the Ricci flow on a noncompact Riemannian manifold if the Riemann curvature tensor of the initial metric is uniformly bounded. See e.g. \cite{MR2832165,MR3091259,MR3728651,MR3429162,MR3480020,MR3958792} for more results on the existence of Ricci flows. On the other hand, the uniqueness of a parabolic equation on a noncompact manifolds is a tricky problem. Even for the linear heat equation, the uniqueness of the Cauchy problem fails by the well known example of Tychnoff, and holds under additional assumptions \cite{MR834612,MR860324}. On a noncompact manifold, Chen and Zhu \cite{MR2260930} proved the uniqueness for the solution of the Ricci flow with uniformly bounded curvature.  Chen \cite{MR2520796} proved the uniqueness of the Ricci flow on a three-dimensional noncompact manifold if the initial metric has nonnegative and bounded sectional curvature.  See \cite{MR4015429,MR4494617} for other related results. Note that the existence and uniqueness of Ricci flows on surfaces were extensively studied by Giesen-Topping \cite{MR2832165} and Topping \cite{MR3352241}, which even include results for incomplete Riemann surfaces with unbounded curvature; see also \cite{ToppingYin24}.

In this paper, we first introduce the CRF on an infinite disk triangulation in either Euclidean or hyperbolic background geometry. Here, a \textbf{disk triangulation} refers to a triangulation that is simply connected with only one end. We prove the well-posedness result and the convergence result in both Euclidean and hyperbolic background geometry, which can be regarded as discrete counterparts of the results by Shi \cite{Shi_noncompact} and Chen-Zhu \cite{MR2260930}.
\begin{thm}[Well-posedness of the CRF]\label{wellposed-Euc}

Let $\mathcal{T}=(V,E,F)$ be an infinite disk triangulation with intersection angle $\Phi\in [0,\frac{\pi}{2}]^E.$ For any initial data $r(0),$ there exists a solution $r(t), t\in[0,\infty),$ to the flow \eqref{unnormalized} or \eqref{hyperbolic_flow} . Moreover,  the solution to the flow with uniformly bounded discrete Gaussian curvatures on $V\times[0,\infty)$ is unique.
\end{thm}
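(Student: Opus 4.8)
The plan is to reduce both flows to a single combinatorial heat-type equation and then treat existence and uniqueness separately. In the Euclidean case I set $u_i=\log r_i$, and in the hyperbolic case $u_i=\log\tanh(r_i/2)$; a direct computation shows that in both geometries the flows \eqref{unnormalized} and \eqref{hyperbolic_flow} become the single autonomous system $\ddt{u_i}=-K_i(u)$ on the conformal variable $u$. The reason $u$ ranges over an unconstrained space is the standard fact for $\Phi\in[0,\frac\pi2]^E$ that every assignment of positive radii yields nondegenerate triangles, so there is no degeneration locus to avoid; this is recorded in Section \ref{sec:2}. I will also use the elementary geometric bound $2\pi-\pi d_i< K_i<2\pi$, valid for any admissible configuration because each interior angle lies in $(0,\pi)$, where $d_i$ denotes the degree of $i$.

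For existence I would run an exhaustion-and-compactness argument. Fix finite subcomplexes with $V_1\subset V_2\subset\cdots$ and $\bigcup_n V_n=V$, and on each solve the finite CRF with the radii outside $V_n$ frozen at their initial values; by the finite theory of Chow--Luo \cite{2003Combinatorial} these truncated flows exist for all $t\ge0$, since the curvature bound forces $u_i(0)-2\pi t\le u_i^{(n)}(t)\le u_i(0)+(d_i-2)\pi t$, keeping each coordinate in a compact subinterval and preventing finite-time degeneration. These same bounds give a uniform-in-$n$ Lipschitz estimate $\big|\ddt{u_i^{(n)}}\big|=|K_i^{(n)}|\le\max\{2\pi,(d_i-2)\pi\}$ on every interval $[0,T]$. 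Hence for each fixed vertex the family $\{u_i^{(n)}\}_n$ is equibounded and equi-Lipschitz; Arzel\`a--Ascoli together with a diagonal extraction over the countable vertex set and over $T\uparrow\infty$ yields a subsequence converging, uniformly on compact time intervals, to some $u_i(t)$. Since each $K_i$ is a smooth function of only finitely many neighboring coordinates, passing to the limit in the integrated equation $u_i^{(n)}(t)=u_i(0)-\int_0^t K_i^{(n)}\,\mathrm{d}s$ shows the limit solves $\ddt{u_i}=-K_i$, giving a global solution.

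For uniqueness, let $r(t),\tilde r(t)$ be two solutions with the same initial data and with $|K_i|,|\tilde K_i|\le C$ on $V\times[0,\infty)$, and set $w_i=u_i-\tilde u_i$. Writing the curvature difference via the fundamental theorem of calculus, $K_i(u)-K_i(\tilde u)=\sum_j L_{ij}(t)\,w_j$ with $L_{ij}(t)=\int_0^1\frac{\partial K_i}{\partial u_j}(\tilde u+sw)\,\mathrm{d}s$, the difference obeys the linear combinatorial heat equation $\ddt{w_i}=-\sum_j L_{ij}(t)\,w_j$. At each time $L(t)$ is the Hessian of the combinatorial Ricci potential, hence symmetric and positive semidefinite, with nonpositive off-diagonal entries ($L_{ij}\le0$ for $j\sim i$); it therefore has the form of a weighted graph Laplacian, plus in the hyperbolic case a nonnegative diagonal potential. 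The decisive input from bounded curvature is the spatial $\ell^\infty$ estimate $|w_i(t)|\le\int_0^t|K_i-\tilde K_i|\,\mathrm{d}s\le 2Ct$, so $w(t)$ is uniformly bounded on $V$ for each finite $t$.

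I then conclude $w\equiv0$ by a cutoff energy estimate. Multiplying by $w_i\eta_i$, with $\eta$ a spatial cutoff equal to $1$ on a combinatorial ball $B_R$ and $0$ off $B_{2R}$, summing over $V$, and using summation by parts with the symmetry and sign structure of $L(t)$, gives $\ddt{}\sum_i\tfrac12 w_i^2\eta_i\le-(\text{nonnegative Dirichlet term})+\mathrm{Err}_R(t)$, where after Young's inequality $\mathrm{Err}_R(t)\lesssim\sum_{i\sim j}(-L_{ij})\,w_j^2(\eta_i-\eta_j)^2$ is supported on the annulus $B_{2R}\setminus B_R$. Using $|w_j|\le 2Ct$ and $|\eta_i-\eta_j|\lesssim R^{-1}$ bounds this by $(Ct)^2R^{-2}\sum_{\text{annulus}}(-L_{ij})$, which tends to $0$ as $R\to\infty$ once the accumulated Laplacian weight grows at most quadratically. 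I expect this last step --- converting the bounded-curvature hypothesis into the control on the weights $-L_{ij}$ and on the volume growth of the graph needed to annihilate the cutoff error, thereby ruling out Tychonoff-type nonuniqueness --- to be the main obstacle, exactly as in the continuous uniqueness theorem of Chen--Zhu \cite{MR2260930}. Letting $R\to\infty$ and using $w(0)=0$ then forces $w\equiv0$, i.e. $r=\tilde r$.
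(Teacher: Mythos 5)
Your existence argument matches the paper's for the Euclidean flow, but it has a gap in the hyperbolic case. There the configuration space is $u\in(-\infty,0)^V$ (equivalently, $r_i<\infty$), and your bound $u_i^{(n)}(t)\le u_i(0)+(d_i-2)\pi t$ does \emph{not} keep the coordinates in a compact subinterval of $(-\infty,0)$: it allows $u_i$ to reach $0$ in finite time, i.e.\ $r_i\to\infty$ along the flow \eqref{hyperbolic_flow}, so finite-time degeneration of the truncated flows is not excluded. The paper closes exactly this hole with Proposition~\ref{hyp_char}: when $u_j$ is close to $0$ every angle $\theta_j^{kl}$ drops below $2\pi/\deg(j)$, forcing $K_j>0$ and hence $\ddt{u_j}<0$, which yields a barrier $\bar\delta_j<0$ that $u_j^{[i]}(t)$ can never cross (see \eqref{hyperconstant}). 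You need some version of this angle-degeneration argument; the claim that ``there is no degeneration locus to avoid'' is only correct for the Euclidean variable.

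For uniqueness, your reduction to the linear equation $\ddt{w}=-L(t)w$ with the sign/symmetry structure of a weighted Laplacian is exactly the paper's starting point, and your $\ell^\infty$ bound $|w_i(t)|\le 2Ct$ is the right use of bounded curvature. But the cutoff energy method you propose does not close, and the step you flag as ``the main obstacle'' is genuinely fatal on a general disk triangulation: the error term is controlled by $(Ct)^2R^{-2}$ times the total weight on the annulus $B_{2R}\setminus B_R$, and since no bound on vertex growth is assumed (balls may grow exponentially, degrees may be unbounded), no quadratic-growth hypothesis is available. Moreover you never establish the needed bound on the weights $-L_{ij}$ themselves; this is nontrivial because $L_{ij}$ is evaluated at intermediate points $su+(1-s)\hat u$, not at a solution. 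The paper supplies both missing ingredients differently: (i) the angle-derivative estimate $\pp{\theta_i^{jk}}{u_j}\le C\theta_i^{jk}$ (Lemma~\ref{est_deri}, proved in the Appendix for the hyperbolic case) combined with the perturbation Lemma~\ref{compare_hyp} shows that on a short time interval $[0,\delta_1]$ with $\delta=\epsilon_0/(2\mathcal{M})$ the \emph{row sums} satisfy $\sum_{j\sim i}\omega_{ij}(t)\le 2CC_0(2\pi+\mathcal{M})$, using only that angle sums at a vertex are at most $2\pi+\mathcal{M}$; and (ii) instead of an energy estimate, a pointwise maximum principle (Lemma~\ref{mp2}, after Wu) with the linear barrier $\delta d_0(j)+(C+1)\delta t$ uses only $\Delta_{\omega(t)}d_0\le\sum_k\omega_{jk}\le C$, so no volume-growth or annulus-sum control is ever needed; iterating over intervals of length $\delta$ gives uniqueness on $[0,T]$. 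To repair your proof you should replace the cutoff energy scheme by this barrier argument (or otherwise add a growth hypothesis the theorem does not assume), and prove the row-sum bound via Lemma~\ref{est_deri} and Lemma~\ref{compare_hyp} rather than positing it.
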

\begin{rem}
    The assumption that $\Phi\in[0,\frac{\pi}{2}]^E$ in the theorem can be extended to $\Phi\in[0,\pi)^E$ with some additional conditions as in the works \cite{zhou2023generalizing,MR4334399}.
\end{rem}
In our setting, due to the discrete nature, the discrete Gaussian curvature is uniformly bounded if the combinatorial degree is uniformly bounded; see Proposition~\ref{prop:degree}. This yields the uniqueness of the CRF without assuming any growth condition of the solution in the spirit of Chen's uniqueness result \cite{MR2520796}, while we don't assume the positivity of the curvatures for the initial data.
\begin{thm}\label{thm:bdd}
    Let $\mathcal{T}=(V,E,F)$ be an infinite disk triangulation with uniformly bounded combinatorial degree and with intersection angle $\Phi\in [0,\frac{\pi}{2}]^E.$ For any initial data $r(0),$ there exists a unique solution $r(t), t\in[0,\infty),$ to the flow \eqref{unnormalized} or \eqref{hyperbolic_flow}.
\end{thm}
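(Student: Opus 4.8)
The plan is to derive Theorem~\ref{thm:bdd} directly from Theorem~\ref{wellposed-Euc} and Proposition~\ref{prop:degree}; the new content lies entirely in the \emph{unconditional} uniqueness, and the key observation is that a uniform degree bound supplies the curvature bound required by Theorem~\ref{wellposed-Euc} for free, uniformly in time.

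Existence requires no argument beyond Theorem~\ref{wellposed-Euc}, which already furnishes a global solution $r(t)$, $t\in[0,\infty)$, to \eqref{unnormalized} or \eqref{hyperbolic_flow} for arbitrary initial data on any infinite disk triangulation with $\Phi\in[0,\frac{\pi}{2}]^E$; the degree hypothesis is not used here. What must be upgraded is uniqueness: Theorem~\ref{wellposed-Euc} guarantees it only among solutions whose curvature is a priori uniformly bounded on $V\times[0,\infty)$, whereas here I claim uniqueness among \emph{all} solutions.

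The mechanism is that the uniform degree bound forces a uniform curvature bound on \emph{every} circle-packing metric, pointwise in time. Writing $K_i=2\pi-\sum_{f\ni i}\theta_i^f$, where $\theta_i^f$ is the inner angle at $i$ of an incident face $f$ (in either background geometry) and the number of such faces equals the combinatorial degree $d_i$, and using $\theta_i^f\in(0,\pi)$, I get $2\pi-\pi d_i<K_i<2\pi$ for every $i$, \emph{independently of the metric}. Hence with $\deg\mathcal{T}:=\sup_i d_i<\infty$ one has $|K_i|\le\max\{2\pi,\pi\deg\mathcal{T}-2\pi\}$, which is precisely the uniform bound of Proposition~\ref{prop:degree}. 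Because this bound involves only the combinatorics and not $r$, along \emph{any} solution $r(t)$ the curvatures stay uniformly bounded on all of $V\times[0,\infty)$.

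Feeding this back into Theorem~\ref{wellposed-Euc} finishes the argument: given two global solutions with the same initial data, each has uniformly bounded curvature by the previous step, hence each coincides with the unique bounded-curvature solution, so the two agree. I do not expect a genuine obstacle here, since all the analytic work sits in Theorem~\ref{wellposed-Euc}; the only thing to verify is the implication ``bounded degree $\Rightarrow$ uniformly bounded curvature, uniformly in $t$,'' and this is a purely combinatorial consequence of the angle identity rather than an estimate along the flow. The one subtlety worth double-checking is that the curvature bound is uniform in time, which holds precisely because it is metric-independent.
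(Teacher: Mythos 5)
Your proposal is correct and follows essentially the same route the paper intends: existence is inherited from Theorem~\ref{wellposed-Euc} (i.e.\ Theorem~\ref{longtime_eu}), and uniqueness follows because Proposition~\ref{prop:degree} gives a metric-independent bound $\sup_i|K_i(u)|\le N\pi$, so \emph{any} solution automatically satisfies the bounded-curvature hypothesis of Theorem~\ref{uniqueness}. Your observation that the curvature bound is purely combinatorial, hence uniform in $t$ along every solution, is precisely the point the paper makes when deducing Theorem~\ref{thm:bdd}.
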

For applications, the aim is to study the convergence of the Ricci flow and to find the limit metric with zero curvature. This is in general a hard problem for noncompact manifolds. In the continuous case, several convergence results were proved for Ricci flows on $\mathbb{R}^2$; see e.g. \cite{wu1993ricci, MR1371208, MR2538937, MR3049633}. For other open surfaces, the convergence of Ricci flows was established when assuming that the surface with the initial metric is nonparabolic; see \cite{MR2520032}. For further  results, we refer to \cite{MR2545867, MR3156988}. In our setting, we prove the convergence of the CRF in the hyperbolic background geometry if the initial metric has non-positive curvature.

\begin{thm}[Convergence of the hyperbolic CRF]\label{converge_hyp}

    Let $\mathcal{T}=(V,E,F)$ be an infinite disk triangulation with intersection angle $\Phi\in [0,\frac{\pi}{2}]^E.$
    Let $r(0)$ be a circle-packing metric in hyperbolic background geometry with non-positive discrete Gaussian curvatures. Then there exists a solution $r(t)$ to the flow \eqref{hyperbolic_flow} with the initial value $r(0)$, which converges to a circle-packing metric with zero discrete Gaussian curvatures.

\end{thm}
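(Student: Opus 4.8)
The plan is to pass to the coordinate $u_i=\log\tanh(r_i/2)$, in which the hyperbolic flow \eqref{hyperbolic_flow} linearizes nicely. Since $\frac{du_i}{dr_i}=\frac{1}{\sinh r_i}$, the flow becomes simply $\ddt{u_i}=-K_i$. Differentiating the curvature and using the symmetry and sign structure of the Jacobian $L_{ij}:=\partial K_i/\partial u_j$ (recalled in Section~\ref{sec:2}: $L_{ii}>0$, $L_{ij}<0$ for $j\sim i$, and crucially $m_i:=\sum_j L_{ij}=L_{ii}-\sum_{j\sim i}|L_{ij}|>0$ in hyperbolic background because sending all radii to infinity makes every triangle ideal and pushes $K_i$ up toward $2\pi$), one obtains
\begin{equation}
\ddt{K_i}=(\Delta K)_i-m_iK_i,\qquad (\Delta f)_i:=\sum_{j\sim i}a_{ij}(f_j-f_i),\ a_{ij}=|L_{ij}|>0,\ m_i>0.
\end{equation}
Thus $v_i:=-K_i$ satisfies the same equation $\ddt{v_i}=(\Delta v)_i-m_iv_i$ with \emph{positive absorption} $m_i>0$; this positive zeroth-order term is the distinctly hyperbolic feature that will drive convergence (and is absent in the Euclidean case, where the row sums of $L$ vanish).

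First I would establish preservation of non-positive curvature: if $v_i(0)=-K_i(0)\geq 0$ for all $i$, then $v_i(t)\geq 0$ for all $t$, by the parabolic maximum principle for the absorption equation above (the constant $0$ is a subsolution, and the term $-m_iv_i$ only helps). Granting this, $\ddt{u_i}=-K_i=v_i\geq 0$, so $u_i(t)$ is non-decreasing; since $u_i<0$ always, the $u_i$ are monotone and bounded above by $0$, hence converge to some $u_i^\infty\in[u_i(0),0]$, and correspondingly $r_i(t)\nearrow r_i^\infty$. A parallel maximum-principle argument applied at the infimum shows $\inf_iK_i(t)$ is non-decreasing, so $|K_i(t)|\leq|\inf_iK_i(0)|$ uniformly; this places the solution in the uniformly-bounded-curvature regime, so it is the unique such solution provided by Theorem~\ref{wellposed-Euc} (and, when the degree is bounded, the unique solution of Theorem~\ref{thm:bdd}).

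Next I would extract the convergence of the curvatures. Because $\int_0^\infty(-K_i)\,dt=u_i^\infty-u_i(0)<\infty$ for each $i$ and $v_i\geq 0$ solves a heat equation with strictly positive absorption, the absorption forces $v_i=-K_i\to 0$; thus the curvatures converge to $0$. Continuity of the discrete curvature in the radii then identifies $r^\infty=(r_i^\infty)$ as a circle-packing metric with zero discrete Gaussian curvature. To guarantee $r^\infty$ is a \emph{genuine} packing I would rule out $r_i^\infty=\infty$: if some radius diverged while the packing persisted, the triangles at that vertex would degenerate to ideal ones and $K_i$ would approach $2\pi$, contradicting $K_i\to 0$; hence every $u_i^\infty<0$ and every $r_i^\infty<\infty$. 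Non-degeneracy of the flow along the way is automatic, since for intersection angles in $[0,\tfrac{\pi}{2}]^E$ hyperbolic triangles exist for all positive radii.

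The main obstacle is making the two maximum-principle statements rigorous on an \emph{infinite} graph, where suprema and infima need not be attained and control at infinity is required. I would handle this by a finite exhaustion $\mathcal{T}_n\uparrow\mathcal{T}$: on each finite $\mathcal{T}_n$ run the flow with boundary radii frozen at $r(0)$, where the finite-dimensional theory (the Chow--Luo variational/gradient-flow argument, using the convexity coming from $L\succ 0$ and $m_i>0$) yields a solution whose interior curvatures decrease to $0$ and whose radii increase monotonically. A comparison principle for the absorption equation gives monotonicity of the approximants in the exhaustion parameter $n$ as well as in $t$, so the doubly monotone, uniformly bounded family $r^{(n)}(t)$ converges; I would then verify that the limit solves \eqref{hyperbolic_flow} on all of $\mathcal{T}$, preserves $K\le 0$, and inherits $K_i\to 0$ together with finite limiting radii. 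Carrying out the comparison estimates and the uniform bounds needed to pass to both limits, and confirming that curvature convergence survives the double limit, is the technical heart of the argument.
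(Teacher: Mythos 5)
Your proposal is correct and follows essentially the same route as the paper: pass to $u_i=\ln\tanh(r_i/2)$, preserve non-positivity of $K$ via the maximum principle on a finite exhaustion with frozen boundary data, deduce that each $u_i(t)$ is non-decreasing and bounded above by a negative constant (the near-ideal-triangle estimate of Proposition \ref{hyp_char}, which is also what rules out $r_i\to\infty$), and conclude $K_i\to 0$ from the finiteness of $\int_0^\infty(-K_i)\,\mathrm{d}t$ together with the convergence of $u$ on $B_1(i)$. Two inessential asides should be trimmed or qualified: the uniform bound $|K_i(t)|\le\max\{|\inf_i K_i(0)|,2\pi\}$ presumes $\inf_i K_i(0)>-\infty$, which is not assumed in the theorem (degrees may be unbounded), and the extra monotonicity in the exhaustion parameter $n$ is unnecessary, since the paper simply applies the sign preservation of Lemma \ref{max_principle} to each approximant $u^{[i]}$ and passes it pointwise to the subsequential limit solution already constructed in Theorem \ref{longtime_eu}.
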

\begin{rem}
 The existence of a zero-curvature metric under the assumption of non-positive curvature was previously established for finite triangulations of compact surfaces; see e.g. \cite{2003Combinatorial,MR4761889}, which is open for infinite triangulations. For an infinite disk triangulation, we prove the result using the parabolic method, specifically the CRF, while a proof via elliptic methods appears to be unknown. 
 \end{rem}
 
 As a direct application, we have the following corollary, which links the geometry of the circle-packing metric to the random walk on the $1-$skeleton of the corresponding triangulation.

\begin{cor} \label{negative}
    
    Let $\mathcal{T}=(V,E,F)$ be an infinite disk triangulation with intersection angle $\Phi\in [0,\frac{\pi}{2}]^E.$ If $\mathcal{T}$ attains a circle-packing metric with non-positive discrete Gaussian curvatures in hyperbolic background geometry, then $\mathcal{T}$ admits a circle-packing metric with zero discrete Gaussian curvatures, and its $1-$skeleton is VEL-hyperbolic.

\end{cor}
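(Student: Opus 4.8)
The plan is to obtain both conclusions from the convergence theorem, Theorem~\ref{converge_hyp}, combined with the classical equivalence, due to He and Schramm \cite{He_schramm}, between the circle-packing type of a disk triangulation and the vertex-extremal-length type of its $1$-skeleton.

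The first conclusion is immediate. Taking the assumed circle-packing metric with non-positive discrete Gaussian curvatures as the initial datum $r(0)$ and running the hyperbolic CRF \eqref{hyperbolic_flow}, Theorem~\ref{converge_hyp} produces a solution $r(t)$ converging to a circle-packing metric $r_\infty$ with $K_i\equiv 0$. Thus $\mathcal{T}$ admits a circle-packing metric with zero discrete Gaussian curvatures, which is the first assertion.

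For the VEL-hyperbolicity I would use the flat limit metric $r_\infty$ to realize $\mathcal{T}$ as a circle packing of the hyperbolic plane. Gluing the hyperbolic triangles determined by $r_\infty$ and the intersection angles $\Phi$ yields a hyperbolic cone metric on the underlying surface; since $K_i\equiv 0$, the cone angle at every vertex is $2\pi$, so the metric is smooth, and as $\mathcal{T}$ is a disk triangulation the resulting surface is simply connected. The developing map of this flat structure is then a local isometry into $\mathbb{H}^2$, and the associated circles give a circle packing of $\mathcal{T}$. The essential point is to promote this to a packing whose carrier is all of $\mathbb{H}^2$, i.e. a univalent packing filling the Poincar\'e disk $\mathbb{D}$; granting this, $\mathcal{T}$ is CP-hyperbolic, and by the He--Schramm dichotomy \cite{He_schramm} a disk triangulation is CP-hyperbolic if and only if its $1$-skeleton is VEL-hyperbolic, which finishes the proof.

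The step I expect to be the main obstacle is exactly this promotion: ruling out that the developing map of the flat structure is non-univalent or has image a proper subdomain of $\mathbb{H}^2$ (for instance, if the radii decayed so fast near the end that the induced metric were incomplete). I would control this using the single-end hypothesis---so that leaving every compact subset of $\mathcal{T}$ corresponds to combinatorial distance to a base vertex tending to infinity---together with the uniqueness and uniformization theory for circle packings of infinite disk triangulations, which identifies the flat hyperbolic packing with the complete maximal packing of $\mathbb{D}$. A more self-contained route would be to verify completeness of the metric induced by $r_\infty$ directly and then invoke the Cartan--Hadamard theorem to conclude that the simply connected complete surface of constant curvature $-1$ is isometric to $\mathbb{H}^2$, so that the carrier equals $\mathbb{D}$ and He--Schramm applies.
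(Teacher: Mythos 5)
Your proposal takes essentially the same route as the paper: the flat metric comes from Theorem~\ref{converge_hyp}, and the promotion you correctly identify as the crux is handled in the paper exactly by your primary suggestion, namely Stephenson's maximal-packing (discrete uniformization) theory \cite[Chapters 6 and 8]{Stephenson_intro} (generalized to intersection angles), which yields a locally finite packing with carrier the unit disc, after which He's Theorem~\ref{he} gives VEL-hyperbolicity. One caution about your ``more self-contained'' fallback: completeness of the metric induced by $r_\infty$ plus Cartan--Hadamard is not available for free, since the proof of Theorem~\ref{converge_hyp} only bounds the limit conformal factors from above ($u_j^*(\infty)\le\bar{\delta}_j<0$) and provides no lower bound on the radii, so incompleteness is not ruled out and one should rely on the Perron-type maximal-packing argument, as the paper does.
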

Here ``VEL'' refers to the \textbf{vertex extremal length}, which we will introduce in Section \ref{vel}. For a graph with bounded degree, it is VEL-parabolic (hyperbolic resp.) if and only if the simple random walk on it is recurrent (transient resp.), for which the bounded-degree condition cannot be dropped, see \cite[Theorem 8.2]{He_schramm}.

After that, we consider the case where the triangulation is the hexagonal triangulation $\mathcal{T}_H=(V_H,E_H,F_H)$ as shown in Figure \ref{regular}. There are known examples of locally univalent circle packings, e.g. the regular circle packing shown in Figure \ref{regular} and Doyle spirals, see \cite[Appendix C]{Stephenson_intro} and \cite{Doyle_spirals}.   
We prove the convergence result of the CRF provided that the initial metric is a small perturbation of the regular circle packing. 

\begin{thm}[Convergence of Euclidean CRF on $\mathcal{T}_H$]\label{mainthm} There is a universal constant $\epsilon_0>0,$ such that for any initial data $r(0)$ with  $\|\ln ({r(0)})\|_{l^2}\leq \epsilon_0,$  the solution $r(t)$ to the flow \eqref{unnormalized} converges to the regular circle-packing metric
\[
r_{reg}\equiv1
\]
as shown in Figure \ref{regular}. Moreover, the logarithmic of the flow $u(t)=\ln(r(t))$ satisfies $u\in C^1([0,\infty),l^2(H)),$ 
\begin{align*}
       \EE(u(t)):=\sum_{\{i,j\}\in E}|u(i,t)-u(j,t)|^2\rightarrow 0,\quad \mathrm{and}\quad \|u(t)\|_{l^\infty}\rightarrow 0,\quad t\to\infty.
    \end{align*}
\end{thm}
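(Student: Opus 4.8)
The plan is to work throughout in the logarithmic coordinate $u_i=\ln r_i$, in which the Euclidean flow \eqref{unnormalized} becomes the autonomous system $\ddt{u_i}=-K_i(u)$ on the Hilbert space $l^2=l^2(V_H)$, with the regular packing $r_{reg}\equiv 1$ corresponding to the stationary point $u\equiv 0$ where $K(0)=0$. The decisive structural input is the classical variational fact (recalled in Section~\ref{sec:2}) that the Jacobian $L(v):=DK(v)=\bigl(\partial K_i/\partial u_j\bigr)$ is a symmetric, positive semi-definite weighted graph Laplacian,
\[
\langle w,L(v)w\rangle=\sum_{\{i,j\}\in E}w_{ij}(v)\,(w_i-w_j)^2,\qquad w_{ij}(v)\geq 0 ,
\]
whose weights depend continuously, and in a translation-invariant way, on the finitely many coordinates of $v$ near the edge $\{i,j\}$. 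At $v=0$ all weights equal a common value $w_0>0$ by the symmetry of $\mathcal T_H$, so after shrinking $\epsilon_0$ I may assume $c_0\leq w_{ij}(v)\leq C_0$ uniformly whenever $\|v\|_{l^\infty}\leq\|v\|_{l^2}\leq\epsilon_0$. Since each $K_i$ is a smooth function of the finitely many nearby radii and the degree equals $6$, this uniform control of the weights makes $K\colon\{\|u\|_{l^2}\leq\epsilon_0\}\to l^2$ Lipschitz, and together with $K(0)=0$ and the fundamental theorem of calculus it yields monotonicity and coercivity,
\[
\langle u,K(u)\rangle=\int_0^1\langle u,L(su)u\rangle\,\mathrm ds\geq c_0\,\EE(u)\geq 0 .
\]

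With this in hand I would first settle well-posedness in $l^2$. The Lipschitz property makes $u\mapsto -K(u)$ a locally Lipschitz vector field on the ball $B_{\epsilon_0}\subset l^2$, so Picard--Lindel\"of in $l^2$ produces a unique local $C^1$ solution. The a priori estimate $\tfrac12\ddt{\|u\|_{l^2}^2}=-\langle u,K(u)\rangle\leq 0$, valid while the solution stays in $B_{\epsilon_0}$, shows $\|u(t)\|_{l^2}$ is non-increasing; a standard continuation/bootstrap argument then traps the solution inside $B_{\epsilon_0}$ for all time and upgrades it to a global solution $u\in C^1([0,\infty),l^2)$. Integrating the same inequality and using coercivity gives the quantitative dissipation bound
\[
\int_0^\infty\EE(u(t))\,\mathrm dt\leq\frac{1}{2c_0}\,\|u(0)\|_{l^2}^2<\infty .
\]

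To upgrade time-integrability to $\EE(u(t))\to 0$, I would bound the energy's derivative: from $\ddt{\EE(u)}=-2\sum_{\{i,j\}\in E}(u_i-u_j)(K_i-K_j)$ and Cauchy--Schwarz one gets $\bigl|\ddt{\EE(u)}\bigr|\leq 2\,\EE(u)^{1/2}\,\EE(K)^{1/2}$, and both factors are uniformly bounded on $B_{\epsilon_0}$ because the unweighted Laplacian is bounded on $l^2$ (bounded degree) and $\|K(u)\|_{l^2}\leq C\|u\|_{l^2}\leq C\epsilon_0$. Thus $\EE(u(t))$ is a nonnegative integrable function with bounded derivative, and therefore tends to $0$.

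The last and most delicate step is $\|u(t)\|_{l^\infty}\to 0$: the hexagonal lattice is amenable, so $0$ lies in the spectrum of the Laplacian and there is no spectral gap to drive exponential decay, which forces a compactness argument exploiting the symmetry of $\mathcal T_H$. Arguing by contradiction, suppose $|u(t_n)(x_n)|\geq\delta>0$ along some $t_n\to\infty$. Using vertex-transitivity I would translate each $u(t_n)$ by an automorphism carrying $x_n$ to a fixed base vertex $o$, obtaining $v_n$ with $\|v_n\|_{l^2}\leq\epsilon_0$, $\EE(v_n)=\EE(u(t_n))\to 0$, and $|v_n(o)|\geq\delta$. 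Passing to a weak-$l^2$ limit $v_n\rightharpoonup w$ (bounded sequences in $l^2$ are weakly precompact), weak convergence forces pointwise convergence, so $|w(o)|\geq\delta$, while weak lower semicontinuity of the Dirichlet energy gives $\EE(w)\leq\liminf_n\EE(v_n)=0$; hence $w$ is constant on the connected infinite graph $V_H$ and, being in $l^2$, must vanish, contradicting $|w(o)|\geq\delta$. Therefore $\|u(t)\|_{l^\infty}\to 0$, whence $r_i(t)=e^{u_i(t)}\to 1$ uniformly, i.e. convergence to $r_{reg}$. I expect this final compactness/amenability step to be the crux; the uniform positivity of the curvature weights on $B_{\epsilon_0}$, on which both coercivity and global existence rest, is the other point requiring care.
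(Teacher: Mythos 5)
Your argument is correct, but it takes a genuinely different route from the paper's. The paper linearizes the flow at $u\equiv 0$ and rewrites it as the semilinear equation $\ddt{u}=\Delta_{\omega_*}u+F(Du)$ with a constant-weight Laplacian and a quadratic nonlinearity ($F(0)=0$, $\nabla F(0)=0$, Proposition \ref{prop:Fest}); existence then comes from a Duhamel/contraction-mapping argument with the heat semigroup $P_t$ on $C([0,1];l^2(H))$ (Theorem \ref{cauchy_problem}), extended in time by the energy estimate of Lemma \ref{l2}, where smallness is used to absorb $\|F(Du)\|_{l^2}\le C_2\EE(u)$; the decay $\EE(u(t))\to 0$ follows from Lemma \ref{energy_etimate} together with Barbalat's lemma (Lemma \ref{Barbalat's Lemma}). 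You instead keep the flow in the form $\ddt{u}=-K(u)$ and exploit its monotone, gradient-flow structure: by Lemma \ref{vari} (zero row sums in the Euclidean case and symmetry $\pp{K_i}{u_j}=\pp{K_j}{u_i}$), $DK(v)$ is a symmetric positive semi-definite weighted Laplacian, with weights pinched in $[c_0,C_0]$ on a small ball by continuity and the translation invariance of $\mathcal{T}_H$; Picard--Lindel\"of in $l^2$ plus the coercive dissipation inequality $\frac12\ddt{\|u\|_{l^2}^2}\le -c_0\EE(u)$ then replaces the paper's semigroup machinery, and your Barbalat step is essentially the paper's with a slightly different bound on $\ddt{\EE(u)}$ (via $\EE(K)\le C\|u\|_{l^2}^2$ rather than via $\|D^2u\|_{l^2}$ as in \eqref{homo4}). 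Note also that positivity of the weights is global by \eqref{change}, so $\langle u,K(u)\rangle\ge 0$ and the trapping of the solution in the ball involve no circularity, and Lemma \ref{est_deri} even gives a global upper bound on the weights. What each approach buys: yours is semigroup-free, yields an explicit dissipation constant, and makes transparent why smallness is needed (uniform ellipticity of the weights); the paper's isolates the constant-coefficient linear part and runs parallel to the continuous semilinear theory. For the final step, the paper's Proposition \ref{infinity_converge} is an elementary propagation argument valid on any infinite connected graph with $\|u_n\|_{l^2}$ bounded, whereas your weak-compactness argument additionally uses vertex-transitivity of $\mathcal{T}_H$ to recenter, plus weak lower semicontinuity of the Dirichlet energy --- correct here, but less general. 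One caveat: your Picard uniqueness only identifies the solution within the class of $C^1$ curves in $l^2(H)$, while the theorem speaks of \emph{the} solution to \eqref{unnormalized}; to close this, argue as the paper does via Theorem \ref{thm:bdd}, which applies because $\deg\equiv 6$ on $\mathcal{T}_H$, so every solution has uniformly bounded curvature by Proposition \ref{prop:degree}.
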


\begin{figure}[htbp]
\centering
\includegraphics[scale=0.60]{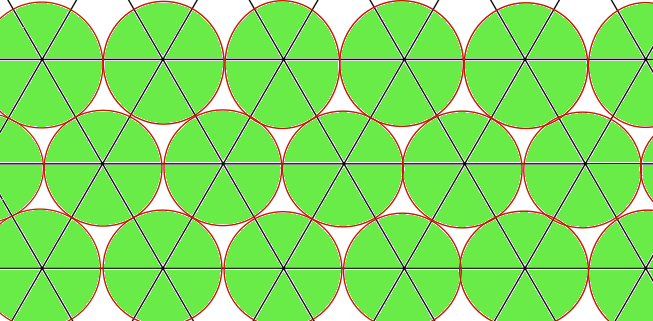}
\captionof{figure}{\small A part of a regular hexagonal packing.}
  \label{regular}
\end{figure} 
This result is based on our new observation of the special structure of the equation  \eqref{unnormalized} on the hexagonal triangulation and the energy estimates; see Section \ref{sec:5}.

From  Theorem \ref{converge_hyp} and Theorem \ref{mainthm}, we observe that under some assumptions on the initial data, the Euclidean CRF and hyperbolic CRF on infinite disk triangulations converge to circle-packing metrics with zero discrete Gaussian curvatures. Since for the cases of finite graphs the existence of circle-packing metrics with zero discrete Gaussian curvatures and the convergence of the CRF are equivalent, see e.g. \cite{2003Combinatorial,MR4334399}, we have the following conjecture of the uniformization of the CRF. 
\begin{conjecture}[Uniformization of the CRF]\label{conj}
    Let $\mathcal{T}$ be an infinite disk triangulation with $\Phi\in [0,\frac{\pi}{2}]^E$, then the following statements are equivalent.
\begin{enumerate}[(A)]
    \item The CRF on $\mathcal{T}$ converges for any initial value in hyperbolic background geometry.
    \item There exists a locally finite univalent circle packing of $\mathcal{T}$ in the unit disc.
    \item The $1-$skeleton of $\mathcal{T}$ is VEL-hyperbolic.
\end{enumerate}
\end{conjecture}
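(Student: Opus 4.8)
The plan is to establish the three implications (B)$\Leftrightarrow$(C), (A)$\Rightarrow$(B), and (C)$\Rightarrow$(A), of which only the last carries substantive new content; the first two are comparatively soft. The equivalence (B)$\Leftrightarrow$(C) is exactly the discrete uniformization principle: I would invoke the He--Schramm dichotomy \cite{He_schramm,HE}, which identifies the packing type of a disk triangulation with its VEL type, so that admitting a locally finite univalent packing in the unit disc is equivalent to VEL-hyperbolicity. The only adaptation needed is to run their combinatorial potential-theoretic arguments with a fixed intersection angle $\Phi\in[0,\frac{\pi}{2}]^E$ in place of the tangency case; the monotonicity and rigidity for intersection-angle packings required for this are available in \cite{zhou2023generalizing,MR4334399}.

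For (A)$\Rightarrow$(B), I would analyze the stationary points of \eqref{hyperbolic_flow}. Since $\sinh r_i>0$ for $r_i\in(0,\infty)$, a stationary point with finite positive radii forces $K_i=0$ for all $i$, i.e.\ it is a circle-packing metric with vanishing discrete Gaussian curvature. Such a hyperbolic metric has cone angle $2\pi$ at every vertex, hence develops to a locally finite univalent packing in $\mathbb{H}^2$, that is, in the unit disc. Thus under (A) one picks any initial value, and the limit of the flow is such a packing, giving (B). Here I read ``converges'' as convergence to a circle-packing metric whose radii are bounded away from $0$ and $\infty$, consistent with the conclusion of Theorem \ref{converge_hyp}.

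The heart of the matter is (C)$\Rightarrow$(A). Given VEL-hyperbolicity, the implication (C)$\Rightarrow$(B) already in hand supplies a zero-curvature packing metric $r^\ast$, which I take as both the target and a stationary point of \eqref{hyperbolic_flow}. Passing to the hyperbolic conformal variable $u_i=\ln\tanh\frac{r_i}{2}$ turns \eqref{hyperbolic_flow} into the negative gradient flow of a convex combinatorial Ricci potential $\mathcal{F}$ whose unique critical point is $u^\ast=\ln\tanh\frac{r^\ast}{2}$. The strategy is then: (i) use the comparison/maximum principle for the resulting parabolic system to produce, for arbitrary initial data $r(0)$, ordered sub- and super-solutions obtained by perturbing $r^\ast$, trapping $r(t)$ between two flows that are monotone in the spirit of Theorem \ref{converge_hyp} and both converge to a zero-curvature metric; (ii) extract from these barriers uniform two-sided bounds $0<c\le r_i(t)\le C<\infty$, so that $K_i(t)$ stays uniformly bounded on $V\times[0,\infty)$ and the uniqueness of Theorem \ref{thm:bdd} (via Theorem \ref{wellposed-Euc}) applies; (iii) exploit VEL-hyperbolicity as the discrete counterpart of nonparabolicity to force $\EE(u(t)-u^\ast)\to 0$, which combined with the bounds upgrades to $u(t)\to u^\ast$ and hence to convergence of $r(t)$.

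The hard part will be making steps (i)--(iii) work for \emph{arbitrary} initial data. Theorem \ref{converge_hyp} only treats non-positively curved initial metrics, where the flow is monotone and the maximum principle delivers the a priori bounds immediately; for general $r(0)$ monotonicity is lost, and $r(0)$ need not be pointwise comparable to $r^\ast$, so the barriers must be built globally rather than from a single ordering, and controlling them on an infinite triangulation without compactness is delicate. The decisive role of VEL-hyperbolicity should be to rule out escape of energy toward the end --- the discrete analog of the nonparabolic convergence mechanism for Ricci flow on open surfaces \cite{MR2520032} --- and I expect the genuine difficulty to lie in making this quantitative, for instance through a VEL-based capacity or Poincar\'e-type inequality that drives $\EE(u(t)-u^\ast)\to 0$. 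It is precisely this global convergence for all initial data, rather than the mere existence of the limit packing, that keeps the statement at the level of a conjecture.
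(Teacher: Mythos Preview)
The statement you are attempting is a \emph{conjecture} in the paper, not a theorem; the paper does not prove it. Section~6 records exactly the same soft implications you give: (B)$\Leftrightarrow$(C) is He's theorem (Theorem~\ref{he}, which already covers intersection angles $\Phi\in[0,\frac{\pi}{2}]^E$, so no further adaptation via \cite{zhou2023generalizing,MR4334399} is needed), and (A)$\Rightarrow$(B) is the observation that a limit of the hyperbolic CRF is a zero-curvature packing metric and hence develops into a packing of the disc. The paper then states plainly that ``one only need to verify that (C) implies (A)'' and leaves this open. Your proposal is consistent with this picture and you yourself concede in the last sentence that (C)$\Rightarrow$(A) remains conjectural; so what you have written is a research outline, not a proof.

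On the substantive part (C)$\Rightarrow$(A), there is a concrete obstruction to your step (i)--(iii) strategy that the paper flags explicitly. You invoke ``the negative gradient flow of a convex combinatorial Ricci potential $\mathcal{F}$ whose unique critical point is $u^\ast$,'' but the paper states in Section~\ref{sec:1} that ``in our setting of infinite triangulations, no well-defined functional is known to serve this role.'' The Chow--Luo potential is a sum over faces and does not converge on an infinite complex, so the variational structure you rely on is unavailable; this is why the paper proves Theorem~\ref{converge_hyp} by maximum principles and monotonicity rather than by energy descent. Likewise, your barrier construction in step (i) presupposes a comparison principle between two solutions on the infinite triangulation, but the uniqueness/comparison results of the paper (Theorem~\ref{uniqueness}, Lemma~\ref{mp2}) require uniformly bounded curvature along the flow, which is exactly what you are trying to establish; for general initial data this is circular. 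Finally, the mechanism you hope for in step (iii)---a VEL-based Poincar\'e or capacity inequality forcing $\EE(u(t)-u^\ast)\to 0$---is not known in this discrete setting; the only convergence results the paper can prove (Theorems~\ref{converge_hyp} and~\ref{mainthm}) bypass such an inequality by imposing sign or smallness conditions on the initial data. In short, each of your three steps rests on a tool that is either known to be missing or not yet constructed, which is why the paper records the statement as Conjecture~\ref{conj}.
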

We expect the CRF serve as a new tool to study  circle packing problems for infinite triangulations.

\subsection{Organization of the paper and proof strategies}

Firstly, we prove the long-time existence of the flow \eqref{infinite_flow} in the spirit of the work of Shi \cite{Shi_noncompact}. We choose an exhaustive sequence of simply connected finite triangulations $\{\mathcal{T}_j\}_{j=1}^\infty$ such that $\cup_j\mathcal{T}_j=\mathcal{T}$ and $\mathcal{T}_j\subset\mathcal{T}_{j+1}$ for $j\geq 1.$ For each $\mathcal{T}_j,$ we construct a solution  $u^{[j]}(t)$ of the CRF with proper Dirichlet boundary condition. After that, we establish a maximum principle for the CRF on $\mathcal{T}_j$, inspired by Chow and Luo \cite{2003Combinatorial}, obtain some a priori estimates, and prove that the flow $u^{[j]}(t)$ has a convergent subsequence, which will converge to a solution $u\in C^{\infty}(V\times[0,\infty);\mathbb{R})$ with any initial value $r(0)$. The uniform boundedness of the discrete Gaussian curvature obtained in Proposition~\ref{prop:degree} plays an essential role in the proof.

Secondly, we prove the uniqueness of the flow \eqref{infinite_flow} given that the discrete Gaussian curvature $K$ is uniformly bounded on $V\times [0,T]$. We observe that the difference of two solutions to the flow \eqref{infinite_flow} can be written as a heat equation on the triangulation $\mathcal{T}$ with a time-changing edge weight $\omega(t)\in\mathbb{R}_+^E$ satisfying, for any $i\in V,$ \begin{align}\label{esti_weight}
    \sum_{j\in V:j\sim i}\omega_{ij}(t)\le C,\quad \forall t\leq T,
\end{align}
where  $C=C(\|K\|_{L^\infty(V\times [0,T])},T).$ The uniform boundedness follows from a key estimate of angle derivatives in Lemma~\ref{est_deri}. 
Then the uniqueness follows from a maximum principle, Lemma~\ref{mp2}, for parabolic operators with bounded time-changing weights, inspired by Wu \cite{wu1993ricci}.

Moreover, in Section \ref{sec:4}, we prove the convergence of the flow \eqref{infinite_flow} in hyperbolic background geometry. Recall that Chow and Luo's convergence result for the CRF on a finite triangulation of a compact surface \cite{2003Combinatorial} is derived from the variational structure of the flow, as the CRF represents the negative gradient flow of a convex functional. This approach has been widely adopted in subsequent works on flow methods; see, e.g. \cite{luo2004combinatorial,MR3807319,MR3825607,MR4024520,ge2021combinatorial,MR4334399,MR4466650}. However, in our setting of infinite triangulations, no well-defined functional is known to serve this role. Instead, we establish the convergence result using the maximum principle for curvature and the intrinsic geometric properties of the hyperbolic setting. In particular, we prove that the non-positivity of the Gaussian curvature is preserved by the CRF, which yields the monotonicity of the solution to the flow. This guarantees the existence of the limit metric with vanishing Gaussian curvature. After that, we prove some existence results of singular circle-packing metrics with prescribed discrete Gaussian curvatures with the flow approach.

Finally, in proving Theorem \ref{mainthm}, we introduce the analysis on the hexagonal triangulation $H$  in Section \ref{sec:5}, and observe that the flow \eqref{infinite_flow} can be reformulated as a semilinear parabolic equation on $H,$ for any $v\in H,t\geq 0,$ 
\begin{align*}
    \ddt{u_v}-\Delta_{\omega_*} u_v=F(Du)(v),
\end{align*}
where $\Delta_{\omega_*}$ is a constant discrete Laplacian operator defined in Section \ref{sec:5}, and $F:\mathbb{R}^6\rightarrow\mathbb{R}$ is an analytic function satisfying $F(0)=0$ and $\nabla F(0)=0$. Then the result follows from the analysis of asymptotic behaviors of this equation via energy estimates in Section \ref{sec:5}.

\section{Preliminaries}\label{sec:2}

\subsection{Circle-packing metrics and discrete conformal factors}

Let $\Sigma$ be a surface, and $\mathcal{T}=(V,E,F)$ be a triangulation of $\Sigma,$ where $V,E$ and $F$ are the sets of vertices, edges, and faces respectively. In this article, we always assume that the triangulation $\mathcal{T}$ is locally finite.
For simplicity, we use $\{i\}_{i=1}^{|V|}$ to denote the vertex set $V$, where $|V|$ is the cardinality of the set $V.$  $\mathcal{T}$ is called an infinite triangulation if $|V|=\infty.$ We denote by $i\sim j$ that two vertices $i$ and $j$ are connected by an edge in $E$.
A positive function $r\in \mathbb{R}_+^V$ defined on the vertex set $V$ is called a \textbf{circle-packing metric}. A function $\Phi\in [0,\frac{\pi}{2}]^E$ is called the intersection angle.  Now fixing a pair $(\mathcal{T},\Phi),$ we assign to each edge $e=\{i,j\}\in E$  the length 
\begin{align}\label{edgelength}
l_{ij}=\sqrt{r_i^2+r_j^2+2\cos{\Phi_{ij}}r_ir_j}
\end{align}
in Euclidean background geometry and 
\begin{align}\label{edgelength_hyp}
    l_{ij}=\cosh^{-1}({\cosh{r_i}\cosh{r_j}+\cos{\Phi_{ij}}\sinh{r_i}\sinh{r_j}})
\end{align}
in hyperbolic background geometry.
A face with vertices $i,~j$ and $k$ is shown in Figure \ref{cp_metric}. Gluing all triangles along common edges, one can obtain a polyhedral surface given by $(\mathcal{T},\Phi)$ and $r.$
\begin{figure}[htbp]
\centering
\includegraphics[scale=0.50]{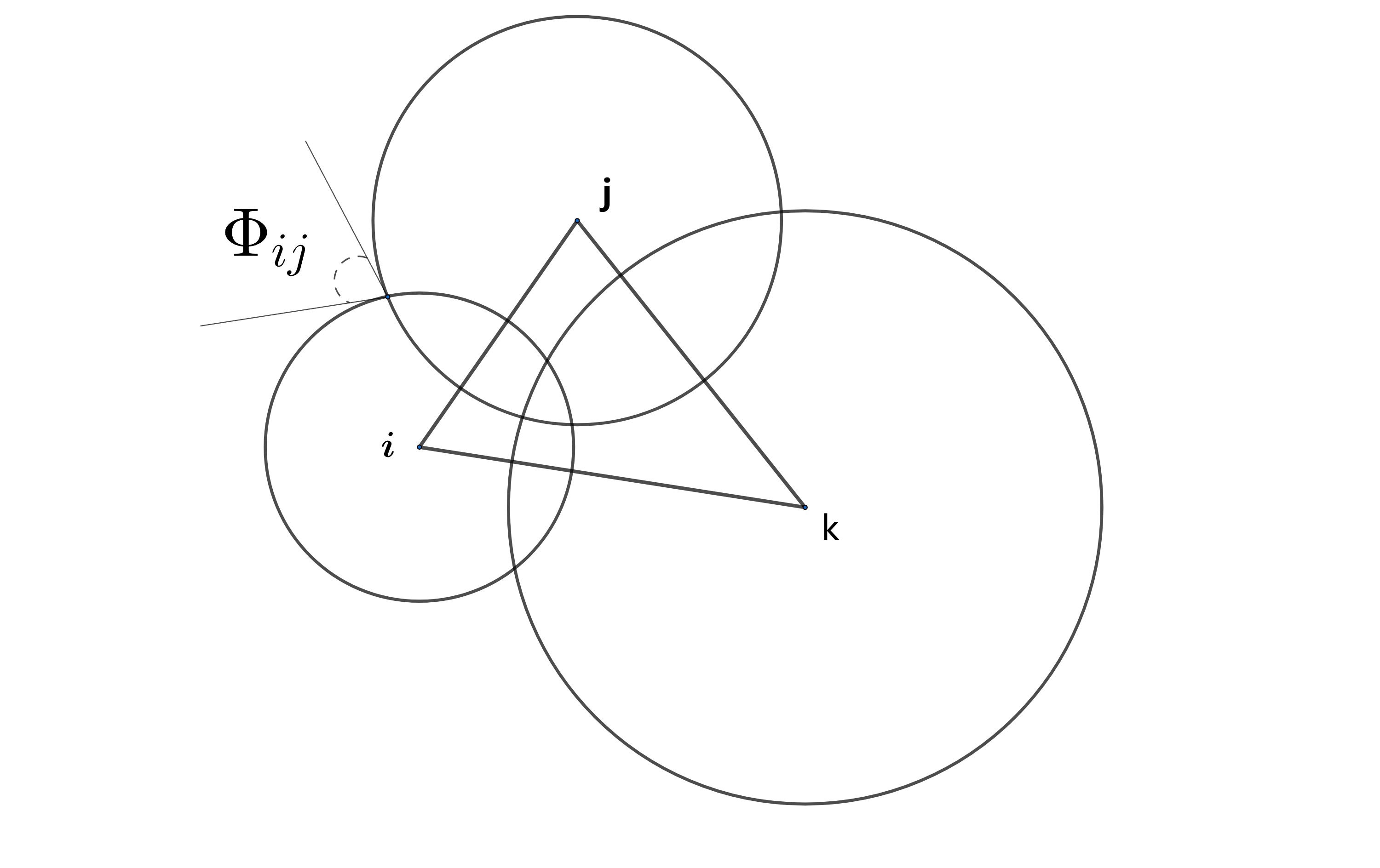}
\captionof{figure}{\small The circle-packing metric in Euclidean background geometry on a triangle.}
  \label{cp_metric}
\end{figure} 

Given a pair $(\mathcal{T},\Phi)$, all inner angles of triangles in $\mathcal{T}$ are determined by the circle-packing metric $r.$ The \textbf{discrete Gaussian curvature} $K_i$ at the vertex $i$ is defined as 
\[
K_i=2\pi-\sum_{\{i,j,k\}\in F}\theta_i^{jk},
\]
where $\theta_i^{jk}$ is the inner angle at the vertex $i$ of the face $\{i,j,k\}.$
By change of variables, let $u_i=\ln r_i~(\ln{\tanh{\frac{r_i}{2}}}$ resp.) in Euclidean (hyperbolic resp.) background geometry, flows \eqref{unnormalized} and \eqref{hyperbolic_flow} can be written in a uniform way as below 
\begin{align}
&\ddt{u_i}=-K_i,~~\forall~i\in V.\label{infinite_flow}
\end{align}
The new variable $u=(u_i)_{i\in V}\in\mathbb{R}^V((-\infty,0)^V~\text{resp.})$ is called the \textbf{discrete conformal factor} of the circle-packing metric $r$ in Euclidean (hyperbolic resp.) background geometry in the literature; see \cite{gu2008computational}. 

\subsection{Variational principles}
Now we introduce the variational principle for circle packings in Euclidean and hyperbolic background geometries, which combined the work of Thurston \cite{thurston1980geometry} and Colin de Verdi\`ere \cite{MR1106755}.
\begin{lem}\label{vari}
    Let $f=\{1,2,3\}$ be a triangle with a circle-packing metric in Euclidean or hyperbolic background geometry. We assume $\Phi\in[0,\frac{\pi}{2}]^3.$  Let $(r_i)_{i=1,2,3}$ and $(u_i)_{i=1,2,3}=(\ln r_i)_{i=1,2,3}$($\ln\tanh{\frac{r_i}{2}}$ resp.) be radii and the discrete conformal factors of the circle-packing metric in Euclidean (hyperbolic resp.) background geometry. Moreover, let $\theta_1^{23},\theta_2^{31}$ and $\theta_3^{12}$ be inner angles of the triangle $f$ at the vertices $1,2$ and $3$, respectively. The following statements hold.
    \begin{enumerate}[I)]
    \item  $\pp{\theta_i^{jk}}{u_j}>0$,~  $\forall i\neq j,$ and $\pp{\theta_i^{jk}}{u_i}<0,~\forall i=1,2,3.$
        \item  $  \pp{\theta_i^{jk}}{u_j}=\pp{\theta_j^{ik}}{u_i},  ~~\forall\{i,j,k\}=\{1,2,3\}. $
        \item $\pp{\theta_i^{jk}}{u_1}+\pp{\theta_i^{jk}}{u_2}+\pp{\theta_i^{jk}}{u_3}=0(<0~ \text{resp.}) ~\text{in Euclidean (hyperbolic resp.) background geometry},
        \forall i\in\{1,2,3\}.$
    \end{enumerate}
\end{lem}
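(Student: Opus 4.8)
The plan is to reduce everything to the single triangle $f=\{1,2,3\}$ supplied in the statement and to compute the full Jacobian $\bigl(\tfrac{\partial\theta_i^{jk}}{\partial u_m}\bigr)$ explicitly, after which all three assertions can be read off. Writing $\theta_i$ for $\theta_i^{jk}$, I would first obtain the derivatives of an angle with respect to the edge lengths by implicitly differentiating the law of cosines: in the Euclidean case $l_{23}^2=l_{12}^2+l_{31}^2-2l_{12}l_{31}\cos\theta_1$ (and cyclically), and in the hyperbolic case $\cosh l_{23}=\cosh l_{12}\cosh l_{31}-\sinh l_{12}\sinh l_{31}\cos\theta_1$. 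Simplifying with the projection (second cosine) relations $l_{12}=l_{23}\cos\theta_2+l_{31}\cos\theta_1$ and its hyperbolic analogue, this yields expressions in which the $3\times3$ matrix $\bigl(\tfrac{\partial\theta_i}{\partial l_{jk}}\bigr)$ factors as $\tfrac{1}{2\mathcal A}\,D_l\,S$, with $D_l$ diagonal, $S$ symmetric, and $\mathcal A$ the (Euclidean or hyperbolic) area. Next I would differentiate the metric formulas \eqref{edgelength} and \eqref{edgelength_hyp} to obtain $\tfrac{\partial l_{ij}}{\partial u_m}$, which vanishes unless $m\in\{i,j\}$ and is otherwise explicit, e.g. $\tfrac{\partial l_{ij}}{\partial u_i}=\tfrac{r_i(r_i+r_j\cos\Phi_{ij})}{l_{ij}}$ in the Euclidean background. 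Composing by the chain rule gives the full Jacobian.

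For part I), I would feed these expressions into the chain rule and check signs directly. The hypothesis $\Phi\in[0,\tfrac\pi2]^E$ enters precisely here through $\cos\Phi_{ij}\ge0$, which forces $\tfrac{\partial l_{ij}}{\partial u_i}>0$ and pins down the sign of the mixed terms; combined with the law-of-cosines relations this gives $\tfrac{\partial\theta_i}{\partial u_i}<0$ and $\tfrac{\partial\theta_i}{\partial u_j}>0$ for $i\ne j$.

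For part III) in the Euclidean background I would avoid computation altogether and use scale invariance: the shift $u\mapsto u+c\mathbf{1}$ multiplies every radius, hence by \eqref{edgelength} every edge length, by the common factor $e^{c}$, producing a similar triangle with identical angles, so differentiating $\theta_i(u+c\mathbf{1})=\theta_i(u)$ at $c=0$ yields $\sum_m\tfrac{\partial\theta_i}{\partial u_m}=0$. This argument breaks down in the hyperbolic background, which carries an intrinsic length scale, so there the strict inequality must be extracted from the explicit row sum $\sum_m\tfrac{\partial\theta_i}{\partial u_m}$, whose residual hyperbolic correction terms I would show to be strictly negative; this is consistent with Gauss--Bonnet, $\sum_i\theta_i=\pi-\mathcal A$, under which enlarging the triangle decreases the total angle.

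The hard part will be part II), the symmetry $\tfrac{\partial\theta_i^{jk}}{\partial u_j}=\tfrac{\partial\theta_j^{ik}}{\partial u_i}$, which is the integrability condition underlying the entire variational theory and therefore cannot be obtained from soft arguments without circularity. After the chain rule both sides become explicit rational expressions in the $r_m$, the factors $\cos\Phi_{lm}$ and the angles, and I would establish the required identity by eliminating the angles through the law of cosines together with the projection formulas above (and their hyperbolic counterparts). The real work lies in the bookkeeping---running the Euclidean and hyperbolic cases in parallel and correctly tracking the intersection-angle terms $\cos\Phi_{ij}$---rather than in any single clever step. Once II) is secured, the Jacobian is symmetric with negative diagonal, positive off-diagonal entries, and vanishing (respectively negative) row sums, which is exactly the Laplacian-type structure needed in the later sections.
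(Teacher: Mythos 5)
Your proposal is correct and amounts to exactly what the paper does: its entire proof of Lemma \ref{vari} is the one-line remark that the statements follow ``via direct calculation,'' and your chain-rule computation through the edge lengths \eqref{edgelength}, \eqref{edgelength_hyp}, with the scaling argument handling the Euclidean case of III), is a faithful execution of that calculation. The only remark worth adding is that the bookkeeping you anticipate for II) can be bypassed by the geometric identity the paper recalls in its appendix from Chow--Luo, namely $\pp{\theta_i^{jk}}{u_j}=\frac{\tanh l_{OD}}{\sinh l_{ij}}$ with $O$ the power center and $D$ the foot of the perpendicular on the edge $\{i,j\}$, an expression manifestly symmetric in $i$ and $j$ (and positive, since $\Phi\in[0,\frac{\pi}{2}]^3$ places $O$ inside the triangle), which gives II) and the off-diagonal part of I) at once.
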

The proof of Lemma \ref{vari} is straightforward via direct calculation. By Lemma \ref{vari}, for two adjacent vertices $i$ an $j$ in a triangulation $\mathcal{T}$ with a circle-packing metric,
\begin{align}\label{change}
\pp{K_i}{u_j}=\pp{K_j}{u_i}<0,\forall i\sim j, ~\text{and} \ \pp{K_i}{u_i}>0.
\end{align}
Moreover, 
$$ \pp{K_i}{u_i}=-\sum_{j\sim i}\pp{K_i}{u_j}-\lambda B_i,
 $$ where $B_i=B_i(u)$ is a positive function, and $\lambda=0$ or $-1$ corresponds the Euclidean or hyperbolic setting. The next proposition follows directly from the definition.

 \begin{prop}\label{prop:degree}
    For a triangulation $\mathcal{T}=(V,E,F)$ with a circle-packing metric, for any $i\in V,$ $$2\pi-\deg(i)\pi\leq K_i(u)<2\pi.$$ For a triangulation $\mathcal{T}=(V,E,F)$ with $\sup_{i\in V}\deg(i)\leq N<\infty,$ for any circle-packing metric, 
    $$\sup_{i\in V}|K_i(u)|\leq N\pi.$$
 \end{prop}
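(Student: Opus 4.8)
The plan is to reduce the entire statement to the elementary fact that every inner angle of a nondegenerate triangle, in either the Euclidean or the hyperbolic plane, lies strictly in the open interval $(0,\pi)$. First I would recall the definition $K_i=2\pi-\sum_{\{i,j,k\}\in F}\theta_i^{jk}$, where the sum runs over all faces incident to the vertex $i$. Since $\mathcal{T}$ is a triangulation of the boundaryless open disk, every vertex is interior and its link is a cycle, so the number of triangles surrounding $i$ equals $\deg(i)$; thus the sum defining $K_i$ contains exactly $\deg(i)$ terms, each a genuine triangle angle produced by the edge lengths \eqref{edgelength} or \eqref{edgelength_hyp}.

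For the two-sided bound on $K_i$, I would estimate the angle sum from both directions. Each term satisfies $0<\theta_i^{jk}<\pi$, so summing over the $\deg(i)$ incident faces gives $0<\sum_{\{i,j,k\}\in F}\theta_i^{jk}<\deg(i)\,\pi$. Subtracting from $2\pi$ immediately yields
\begin{align*}
2\pi-\deg(i)\,\pi<K_i<2\pi.
\end{align*}
The lower bound in the Proposition is stated non-strictly, so it follows a fortiori. This establishes the first claim for every circle-packing metric, using nothing about the specific edge-length formulas beyond the fact that they produce honest triangles.

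For the uniform estimate under $\sup_{i\in V}\deg(i)\le N$, I would simply insert $\deg(i)\le N$ into the bound above to obtain $2\pi-N\pi\le K_i<2\pi$ for every $i\in V$. It then remains to check that both endpoints are dominated by $N\pi$ in absolute value: the left endpoint satisfies $2\pi-N\pi\ge -N\pi$ trivially, while $K_i<2\pi\le N\pi$ because any vertex of a triangulation has degree at least $3$, so in particular $N\ge 2$. Combining these gives $-N\pi\le K_i\le N\pi$, i.e.\ $\sup_{i\in V}|K_i(u)|\le N\pi$, as desired.

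There is essentially no genuine obstacle here: the only points requiring a moment of care are the bookkeeping that the number of angles contributing to $K_i$ is exactly $\deg(i)$ (which relies on the disk triangulation having no boundary vertices), and the trivial but necessary observation that $N\ge 2$ is what licenses the passage from $K_i<2\pi$ to $K_i\le N\pi$. Everything else is a one-line consequence of the triangle-angle inequality $\theta\in(0,\pi)$, so I expect the proof to be short and purely combinatorial, independent of the background geometry.
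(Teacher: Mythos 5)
Your proof is correct and is essentially the argument the paper intends: the paper gives no proof beyond the remark that the proposition ``follows directly from the definition,'' and your direct computation --- $\deg(i)$ incident faces, each contributing an angle in $(0,\pi)$, plus the observation that $\deg(i)\geq 3$ gives $2\pi\leq N\pi$ --- is exactly that direct verification. Nothing is missing.
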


For detailed estimates of partial derivatives $\{\pp{\theta_i^{jk}}{u_i}\}_{\{i,j,k\}=\{1,2,3\}}$, we have the following result, which extends the estimate of He in \cite{HE}.
\begin{lem}[Estimate of the partial derivatives]\label{est_deri}
    Let the face $f$ and the partial derivaties $\{\pp{\theta_i^{jk}}{u_j}\}_{\{i,j,k\}=\{1,2,3\}}$ be defined as above. Then
    \begin{align*}
        \pp{\theta_i}{u_j}\le C\theta_i,~~\forall i\sim j,
    \end{align*}
for some uniform constant $C$ in both Euclidean and hyperbolic background geometries.
\end{lem}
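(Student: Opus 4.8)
The plan is to convert this transcendental inequality into an algebraic one. First, since $\sin\theta\le\theta$ for all $\theta\in[0,\pi]$, it suffices to prove the stronger-looking bound $\pp{\theta_i}{u_j}\le C\sin\theta_i$ for all $i\sim j$. The point of this reduction is that $\sin\theta_i$ is an \emph{algebraic} function of the radii and the intersection angles, whereas $\theta_i$ itself is not. Writing the (Euclidean) law of cosines $\cos\theta_i=\frac{l_{ij}^2+l_{ik}^2-l_{jk}^2}{2l_{ij}l_{ik}}$ and differentiating yields the identity $\pp{\theta_i}{u_j}=-\frac{1}{\sin\theta_i}\pp{\cos\theta_i}{u_j}$, so the target becomes $-\pp{\cos\theta_i}{u_j}\le C\sin^2\theta_i$. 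Using $\sin\theta_i=\frac{A}{l_{ij}l_{ik}}$, where $A$ is twice the area of the face $f$, and $\sin^2\theta_i=1-\cos^2\theta_i$, both sides are now explicit rational functions of $r_1,r_2,r_3$ and $\cos\Phi_{12},\cos\Phi_{13},\cos\Phi_{23}$.

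Next I would differentiate explicitly. Taking $i=1,j=2,k=3$ and using $\pp{l_{12}^2}{u_2}=2r_2(r_2+\cos\Phi_{12}r_1)$ and $\pp{l_{23}^2}{u_2}=2r_2(r_2+\cos\Phi_{23}r_3)$, one obtains the closed form
$$-\pp{\cos\theta_1}{u_2}=\frac{r_2(\cos\Phi_{23}r_3-\cos\Phi_{12}r_1)}{l_{12}l_{13}}+\cos\theta_1\,\frac{r_2(r_2+\cos\Phi_{12}r_1)}{l_{12}^2},$$
and the goal is to bound this by $C\,\sin^2\theta_1=C(1-\cos^2\theta_1)$. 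In the Euclidean setting the whole expression is invariant under the common scaling $r\mapsto\lambda r$, so after normalizing (say $r_1=1$) the relevant configuration space is $\{(r_2,r_3)\in\mathbb{R}_+^2\}\times[0,\tfrac{\pi}{2}]^3$. On any compact part of this space the ratio $\pp{\theta_1}{u_2}/\sin\theta_1$ is continuous, hence bounded, so the real content is to control it along the degenerate strata, i.e. where the face collapses and $\sin\theta_1\to0$.

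These degeneracies are exactly $\theta_1\to0$ and $\theta_1\to\pi$, and one checks they occur only along finitely many strata of the compactified configuration space (for instance $r_1\to\infty$, or $r_1\to0$ together with $\Phi_{23}\to0$, and their symmetric analogues). In each stratum I would expand to leading order and match vanishing rates: e.g. as $r_1\to0$ one finds $1+\cos\theta_1\sim(\cos\Phi_{12}+\cos\Phi_{13})\frac{r_2+r_3}{r_2r_3}\,r_1$, so that $\sin^2\theta_1\asymp r_1$, while the closed-form numerator $-\pp{\cos\theta_1}{u_2}$ vanishes to first order in $r_1$; the two orders agree and the ratio stays bounded. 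Here the hypothesis $\Phi\in[0,\tfrac{\pi}{2}]^E$, i.e. $\cos\Phi\ge0$, is essential: it fixes the sign of the leading coefficients and prevents the numerator from dominating $\sin^2\theta_1$. This is the precise point at which He's tangential estimate ($\Phi\equiv0$) is genuinely extended.

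For the hyperbolic background geometry I would run the same scheme with the hyperbolic law of cosines $\cos\theta_i=\frac{\cosh l_{ij}\cosh l_{ik}-\cosh l_{jk}}{\sinh l_{ij}\sinh l_{ik}}$ and the edge lengths \eqref{edgelength_hyp}. The identity $\pp{\theta_i}{u_j}=-\frac{1}{\sin\theta_i}\pp{\cos\theta_i}{u_j}$ again reduces everything to the algebraic bound $-\pp{\cos\theta_i}{u_j}\le C\sin^2\theta_i$; the strict negativity in part III) of Lemma~\ref{vari} reflects the extra positive term $B_i$ and only helps. I expect the main obstacle to be the uniformity of $C$ across all faces and all admissible $\Phi$: because the hyperbolic problem is not scale invariant, the configuration space keeps an extra dimension, and one must additionally treat the ideal-triangle limit $r_1,r_2,r_3\to\infty$, where all angles tend to $0$ and the matching of vanishing orders must again be verified. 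Granting these boundary expansions, a compactified-configuration-space argument produces a single universal constant $C$, settling both backgrounds simultaneously.
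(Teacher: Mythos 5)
Your reduction is sound, and the computations you do show are correct: passing from $\pp{\theta_i}{u_j}\le C\theta_i$ to the algebraic target $-\pp{\cos\theta_i}{u_j}\le C\sin^2\theta_i$ is legitimate, your closed form for $-\pp{\cos\theta_1}{u_2}$ is right, and your sample expansion at the stratum $r_1\to0$, $\Phi_{23}\to0$ checks out (the two $O(1)$ terms cancel and the ratio tends to $\tfrac{r_3}{2(r_2+r_3)}$). The genuine gap is the compactness step. Asserting that degeneracies "occur only along finitely many strata" and that one may "expand to leading order and match vanishing rates" does not produce a uniform constant: the parameter space has corners where several degenerations occur at incomparable rates (e.g.\ $r_1/r_2\to0$ and $r_2/r_3\to0$ simultaneously; in the hyperbolic case, where no scaling normalization exists, radii may mix $0$ and $\infty$), and the vanishing orders jump between substrata. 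In your own stratum one has $\sin^2\theta_1\asymp(\cos\Phi_{12}+\cos\Phi_{13})\,r_1$, which degenerates to order $r_1^2$ when $\Phi_{12}=\Phi_{13}=\tfrac{\pi}{2}$; a leading-order expansion at a boundary point controls the ratio along sequences converging to that point, not on a neighborhood of it, so uniformity as $\cos\Phi_{12}+\cos\Phi_{13}\to0$ (and at every other corner) requires genuine two-sided estimates or a blow-up compactification on which the ratio extends continuously. None of this is carried out, and for the hyperbolic background --- which is where the actual content lies --- the plan is only stated ("I would run the same scheme", "granting these boundary expansions"), including the ideal limit $r_1,r_2,r_3\to\infty$. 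As written, the hardest part, namely exactly the uniformity the lemma asserts, is deferred. There is also an internal inconsistency: after normalizing $r_1=1$ you still list $r_1\to0,\infty$ as strata, so the compactified space you intend is never pinned down.

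For comparison, the paper sidesteps all of this. In the Euclidean case it simply cites He \cite{HE}, whose Lemma 3.2 already treats intersection angles $\Phi\in[0,\tfrac{\pi}{2}]$ --- so your claim that extending He's "tangential" estimate to general $\Phi$ is the novel point is off; the new content of the appendix is the hyperbolic case. There the paper uses the Chow--Luo identity $\pp{\theta_i^{jk}}{u_j}=\tanh(l_{OD})/\sinh(l_{ij})$, where $O$ is the power center and $D$ the foot of the perpendicular on the edge $\{i,j\}$. Hyperbolic right-triangle trigonometry gives $\tanh l_{OD}=\tan(\angle Oij)\sinh l_{iD}$, hence $\pp{\theta_i^{jk}}{u_j}\le\tan\angle Oij\le\tan\theta_i^{jk}\le C_1\theta_i^{jk}$ for $\theta_i^{jk}\le1$; and since $\Phi\in[0,\tfrac{\pi}{2}]^E$ places $O$ in the convex hull of the circles at $i$ and $j$, one gets $l_{OD}\le\max\{r_i,r_j\}\le l_{ij}$ and so $\pp{\theta_i^{jk}}{u_j}\le1\le\theta_i^{jk}$ in the complementary regime $\theta_i^{jk}\geq 1$. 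Both asymptotic regimes and all uniformity questions are settled in two lines, with the hypothesis $\cos\Phi\ge0$ entering through the location of the power center --- the same role you correctly identified for it. If you wish to salvage your computational route, imitate this structure rather than a single stratified expansion: prove a small-angle bound and a separate absolute bound $\pp{\theta_i}{u_j}\le C$ valid when $\theta_i$ is bounded below, which decouples the degenerations and removes the need for a global compactification argument.
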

We will leave the proof of this lemma in Appendix \ref{appendix}.

\subsection{Vertex extremal length}\label{vel}
The vertex extremal length is a discrete analog of the extremal length of curve families in Riemann surfaces, which was used for studying square tilings by Schramm \cite{MR1244661}.

Let $G=(V,E)$ be a connected infinite graph, and $A\subset V$ be a finite subset of $V$. We denote by $\Gamma(A,\infty)$ the family of infinite paths starting from the vertex set $A$, which cannot be contained in any finite subset of $V$. Let $m:V\rightarrow[0,\infty)$ be a function on $V$. For an infinite path $\gamma=v_0v_1\cdots v_n\cdots$, the integral of $m$ over $\gamma$ is defined by 
\[
\int_\gamma \mathrm{d}m=\sum_{i=1}^\infty m(v_i).
\]

We call $m$ a $\Gamma(A,\infty)$-admissible function if 
\[
\int_\gamma\mathrm{d}m\ge 1,~\forall \gamma\in \Gamma(A,\infty).
\]
The vertex extremal length of $\Gamma(A,\infty)$ is defined as 
$$\mathrm{VEL}(A,\infty)=\max\left\{\frac{1}{\|m\|^2_{l^2}}:m:V\rightarrow\infty~\text{is }~\Gamma(A,\infty)\text{-admissble}\right\}.$$
\begin{defn}
    An infinite graph $G=(V,E)$ is called VEL-parabolic if  there exists a finite vertex set $A$ such that 
    \[
    \mathrm{VEL}(A,\infty)=\infty.
    \]
    Otherwise, $G$ is called VEL-hyperbolic.
\end{defn}

Finally, we introduce some basic definitions in graph theory:

\textbullet $d(i,j)$: the combinatorial distance from $i$ to $j$, given by
\[
d(i,j)=\inf\{d\in\mathbb{N}:\text{there exists a path $i_0i_1...i_d$ connecting}~i ~\text{and}~j~\text{with}~i_0=i,i_d=j\}.
\]

\textbullet $B_n(i)$: the ball on the graph with radius $n$ centered at the vertex $i$, that is $$B_n(i)=\{j\in V:d(i,j)
\le n\},$$

\textbullet $\deg(i)$: the number of edges that are adjacent to the vertex $i.$

\section{Well-posedness of CRF on infinite disk triangulations}\label{sec:3}

In this section, we study the well-posedness of the CRF on infinite disk triangulations.

\subsection{The long-time existence of the flow \eqref{infinite_flow}}
For the CRF on a finite triangulation, the long-time existence is guaranteed by the classical ODE theory and the maximum principle obtained by Chow and Luo \cite{2003Combinatorial}. In order to prove the long-time existence result for the CRF on infinite triangulations, we employ approximation techniques. We first establish the long-time existence of the solution to the CRFs in both Euclidean and hyperbolic background geometry. The uniqueness of the flow is contained in next subsection.

\begin{thm}\label{longtime_eu}
 For circle-packing metrics in Euclidean (hyperbolic resp.) background geometry, let $\mathcal{T}=(V, E, F)$ be an infinite disk triangulation, and let $\Phi\in [0,\frac{\pi}{2}]^E$. Then for any initial data $r(0)\in \R_+^V,$ there exists a global solution $r\in C^{\infty}_t(V\times[0,\infty))$ with $u(t)\in \R_+^V$ for any $t\geq 0$ to the flow \eqref{unnormalized} (\eqref{hyperbolic_flow} resp.).
\end{thm}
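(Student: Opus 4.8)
The plan is to prove long-time existence by an exhaustion-and-compactness argument, following the strategy indicated in the introduction. First I would fix an increasing exhaustion $\mathcal{T}_1\subset\mathcal{T}_2\subset\cdots$ of $\mathcal{T}$ by finite, simply connected subtriangulations with $\bigcup_j\mathcal{T}_j=\mathcal{T}$; such an exhaustion exists since $\mathcal{T}$ is a locally finite disk triangulation. Writing $V_j^\circ$ for the interior vertices of $\mathcal{T}_j$ (those whose entire star lies in $\mathcal{T}_j$) and $\partial V_j$ for the remaining boundary vertices, I would solve on each $\mathcal{T}_j$ the Dirichlet problem: impose $\ddt{u_i^{[j]}}=-K_i$ for $i\in V_j^\circ$ while freezing $u_i^{[j]}(t)\equiv u_i(0)$ for $i\in\partial V_j$, with initial data inherited from $r(0)$. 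Since each $\mathcal{T}_j$ is finite, the right-hand side is a smooth function of the interior conformal factors (the edge-length formulas \eqref{edgelength}, \eqref{edgelength_hyp} always produce nondegenerate triangles when $\Phi\in[0,\frac{\pi}{2}]$), so Picard--Lindel\"of yields a unique smooth short-time solution, and the maximum principle for curvature of Chow--Luo \cite{2003Combinatorial} together with classical ODE continuation gives a global solution $u^{[j]}\in C^\infty([0,\infty))$ for each $j$.

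The decisive step is a uniform-in-$j$ a priori estimate. For a fixed vertex $i$, once $j$ is large enough that $i\in V_j^\circ$, the star of $i$ in $\mathcal{T}_j$ coincides with its star in $\mathcal{T}$, so $\deg_{\mathcal{T}_j}(i)=\deg(i)$ and Proposition~\ref{prop:degree} gives
\begin{align*}
|K_i^{[j]}(t)|\le \deg(i)\,\pi,\qquad \forall\,t\ge 0,
\end{align*}
independently of $j$. Integrating $\ddt{u_i^{[j]}}=-K_i^{[j]}$ then yields, for every $T>0$,
\begin{align*}
|u_i^{[j]}(t)-u_i(0)|\le \deg(i)\,\pi\,t\le \deg(i)\,\pi\,T,\qquad t\in[0,T],
\end{align*}
which bounds $u_i^{[j]}$ uniformly in $j$ on $[0,T]$ and shows that $t\mapsto u_i^{[j]}(t)$ is uniformly Lipschitz, hence equicontinuous. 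This is precisely where the curvature bound of Proposition~\ref{prop:degree} is essential, as it survives the limit $j\to\infty$ without any global assumption on the triangulation.

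With these estimates I would run a diagonal extraction: enumerate $V=\{i_1,i_2,\dots\}$, and by Arzel\`a--Ascoli pass to a subsequence along which $u_{i_1}^{[j]}$ converges locally uniformly on $[0,\infty)$, then a further subsequence for $u_{i_2}^{[j]}$, and so on; the diagonal subsequence $u^{[j_k]}$ converges, vertexwise and locally uniformly in $t$, to a limit $u(t)$. Because each $K_i$ is a smooth function of the finitely many conformal factors in the star of $i$, the local uniform convergence of those factors forces $K_i^{[j_k]}(t)\to K_i(u(t))$ locally uniformly, so $\ddt{u_i^{[j_k]}}\to -K_i(u)$ locally uniformly; hence $u_i\in C^1$ with $\ddt{u_i}=-K_i(u)$, and a standard bootstrap upgrades this to $u\in C^\infty(V\times[0,\infty))$ solving \eqref{infinite_flow} with the prescribed initial value.

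The point requiring the most care is confirming that the limit stays in the admissible range, which is automatic in the Euclidean setting ($u\in\mathbb{R}^V$, with triangles never degenerating) but delicate in the hyperbolic setting, where we must keep $u_i<0$, i.e.\ the radii finite. The lower end is harmless: since $K_i<2\pi$ gives $\ddt{u_i}>-2\pi$, no $u_i$ can reach $-\infty$ in finite time, so radii stay positive. For the upper end the geometry supplies a one-sided barrier: at a vertex realizing $\max_i u_i^{[j]}$, all neighbouring radii are no larger, and as its radius tends to infinity the hyperbolic angles $\theta_i^{jk}\to 0$ (whether the neighbours remain bounded or also grow, the area bound forces small angles), so $K_i\to 2\pi^-$ and $\ddt{u_i}$ becomes strictly negative and bounded away from $0$. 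Making this barrier quantitative and uniform in $j$---thereby ensuring both the global existence of the finite-piece hyperbolic flows and the admissibility of the limit---is the main obstacle; the Euclidean case is comparatively straightforward.
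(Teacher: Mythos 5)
Your Euclidean argument is sound and is essentially the paper's own proof: the same exhaustion by finite simply connected subcomplexes, the same Dirichlet problem \eqref{finite_flow}, the same degree-based curvature bound (the paper uses $|K_j|\le(2+\deg(j))\pi$, you invoke Proposition~\ref{prop:degree}; both suffice), and the same Arzel\`a--Ascoli plus diagonal extraction. (The paper gets $C^1$-convergence of the approximants via a $C^2$ bound coming from the curvature evolution \eqref{eq:curveq}, whereas you recover the ODE in the limit from uniform $C^0$-convergence of $u^{[j]}$ together with convergence of the right-hand sides; both routes are valid.)

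The genuine gap is exactly the point you flag and then leave open: the quantitative, $j$-uniform upper barrier in the hyperbolic case. Without it your proof is incomplete in two places at once: the finite-piece hyperbolic flows need not exist globally (the curvature bound prevents $u_i\to-\infty$ in finite time, but a bounded derivative does nothing to stop $u_i$ from reaching the finite boundary value $0$, i.e.\ $r_i\to\infty$, in finite time, since $K$ stays bounded as $u_i\to 0^-$), and the limit could fail to remain in $(-\infty,0)^V$. Your framing at a vertex realizing $\max_i u_i^{[j]}$ is also a detour: that vertex moves with $t$ and with $j$, so uniformity would be awkward, and the paper's argument is purely local at each fixed vertex. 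The paper closes the gap with Proposition~\ref{hyp_char} (Chow--Luo's Lemma 3.5), whose crucial feature is that the threshold $\delta$ depends only on $\epsilon$ and \emph{not} on the neighbours' conformal factors --- precisely the uniformity your parenthetical remark about unbounded neighbours gestures at but does not prove. Concretely: fix $j$, choose $\delta_j<0$ so that $\theta_j^{kl}<\frac{2\pi}{\deg(j)}$ whenever $u_j\ge\delta_j$, and set $\bar\delta_j:=\frac12\max\{u_j(0),\delta_j\}<0$ as in \eqref{hyperconstant}. If $u_j^{[i]}$ ever exceeded $\bar\delta_j$, then at the first crossing time $t_0>0$ one would have $\ddt{u_j^{[i]}}(t_0)\ge0$, yet since $u_j^{[i]}(t_0)=\bar\delta_j>\delta_j$ the angle bound gives
\begin{align*}
\ddt{u_j^{[i]}}(t_0)=-K_j=\sum_{\{j,k,l\}\in F}\theta_j^{kl}-2\pi<\deg(j)\cdot\frac{2\pi}{\deg(j)}-2\pi=0,
\end{align*}
a contradiction. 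Since $\bar\delta_j$ is independent of the exhaustion index, this single estimate simultaneously yields the global existence of the finite hyperbolic flows and the admissibility $u_j^*(t)\le\bar\delta_j<0$ of the limit, completing the step you correctly identified as the main obstacle.
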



Let $\mathcal{T}=(V,E,F)$ be a locally finite infinite triangulation of $\mathbb{R}^2$. Let $\mathcal{T}_i$ be the simply connected subcomplexes of $\mathcal{T}$ consisting of triangles whose vertices are contained in $B_i(1)$. Then they satisfy
\[
\mathcal{T}_i\subset\mathcal{T}_{i+1},\ \forall i\geq 1,~\quad\cup_i\mathcal{T}_i=\mathcal{T}.
\]
We denote by $V_i,~E_i$, and $F_i$ the sets of vertices, edges, and faces of $\mathcal{T}_i$. Set $\partial V_i$ and $\mathrm{int}(V_i)$ the boundary vertices and inner vertices of $V_i$. Recall that for Euclidean (hyperbolic resp.) background geometry, we always use $u_i=\ln r_i~(\ln{\tanh{\frac{r_i}{2}}}$ resp.) and require that $u\in \R^V$ ($u\in (-\infty,0)^V$ resp.) Let $u(0)$ be a discrete conformal factor. We consider the CRF on $\mathcal{T}_i$ as follows
\begin{eqnarray}\label{finite_flow}
   \left\{
   \begin{aligned}
   &\ddt{u_j^{[i]}(t)}=-K_j,~\forall~j\in\mathrm{int}(V_i),~\forall t> 0. \\
   &u^{[i]}_j(t)=u_j(0),~\forall (j,t)\in (V_i\times\{0\})\cup(\partial V_i\times(0,\infty)).
   \end{aligned}
   \right.
   \end{eqnarray}
Since $V_i$ is finite, the local existence of the above equations follows from Picard's theorem. The long time existence of those flows \eqref{finite_flow} is similar to that of compact surfaces, see \cite{2003Combinatorial}. 
First, let us introduce the maximum principle for the flow $u^{[i]}(t)$.

\begin{lem}[Maximum principle I]\label{max_principle}Let $u^{[i]}(t)=\ln{r^{[i]}}(t)$ ($u^{[i]}(t)=\ln\tanh{\frac{r^{[i]}(t)}{2}}$ resp.) be a solution to the equation \eqref{finite_flow} in Euclidean (hyperbolic resp.) background geometry. Let $K_j(u^{[i]}(t))$ be the discrete Gaussian curvature of the vertex $j\in\mathrm{int}(V_i).$ Set $m(t)=\min_{j\in \mathrm{int}(V_i)}K_j(u^{[i]}(t))$ and $M(t)=\max_{j\in \mathrm{int}(V_i)}K_j(u^{[i]}(t))$.
    We have $m_*(t):=\min(m(t),0)$ is non-decreasing, and $M_*(t)=\max(M(t),0)$ is non-increasing with respect to $t$.\end{lem}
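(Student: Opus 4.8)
The plan is to prove a maximum principle for the curvature $K_j$ along the finite flow \eqref{finite_flow}, using the sign structure of the curvature derivatives recorded in \eqref{change} together with a standard ODE comparison argument. First I would establish the evolution equation for the curvatures. Since $\frac{\mathrm{d}u_j}{\mathrm{d}t}=-K_j$ for inner vertices $j\in\mathrm{int}(V_i)$, and $u_j$ is held fixed on $\partial V_i$, I differentiate $K_j$ in time by the chain rule:
\begin{align*}
\ddt{K_j}=\sum_{k\in V_i}\pp{K_j}{u_k}\ddt{u_k}=-\sum_{k\in\mathrm{int}(V_i)}\pp{K_j}{u_k}K_k,
\end{align*}
where the sum is restricted to inner vertices because $\frac{\mathrm{d}u_k}{\mathrm{d}t}=0$ on the boundary. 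Splitting off the diagonal term and using \eqref{change}, this becomes $\ddt{K_j}=-\pp{K_j}{u_j}K_j-\sum_{k\sim j,\,k\in\mathrm{int}(V_i)}\pp{K_j}{u_k}K_k$, in which all the off-diagonal coefficients $\pp{K_j}{u_k}$ are negative and the diagonal coefficient $\pp{K_j}{u_j}$ is positive.

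Next I would run the comparison argument for $M_*(t)=\max(M(t),0)$; the argument for $m_*(t)$ is symmetric. I want to show $M_*$ is non-increasing. The natural approach is to work at a vertex $j^*=j^*(t)$ realizing the maximum $M(t)=\max_{j}K_j$. If $M(t)\le 0$ there is nothing to prove since $M_*\equiv 0$ there, so suppose $M(t)>0$, whence $K_{j^*}>0$ and $K_{j^*}\ge K_k$ for every inner vertex $k$. At such a maximizing vertex, using the evolution equation above and the identity $\pp{K_{j^*}}{u_{j^*}}=-\sum_{k\sim j^*}\pp{K_{j^*}}{u_k}-\lambda B_{j^*}$ from the excerpt, I would substitute to get
\begin{align*}
\ddt{K_{j^*}}=\sum_{k\sim j^*}\pp{K_{j^*}}{u_k}\bigl(K_{j^*}-K_k\bigr)+\lambda B_{j^*}K_{j^*}.
\end{align*}
Here each coefficient $\pp{K_{j^*}}{u_k}<0$ while $K_{j^*}-K_k\ge 0$, so every term in the sum is non-positive; and since $\lambda\le 0$, $B_{j^*}>0$, and $K_{j^*}>0$, the term $\lambda B_{j^*}K_{j^*}\le 0$ as well. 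Hence $\ddt{K_{j^*}}\le 0$ whenever $M(t)>0$. One must be careful that sums over $k\sim j^*$ should be read as ranging over inner neighbors only, since boundary conformal factors are frozen; because boundary curvatures are not even tracked, restricting to inner vertices in the definition of $M(t)$ is consistent, but I would check that a frozen boundary neighbor only improves the sign (its term is simply absent, which does not spoil non-positivity).

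The remaining technical point, and the step I expect to be the main obstacle, is converting the pointwise derivative bound at the maximizer into monotonicity of the possibly non-smooth function $M(t)$. The maximizing vertex $j^*$ can jump as $t$ varies, so $M(t)$ is only Lipschitz, not $C^1$, and $M_*(t)=\max(M(t),0)$ introduces a further corner at the level $0$. The clean way to handle this is via the upper Dini derivative: for a finite maximum of smooth functions one has $D^+M(t)\le\ddt{K_{j^*(t)}}$ at any $t$, so the computation above gives $D^+M(t)\le 0$ on the open set $\{M>0\}$, and a function with non-positive upper Dini derivative on an interval is non-increasing there. To promote this to $M_*$, I would note that on $\{M>0\}$ the two agree, on $\{M<0\}$ we have $M_*\equiv 0$ which is trivially non-increasing, and continuity of $M$ glues the pieces across the level set $\{M=0\}$; since $M_*$ is continuous and non-increasing on a dense relatively-open subset of each component, it is non-increasing throughout. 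An equivalent and perhaps more robust route is an $\epsilon$-perturbation or Hamilton-type scalar maximum principle argument comparing $M_*$ with the solution of the ODE $y'=0$, $y(t_0)=M_*(t_0)$; either way the sign computation is the heart of the matter and the Dini-derivative bookkeeping is routine.
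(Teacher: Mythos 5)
Your proposal is correct and takes essentially the same approach as the paper: the paper derives the identical curvature evolution equation \eqref{eq:curveq}, uses Lemma \ref{vari} and a proposition of Chow--Luo to rewrite it as $\ddt{f}(j,t)=\sum_{k\in\mathrm{int}(V_i):k\sim j}C_{jk}(f(k,t)-f(j,t))+B_jf(j,t)$ with $C_{jk}>0$ and $B_j\le 0$, and then invokes the standard maximum principle for linear parabolic equations on the finite graph $G_i$ --- precisely the sign structure you exploit, with your Dini-derivative argument simply spelling out the black box the paper cites. One minor bookkeeping note: since the identity $\pp{K_{j^*}}{u_{j^*}}=-\sum_{k\sim j^*}\pp{K_{j^*}}{u_k}-\lambda B_{j^*}$ sums over \emph{all} neighbors, boundary neighbors are not ``simply absent'' after substitution but leave the residual term $\sum_{k\sim j^*,\,k\in\partial V_i}\pp{K_{j^*}}{u_k}\,K_{j^*}$, which is non-positive at a positive maximum and non-negative at a negative minimum, so your conclusion stands (in the paper this contribution is absorbed into the non-positive coefficient $B_j$).
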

    \begin{rem}
    In \cite{2003Combinatorial}, Chow and Luo first proved the maximum principle of the CRF on compact surfaces without boundaries.
    \end{rem}

\begin{proof}
        Let $G_i$ be the induced subgraph on the set of vertices $\mathrm{int}(V_i),$ for which $i$ is fixed.  By the CRF equation, for any $j\in \mathrm{int}(V_i),$
        \begin{align}
    \ddt{K_j(u^{[i]}(t))}&=-\sum_{k\sim j,k \in V_i}\pp{K_j}{u_k}\ddt{u^{[i]}_k}-\pp{K_j}{u_j}\ddt{u^{[i]}_j}\nonumber
    \\&=-\sum_{k\sim j,k \in\mathrm{int}(V_i)}\pp{K_j}{u_k}K_k(u^{[i]}(t))-\pp{K_j}{u_j}K_j(u^{[i]}(t)).
            \label{eq:curveq}
\end{align}  Set $f(j,t):={K_j(u^{[i]}(t))}.$  Then by Lemma~\ref{vari} and \cite[Proposition~3.2]{2003Combinatorial}, we have
$$\ddt f(j,t)=\sum_{k\in \mathrm{int}(V_i): k\sim j} C_{jk}(f(k,t)-f(j,t))+B_jf(j,t),$$ where $C_{jk}=C_{kj}$ are positive functions, and $B_j$ is a non-positive function. Then the results follow from the well known maximum principle for linear parabolic equations on $G_i.$
        
    \end{proof}

Thanks to Lemma \ref{max_principle},  we have the following corollary.
\begin{cor}\label{uniform1}
    Let $u^{[i]}(t)$ be a solution to \eqref{finite_flow} in Euclidean or hyperbolic background geometry. Then
    \begin{align}
          |K_j(u^{[i]}(t))|\le \max\left\{\left|\min_{j\in \mathrm{int}(V_i)}K_j(u(0))\right|,2\pi\right\},\forall j\in  \mathrm{int}(V_i),~\forall t\ge 0.
    \label{uniform2}  
    \end{align}

\end{cor}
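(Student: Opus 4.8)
The plan is to derive the two-sided estimate by treating the upper and lower bounds separately: the upper bound is the universal curvature bound that holds for any circle-packing metric, while the lower bound is precisely where the monotonicity furnished by Lemma~\ref{max_principle} enters.

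For the upper bound I would simply invoke Proposition~\ref{prop:degree}, which asserts $K_j(u)<2\pi$ for any circle-packing metric and any vertex. Applying this to the metric $u^{[i]}(t)$ at each time yields $K_j(u^{[i]}(t))<2\pi\le\max\{|\min_{k\in\mathrm{int}(V_i)}K_k(u(0))|,\,2\pi\}$ for all $j\in\mathrm{int}(V_i)$ and all $t\ge0$, with no reference to the flow required. Alternatively, one could route the non-increasing quantity $M_*(t)$ from Lemma~\ref{max_principle} through its value at $t=0$, giving $M(t)\le M_*(0)=\max(\max_k K_k(u(0)),0)<2\pi$; either route produces the constant $2\pi$.

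For the lower bound I would use that $m_*(t)=\min(m(t),0)$ is non-decreasing in $t$ by Lemma~\ref{max_principle}. Fixing $j\in\mathrm{int}(V_i)$, the chain
\[
K_j(u^{[i]}(t)) \;\ge\; m(t) \;\ge\; m_*(t) \;\ge\; m_*(0) \;=\; \min\Big(\min_{k\in\mathrm{int}(V_i)}K_k(u(0)),\,0\Big) \;\ge\; -\Big|\min_{k\in\mathrm{int}(V_i)}K_k(u(0))\Big|
\]
holds, where the first inequality is the definition of $m(t)$, the second is the truncation $\min(\cdot,0)\le(\cdot)$, the third is the monotonicity of $m_*$, and the last is the elementary fact $\min(a,0)\ge-|a|$. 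Hence $K_j(u^{[i]}(t))\ge-\max\{|\min_k K_k(u(0))|,\,2\pi\}$.

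Combining the two bounds gives $|K_j(u^{[i]}(t))|\le\max\{|\min_{k\in\mathrm{int}(V_i)}K_k(u(0))|,\,2\pi\}$ as claimed. I expect no genuine obstacle here: the entire substance resides in Lemma~\ref{max_principle}, and the corollary is a matter of bookkeeping, the only mild care being the interaction of the truncation $\min(\cdot,0)$ with the sign of the initial curvature minimum (when $\min_k K_k(u(0))\ge0$ the lower bound is simply $0$, and when it is negative the truncation is transparent).
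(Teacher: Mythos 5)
Your proposal is correct and takes essentially the same route as the paper's own proof: the upper bound is the universal estimate $K_j(u^{[i]}(t))\le 2\pi$ coming straight from the definition of the discrete Gaussian curvature (as in Proposition~\ref{prop:degree}), and the lower bound is exactly the chain $K_j(u^{[i]}(t))\ge m_*(t)\ge m_*(0)=\min\{\min_{k\in\mathrm{int}(V_i)}K_k(u(0)),0\}$ furnished by the monotonicity of $m_*$ in Lemma~\ref{max_principle}. The only difference is cosmetic: you spell out the truncation bookkeeping $\min(a,0)\ge -|a|$ that the paper leaves implicit.
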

\begin{proof}
    By the definition of $K_j$, one easily sees that $K_j(u^{[i]}(t))\le 2\pi.$ Therefore, by Lemma \ref{max_principle}, we have 
    \[
    K_j(u^{[i]}(t))\ge\min\left\{\min_{j\in \mathrm{int}(V_i)}K_j(u(0)),0\right\},~\forall j\in \mathrm{int}(V_i),
    \]
This proves the result.
\end{proof}

We need the following proposition in \cite[Lemma 3.5]{2003Combinatorial} for the proof of the long-time existence of CRF in hyperbolic background geometry.
\begin{prop}\label{hyp_char}
    For a hyperbolic triangle $\{i,j,k\}$ with a circle-packing metric, let $u_i,u_j$ and $u_k$ be the corresponding discrete conformal factors. Then for any $\epsilon>0,$
    there exists a number $\delta<0$ such that if $\delta\le u_i<0,$ we have
    \[
    \theta_{i}^{ik}(u_i,u_j,u_k)\le \epsilon.
    \]
    \end{prop}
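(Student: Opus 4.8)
The plan is to reduce the statement to the geometric fact that, in hyperbolic background geometry, the inner angle at a vertex tends to $0$ as the radius at that vertex tends to $+\infty$, and to prove this by a direct estimate from the hyperbolic law of cosines. Since $u_i=\ln\tanh\frac{r_i}{2}$ is a strictly increasing function of $r_i$ with $u_i\to 0^-$ exactly when $r_i\to+\infty$, it suffices to show that $\theta_i^{jk}\to 0$ as $r_i\to+\infty$. Writing $l_{ab}$ for the edge lengths given by \eqref{edgelength_hyp}, the hyperbolic law of cosines gives
\[
\cos\theta_i^{jk}=\frac{\cosh l_{ij}\cosh l_{ik}-\cosh l_{jk}}{\sinh l_{ij}\sinh l_{ik}},
\]
and the identity $\cosh l_{ij}\cosh l_{ik}-\sinh l_{ij}\sinh l_{ik}=\cosh(l_{ij}-l_{ik})$ rewrites this as
\[
1-\cos\theta_i^{jk}=\frac{\cosh l_{jk}-\cosh(l_{ij}-l_{ik})}{\sinh l_{ij}\sinh l_{ik}}.
\]

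First I would bound the numerator from above and the denominator from below. Since $\cosh\ge 1$, the numerator satisfies $\cosh l_{jk}-\cosh(l_{ij}-l_{ik})\le \cosh l_{jk}$. For the denominator, the elementary inequality $\cosh l_{ij}=\cosh r_i\cosh r_j+\cos\Phi_{ij}\sinh r_i\sinh r_j\ge \sinh r_i\,(\cosh r_j+\cos\Phi_{ij}\sinh r_j)$, which follows from $\cosh r_i\ge\sinh r_i$, motivates setting $A_j:=\cosh r_j+\cos\Phi_{ij}\sinh r_j\ge 1$ and $A_k:=\cosh r_k+\cos\Phi_{ik}\sinh r_k\ge 1$. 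Then $\cosh l_{ij}\ge \sinh r_i\,A_j$, so $\sinh^2 l_{ij}\ge \sinh^2 r_i\,A_j^2-1\ge \tfrac12\sinh^2 r_i\,A_j^2$ once $r_i$ is large enough that $\sinh^2 r_i\ge 2$; the same holds for $l_{ik}$.

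The crucial point is to keep the factor $A_jA_k$ in both estimates so that it cancels. Indeed, $\cosh l_{jk}\le\cosh r_j\cosh r_k+\sinh r_j\sinh r_k=\cosh(r_j+r_k)\le 2\cosh r_j\cosh r_k\le 2A_jA_k$, while $\sinh l_{ij}\sinh l_{ik}\ge\tfrac12\sinh^2 r_i\,A_jA_k$. Combining these,
\[
0\le 1-\cos\theta_i^{jk}\le \frac{2A_jA_k}{\tfrac12\sinh^2 r_i\,A_jA_k}=\frac{4}{\sinh^2 r_i}
\]
for all $r_i$ with $\sinh^2 r_i\ge 2$, and this bound is uniform in $r_j,r_k$ and in $\Phi$. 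Hence $\cos\theta_i^{jk}\ge 1-4/\sinh^2 r_i\to 1$, so $\theta_i^{jk}=\arccos(\cos\theta_i^{jk})\to 0$ as $r_i\to+\infty$; translating back through $r_i=2\tanh^{-1}(e^{u_i})\to+\infty$ as $u_i\to 0^-$ then yields the desired $\delta$.

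I expect the only real obstacle to be this uniformity in $r_j,r_k$: the naive triangle inequality $l_{jk}\le l_{ij}+l_{ik}$ only gives $1-\cos\theta_i^{jk}\le 2$, and crudely bounding $\cosh l_{jk}$ by a constant loses all control once $r_j$ or $r_k$ is large, so the estimate must be organized around the matching quantities $A_j,A_k$ that cancel top and bottom. If instead $u_j,u_k$ are regarded as fixed (as the statement literally reads), one could alternatively invoke the monotonicity $\partial\theta_i^{jk}/\partial u_i<0$ from Lemma \ref{vari} to guarantee that $\lim_{u_i\to 0^-}\theta_i^{jk}$ exists, and then use only the leading asymptotics $\cosh l_{ij}\sim\tfrac12 e^{r_i}A_j$ to identify this limit as $0$. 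I would nevertheless prefer the uniform estimate above, since the independence of $\delta$ from the neighboring radii is exactly what is needed when this proposition is applied as a barrier keeping the flow \eqref{finite_flow} inside $(-\infty,0)^V$.
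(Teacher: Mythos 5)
Your proof is correct, and in fact it takes a different route from the paper by necessity: the paper does not prove Proposition \ref{hyp_char} at all, but imports it from Chow--Luo \cite[Lemma 3.5]{2003Combinatorial}. Your argument is a self-contained replacement, and the estimates check out: with $A_j=\cosh r_j+\cos\Phi_{ij}\sinh r_j\ge \cosh r_j\ge 1$, the bound $\cosh l_{jk}\le\cosh(r_j+r_k)\le 2\cosh r_j\cosh r_k\le 2A_jA_k$ uses only $\cos\Phi_{jk}\le 1$; the bound $\cosh l_{ij}\ge \sinh r_i\,A_j$ uses $\cosh r_i\ge\sinh r_i$ together with $\cos\Phi_{ij}\ge 0$ (this is exactly where the standing hypothesis $\Phi\in[0,\frac{\pi}{2}]^E$ enters); and $\sinh^2 l_{ij}\ge \sinh^2 r_i\,A_j^2-1\ge\frac12\sinh^2 r_i\,A_j^2$ is valid once $\sinh^2 r_i\ge 2$ because $A_j\ge 1$. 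The cancellation of $A_jA_k$ then gives $1-\cos\theta_i^{jk}\le 4/\sinh^2 r_i$ uniformly in $r_j,r_k$ and $\Phi$, and since $u_i=\ln\tanh\frac{r_i}{2}\to 0^-$ exactly when $r_i\to+\infty$, the desired $\delta$ exists. Compared with the bare citation, your route buys two things: an explicit rate (since $1-\cos\theta\ge 2\theta^2/\pi^2$ on $[0,\pi]$, one gets $\theta_i^{jk}\le \pi\sqrt{2}/\sinh r_i$), and, more importantly, explicit uniformity of $\delta$ in $u_j,u_k$ --- which is precisely what the application in \eqref{fact2}--\eqref{hyperconstant} needs, because along the flow the neighboring conformal factors vary with $t$ while $\delta_j$ may depend only on $u_j(0)$ and $\deg(j)$; the statement as printed leaves the quantifier on $u_j,u_k$ ambiguous (and contains the typo $\theta_i^{ik}$ for $\theta_i^{jk}$, which you silently and correctly fixed). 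Your fallback via the monotonicity $\pp{\theta_i^{jk}}{u_i}<0$ from Lemma \ref{vari} would only yield the pointwise version with $u_j,u_k$ fixed, so your preference for the uniform law-of-cosines estimate is the right call.
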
 
    Now we are ready to prove the long-time existence of the CRF.

\begin{proof}[\textbf{Proof of Theorem \ref{longtime_eu}.}]
We first prove the result for the Euclidean background geometry. 
 Fix a vertex $j\in V,$ and consider sufficiently large $i$ such that $j\in \mathrm{int}(V_i).$ Fix the time interval $[0,T]$ for $T>0.$ By the definition of $K_j,$
\[
|K_j(u^{[i]}(t))|\le(2+\deg(j))\pi,~\forall t\in[0,T].
\] Hence, by the CRF equation, $$|u^{[i]}_j(t)|\le |u_j(0)|+(2+\deg(j))\pi T.$$
By the CRF equation,  $$\sup_i\|u^{[i]}_j(t)\|_{C^1[0,T]}\leq C(u_j(0),\deg(j),T).$$
Moreover, by the evolution equation for the curvature \eqref{eq:curveq} and the uniform bound of $\sup_k\|u^{[i]}_j(t)\|_{C^1[0,T]}$ for $k\sim j,$
$$\sup_i\|u^{[i]}_j(t)\|_{C^2[0,T]}\leq C.$$ Then by the Arzel\`a-Ascoli theorem and the standard diagonal argument, there is a subsequence $\{u^{[i_l]}(t)\}_{l=1}^\infty$ of $\{u^{[i]}(t)\}_{i=1}^{\infty}$ such that for each vertex $j\in V$, $u^{[i_l]}_j(t)$  converges in $C^1[0,T]$ to some  $u^*_j(t)$ as $l\to \infty,$ which satisfies the equation \eqref{infinite_flow}. 

Since we obtain the solution $u^*$ to the time interval $[0,T]$ for any $T>0,$  we obtain a global solution of the flow \eqref{infinite_flow}. Moreover, by the smoothness of $K(u)$ with respect to $u$, the flow $u^*(t)$ is smooth with respect to $t$.  This proves the result for the Euclidean case.

Now we consider the case of hyperbolic background geometry.  In this case, the configuration space is given by $u\in (-\infty,0)^V.$ We only need to prove that the solution $u^*$ we constructed stays away from zero for any vertex, i.e. there is no vertex $j\in V$ and finite $T>0,$ such that $\lim_{t\to T}u_j^*(t)=0.$ 
 
 Fix a vertex $j\in V,$ for sufficiently large $i,$ we assume that $j\in \mathrm{int}(V_i).$  By Proposition \ref{hyp_char}, there is a constant $\delta_j<0$ such that if $u_j^{[i]}(t)\geq \delta_j$, then
   \begin{align}\label{fact2}
          \theta_j^{kl}(u^{[i]}(t))< \frac{2\pi}{\deg(j)},~~\text{whenever}~\{j,k,l\}~\text{is a face.}
   \end{align}
  We claim that \begin{align}\label{hyperconstant}\sup_{[0,T]}u^{[i]}_j(t)\leq \bar{\delta}_j, \quad \mathrm{with}\quad
     \bar{\delta}_j:=\frac{1}{2}\max\{u_j(0),\delta_j\}.
   \end{align}
  Suppose that this is not true, then $A:=\{t\in[0,T]:u^{[i]}_j(t)>\bar{\delta}_j\}\neq \emptyset.$ Let $t_0:=\inf A.$ Then $t_0>0.$ 
   Then by \eqref{fact2}, 
   $$0\leq \ddt{u^{[i]}_j(t_0)}=-K_j(u^{[i]}(t_0))=\sum_{\{j,k,l\}\in F}\theta_{j}^{kl}(u^{[i]}(t_0))-2\pi<0,$$
  which leads to a contradiction and proves the claim. The result for the hyperbolic case follows from the claim.
\end{proof}

By Theorem \ref{longtime_eu} and Corollary \ref{uniform1}, we have the following result.
\begin{prop}\label{bounded_initial}
    Let $u_0$ be a discrete conformal factor in Euclidean or hyperbolic background geometry such that the corresponding discrete Gaussian curvatures are uniformly bounded.
    Then there exists a global solution $u(t)$ of the flow \eqref{infinite_flow} with initial value $u_0$ such that the discrete Gaussian curvature $K(u(t))$ is uniformly bounded on $V\times[0,\infty).$
\end{prop}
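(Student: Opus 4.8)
The plan is to read this off directly from the exhaustion construction in the proof of Theorem~\ref{longtime_eu}, combined with the curvature estimate of Corollary~\ref{uniform1}; the only genuinely new input is the observation that the bound supplied by that corollary is \emph{uniform} in the exhaustion index once the initial curvature is controlled. First I would fix the constant $C_0 := \sup_{j\in V}|K_j(u_0)| < \infty$ furnished by the hypothesis, and recall the setup of Theorem~\ref{longtime_eu}: one chooses the exhaustion $\mathcal{T}_i\subset\mathcal{T}_{i+1}$, $\cup_i\mathcal{T}_i=\mathcal{T}$, runs the Dirichlet-type finite flow \eqref{finite_flow} with initial-and-boundary data $u_0$, and extracts a subsequence $u^{[i_l]}$ that converges, at every vertex and in $C^1[0,T]$ for every $T$, to a global solution $u^*$ of \eqref{infinite_flow}.

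The key point is that for an inner vertex $j\in\mathrm{int}(V_i)$ all neighbors of $j$ lie in $V_i$, so $K_j$ depends only on conformal factors inside $V_i$; in particular at $t=0$ we have $K_j(u^{[i]}(0))=K_j(u_0)$. Since every $K_j(u_0)\ge -C_0$ while $K_j(u_0)<2\pi$ always, this yields $|\min_{j\in\mathrm{int}(V_i)}K_j(u(0))|\le\max(C_0,2\pi)$ for \emph{every} $i$. Feeding this into Corollary~\ref{uniform1} gives $|K_j(u^{[i]}(t))|\le\max(C_0,2\pi)=:C$ for all $i$, all $j\in\mathrm{int}(V_i)$, and all $t\ge0$, a bound independent of both $i$ and $t$.

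To conclude, I would fix an arbitrary vertex $j$ and time $t$; for all sufficiently large $l$ we have $j\in\mathrm{int}(V_{i_l})$, hence $|K_j(u^{[i_l]}(t))|\le C$. Since $u^{[i_l]}$ converges to $u^*$ at $j$ and at each neighbor of $j$, and since each inner angle $\theta_j^{kl}$, and therefore $K_j$, is a continuous function of the finitely many conformal factors on which it depends (through \eqref{edgelength}--\eqref{edgelength_hyp}), I may pass to the limit and obtain $|K_j(u^*(t))|\le C$. As $j$ and $t$ were arbitrary, $K(u^*(t))$ is uniformly bounded on $V\times[0,\infty)$, so $u^*$ is the asserted solution.

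There is no serious obstacle here once the uniformity observation is isolated: the proposition is essentially a corollary of the two preceding results. The only step requiring care is the interchange of limit and curvature, which rests solely on pointwise convergence of the conformal factors at $j$ and its neighbors (so $C^0$ convergence already suffices) together with continuity of the angle functions. I would also remark that the hyperbolic case is handled identically, since Theorem~\ref{longtime_eu} already guarantees that the limit stays in $(-\infty,0)^V$, so each $K_j(u^*(t))$ is well defined throughout and the same estimate applies verbatim.
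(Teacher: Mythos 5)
Your proof is correct and is essentially the paper's own argument: the paper derives Proposition \ref{bounded_initial} directly from Theorem \ref{longtime_eu} and Corollary \ref{uniform1}, and your write-up simply makes explicit the two points the paper leaves implicit, namely that the bound $\max\{C_0,2\pi\}$ from Corollary \ref{uniform1} is uniform in the exhaustion index (since $K_j(u^{[i]}(0))=K_j(u_0)$ for interior vertices) and that the bound survives the $C^1$ limit $u^{[i_l]}\to u^*$ by continuity of $K_j$ in the finitely many conformal factors at $j$ and its neighbors. Your remarks on the hyperbolic case (the limit staying in $(-\infty,0)^V$ via the $\bar{\delta}_j$ bound) match the paper's treatment as well.
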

\subsection{The uniqueness of the flow}
For the uniqueness of  the solution to the CRF on infinite triangulations, we have the following theorem.
\begin{thm}\label{uniqueness}
    Let $u(t)$ and $\hat{u}(t)$ be two solutions to the flow \eqref{infinite_flow} for $t\in[0,T]$ in Euclidean or hyperbolic background geometry with the same initial value $u(0)=\hat{u}(0)$. If $K(u(t))$ and $K(\hat{u}(t))$ are uniformly bounded on $V\times[0,T]$, then $u\equiv\hat{u}.$
\end{thm}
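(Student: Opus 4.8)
The plan is to study the difference $w(t)=u(t)-\hat u(t)$, to show it solves a linear heat-type equation on $\mathcal{T}$ with uniformly bounded weighted degree, and then to force $w\equiv 0$ by a maximum principle. First I would write, for each $i\in V$, using the fundamental theorem of calculus along the segment $\xi(s)=\hat u+sw$,
\begin{align*}
\ddt{w_i}=-\big(K_i(u)-K_i(\hat u)\big)=-\int_0^1\frac{d}{ds}K_i(\xi(s))\,ds=-\sum_{l}\Big(\int_0^1\pp{K_i}{u_l}(\xi(s))\,ds\Big)w_l,
\end{align*}
the sum running over $l=i$ and $l\sim i$, which is finite by local finiteness. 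Setting $\omega_{ij}(t)=-\int_0^1\pp{K_i}{u_j}(\xi(s))\,ds>0$ for $j\sim i$ and $\bar B_i(t)=\int_0^1 B_i(\xi(s))\,ds>0$, the sign facts in \eqref{change} together with the identity $\pp{K_i}{u_i}=-\sum_{j\sim i}\pp{K_i}{u_j}-\lambda B_i$ give the compact form
\begin{align*}
\ddt{w_i}=\sum_{j\sim i}\omega_{ij}(t)\,(w_j-w_i)+\lambda\,\bar B_i(t)\,w_i,
\end{align*}
with $\lambda=0$ in the Euclidean and $\lambda=-1$ in the hyperbolic case; note the hyperbolic zeroth-order term is dissipative. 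Since $w(0)=0$ and $|K_i(u)|,|K_i(\hat u)|\le \|K\|_{L^\infty(V\times[0,T])}$, integrating the flow yields the a priori bound $\|w(t)\|_{l^\infty}\le 2\|K\|_{L^\infty}T$ on $[0,T]$, so $w$ is a \emph{bounded} solution.

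The heart of the argument is the uniform weighted-degree bound \eqref{esti_weight}. Writing $-\pp{K_i}{u_j}=\sum_{k:\{i,j,k\}\in F}\pp{\theta_i^{jk}}{u_j}$, summing over $j\sim i$ (each face at $i$ is then counted twice), and applying Lemma~\ref{est_deri},
\begin{align*}
\sum_{j\sim i}\omega_{ij}(t)\le C\int_0^1\sum_{j\sim i}\sum_{k:\{i,j,k\}\in F}\theta_i^{jk}(\xi(s))\,ds=2C\int_0^1\big(2\pi-K_i(\xi(s))\big)\,ds.
\end{align*}
The remaining difficulty — and the main obstacle — is that $2\pi-K_i=\sum_{\{i,j,k\}}\theta_i^{jk}$ is the total angle at $i$, which I only control at the endpoints $s=0,1$, while the degree need not be bounded. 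To close this I would run a Gr\"onwall estimate in $s$: combining Lemma~\ref{est_deri} with the third part of Lemma~\ref{vari} (which expresses $\pp{\theta_i^{jk}}{u_i}$ through the two neighbour derivatives up to a correction that is likewise $\le C\theta_i^{jk}$) gives $|\frac{d}{ds}\theta_i^{jk}(\xi(s))|\le C'\|w\|_{l^\infty}\,\theta_i^{jk}(\xi(s))$, hence $|\frac{d}{ds}(2\pi-K_i(\xi(s)))|\le C'\|w\|_{l^\infty}(2\pi-K_i(\xi(s)))$ and
\begin{align*}
2\pi-K_i(\xi(s))\le e^{C'\|w\|_{l^\infty}}\big(2\pi-K_i(\hat u)\big)\le e^{2C'\|K\|_{L^\infty}T}\big(2\pi+\|K\|_{L^\infty}\big).
\end{align*}
This produces exactly $C=C(\|K\|_{L^\infty(V\times[0,T])},T)$ and explains the appearance of $T$.

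Finally I would prove $w\equiv 0$ by a maximum principle adapted to the infinite graph. Fix an origin $o$, let $\rho(i)=d(o,i)$; the bounded weighted degree gives $|\Delta_{\omega(t)}\rho(i)|=|\sum_{j\sim i}\omega_{ij}(\rho(j)-\rho(i))|\le C$ since $|\rho(j)-\rho(i)|\le 1$. For $\delta>0$ and $\kappa>C$ set $g_i(t)=w_i(t)-\delta(\kappa t+\rho(i))$; as $w$ is bounded and $\rho(i)\to\infty$, the supremum of $g$ over $V\times[0,T]$ is attained. From the evolution equation one checks $\partial_t g<\Delta_{\omega(t)}g$ (in the hyperbolic case the term $-\bar B_i w_i$ only helps, since at a positive spatial maximum $w_i>0$), so the maximum cannot occur at an interior time; hence $\sup g\le\sup_i g_i(0)\le 0$, i.e. $w_i(t)\le\delta(\kappa t+\rho(i))$, and letting $\delta\to 0$ gives $w\le 0$. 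The symmetric barrier $w_i+\delta(\kappa t+\rho(i))$ gives $w\ge 0$, so $u\equiv\hat u$. The delicate step is the uniform weight bound above; the barrier argument is the bounded-weighted-degree, discrete analogue of Wu's maximum principle and is comparatively routine once that bound is in hand.
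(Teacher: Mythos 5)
Your overall architecture coincides with the paper's: the same decomposition of $\ddt{w}$ into $\Delta_{\omega(t)}w$ plus a signed zeroth-order term (the paper's \eqref{unique_equ_hyp}--\eqref{func1}), the same target estimate \eqref{esti_weight}, and the same Wu-type barrier maximum principle (the paper's Lemma~\ref{mp2}); your barrier argument is sound, and your face-counting identity $\sum_{j\sim i}\omega_{ij}\le 2C\int_0^1(2\pi-K_i(\xi(s)))\,ds$ is correct. In the Euclidean case your Gr\"onwall-in-$s$ derivation even works in one shot on all of $[0,T]$, because there Lemma~\ref{vari}~III is an exact identity, $\pp{\theta_i^{jk}}{u_i}=-\pp{\theta_i^{jk}}{u_j}-\pp{\theta_i^{jk}}{u_k}$, so Lemma~\ref{est_deri} really does control the diagonal derivative by $C\theta_i^{jk}$.

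The gap is the hyperbolic case of exactly that step. Lemma~\ref{vari}~III gives only the strict inequality $\sum_{l}\pp{\theta_i^{jk}}{u_l}<0$, with no quantitative control of the deficit (the paper's $B_i$ is merely ``a positive function''), and your parenthetical claim that the correction is ``likewise $\le C\theta_i^{jk}$'' is false. Take $\Phi\equiv 0$, $r_j=r_k=r$ fixed, and $r_i=R\to\infty$: then $l_{ij}=l_{ik}=R+r$, $l_{jk}=2r$, and the hyperbolic law of cosines gives $1-\cos\theta_i^{jk}=2\sinh^2 r/\sinh^2(R+r)$, so $\theta_i^{jk}\sim 2\sinh r/\sinh(R+r)\to 0$, while $\pp{\theta_i^{jk}}{u_i}=\sinh R\,\pp{\theta_i^{jk}}{R}\to -(1-e^{-2r})\neq 0$; the ratio $|\pp{\theta_i^{jk}}{u_i}|/\theta_i^{jk}\sim\sinh R$ blows up. Consequently the differential inequality $|\frac{d}{ds}\theta_i^{jk}(\xi(s))|\le C'\|w\|_{l^\infty}\theta_i^{jk}(\xi(s))$ fails (when $w_i<0$ the unbounded term enters with the bad sign), and your Gr\"onwall bound on $2\pi-K_i(\xi(s))$ along the segment does not follow. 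The weight bound you want is nevertheless true, and the paper's proof shows how to get it while never differentiating in $u_i$: assume WLOG $u_i(t)\le\hat u_i(t)$ and use the monotonicity $\pp{\theta_i^{jk}}{u_i}<0$ (Lemma~\ref{vari}~I) to replace the $i$-th entry of $\xi(s)$ by $u_i(t)$, which only increases the angle; then compare the neighbour entries only, via Lemma~\ref{compare_hyp}, which is precisely your integrated $\ln\theta$ estimate restricted to the off-diagonal directions $u_j,u_k$, where Lemma~\ref{est_deri} applies. The price is that Lemma~\ref{compare_hyp} needs $\|w\|_{l^\infty}\le\epsilon_0$, so the paper works on time intervals of length $\delta=\epsilon_0/(2\mathcal{M})$ and iterates over $[n\delta,(n+1)\delta]$ rather than producing your single constant of the form $e^{C\mathcal{M}T}$. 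If you patch your hyperbolic step this way, the rest of your proof goes through as written.
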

In order to prove Theorem \ref{uniqueness}, we first introduce a discrete Laplacian operator on the triangulation $\mathcal{T}$. 
For a positive function on the edge set $\omega\in\mathbb{R}_+^{E},$ we call it an edge weight function. We define the discrete Laplacian $\Delta_{\omega}$ associated to the edge weight $\omega$ as, for any $g\in\mathbb{R}^V$,
\begin{equation}\label{eq:lap}
\Delta_\omega g_i=\sum_{j:j\sim i}\omega_{ij}(g_j-g_i), ~\forall i\in V,
\end{equation}
which is the standard Laplacian on a weighted graph; see e.g. \cite{chung1997spectral,MR3822363}.

To prove the uniqueness of the flow \eqref{infinite_flow}, we introduce a maximum principle on infinite triangulation $\mathcal{T}$, which is a discrete analog to the maximum principle of Ricci flow on the entire plane $\mathbb{R}^2$ introduced by Wu \cite{wu1993ricci}. 

\begin{lem}[Maximum principle II]\label{mp2}
Let $\mathcal{T}$ be an infinite triangulation, and let $\{\omega(t)\}_{t\ge 0}$ be a one-parameter family of weights on $E$ for $t\in [0,T]$ with $T>0$ such that
\begin{align}\label{weight_assum}
\sum_{j:j\sim i}\omega_{ij}(t)\le C,~~\forall(i,t)\in V\times[0,T],
\end{align}
where $C$ is a uniform constant. Suppose a function $g:V\times[0,T]\rightarrow\mathbb{R}$ satisfies 
    \begin{align}\label{mp3}
        \ddt{g}\le\Delta_{\omega(t)}g+hg.
    \end{align}
    If $g$ is a bounded function in $V\times[0,T]$ with $g\le 0$ at $t=0$, and if $h\le B$ for some constant $B$, then $$g(j,t)\le 0,~\quad\forall(j,t)\in V\times [0,T].$$
\end{lem}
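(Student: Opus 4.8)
The plan is to prove this by a comparison (barrier) argument, in the spirit of maximum principles for parabolic equations on noncompact spaces. First I would eliminate the zeroth-order term: setting $v=e^{-Bt}g$ and using $h\le B$, one checks that $\partial_t v\le \Delta_{\omega(t)}v+(h-B)v$ with $h-B\le 0$, while $v$ stays bounded on $V\times[0,T]$, satisfies $v(\cdot,0)\le 0$, and obeys $v\le 0 \iff g\le 0$. Hence I may assume from the outset that $h\le 0$.

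The essential difficulty is that $V$ is infinite, so the supremum of $v$ over $V\times[0,T]$ need not be attained and the naive maximum-principle argument breaks down. To force localization, I would construct a spatial barrier growing at infinity. Fix a base vertex $o\in V$ and set $\rho(i)=d(o,i)$. This is the unique place where the hypothesis \eqref{weight_assum} is used: since $|\rho(j)-\rho(i)|\le 1$ for $j\sim i$, we have $|\rho(j)^2-\rho(i)^2|\le 2\rho(i)+1$, and therefore by \eqref{eq:lap} and \eqref{weight_assum},
\begin{align*}
|\Delta_{\omega(t)}(\rho^2)(i)|\le \Big(\sum_{j\sim i}\omega_{ij}(t)\Big)(2\rho(i)+1)\le 2C\,(1+\rho(i)^2),\qquad\forall (i,t)\in V\times[0,T].
\end{align*}
With $\alpha:=2C$ I then set $\Phi(i,t)=e^{\alpha t}(1+\rho(i)^2)$, which is a supersolution: $\partial_t\Phi=\alpha\Phi\ge\Delta_{\omega(t)}\Phi\ge\Delta_{\omega(t)}\Phi+h\Phi$, the last inequality using $h\le 0$ and $\Phi\ge 0$. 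Moreover $\Phi(i,t)\ge 1+\rho(i)^2\to\infty$ as $\rho(i)\to\infty$, uniformly in $t$.

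For $\epsilon,\sigma>0$ I would then study $w:=v-\epsilon\Phi-\sigma t$. Using \eqref{mp3} (now with $h\le 0$) together with the supersolution property one obtains the strict inequality
\begin{align*}
\partial_t w\le \Delta_{\omega(t)}w+hw-\sigma\quad\text{on } V\times[0,T].
\end{align*}
Since $v$ is bounded, say $|v|\le M$, and $\Phi\ge 1+\rho^2$, we have $w(i,t)\le M-\epsilon(1+\rho(i)^2)$, so $w<0$ outside a finite ball $B_R(o)$ with $R=R(\epsilon)$; as $B_R(o)$ is finite and $w$ is continuous in $t$, if $\sup_{V\times[0,T]}w>0$ then it is attained at some $(i_0,t_0)$, and $t_0>0$ because $w(\cdot,0)<0$. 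There $i_0$ is a global spatial maximum, so $\Delta_{\omega(t_0)}w(i_0,t_0)\le 0$, and $t_0$ is a temporal maximum, so $\partial_t w(i_0,t_0)\ge 0$; since $w(i_0,t_0)>0$ and $h\le 0$, also $hw(i_0,t_0)\le 0$. Combining,
\begin{align*}
0\le \partial_t w(i_0,t_0)\le \Delta_{\omega(t_0)}w(i_0,t_0)+hw(i_0,t_0)-\sigma\le -\sigma<0,
\end{align*}
a contradiction. Hence $w\le 0$, i.e. $v\le \epsilon\Phi+\sigma t$; letting $\sigma\to 0$ and then $\epsilon\to 0$ with $(i,t)$ fixed (so $\Phi(i,t)$ is finite) yields $v\le 0$, whence $g\le 0$.

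The only genuinely delicate point is the non-compactness, which I dispose of through the growing barrier $\Phi$; its construction rests entirely on the uniform weighted-degree bound \eqref{weight_assum}, which forces $\Delta_{\omega(t)}(\rho^2)$ to grow no faster than $\rho^2$ and is exactly the discrete analogue of the controlled Laplacian of a distance-type function on $\mathbb{R}^2$ used by Wu. The auxiliary term $-\sigma t$ is inserted precisely to make the differential inequality strict, so that the first-derivative test at an interior-time maximum produces an honest contradiction rather than a borderline equality.
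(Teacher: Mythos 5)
Your proof is correct and takes essentially the same route as the paper: the paper perturbs by the barrier $\delta d_0+(C+1)\delta t$ (linear in the combinatorial distance to a base vertex, with the extra time term supplying strictness), while you use $\epsilon e^{2Ct}\bigl(1+d_0^2\bigr)+\sigma t$, and in both cases the hypothesis \eqref{weight_assum} bounds the weighted Laplacian of the distance barrier, the boundedness of $g$ localizes any positive supremum to a finite ball where it is attained, the first-derivative test at that maximum yields the contradiction, and the parameters are then sent to zero. The only difference is cosmetic --- your quadratic barrier with exponential time factor is slightly heavier machinery than the paper's linear one, but the mechanism is identical.
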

\begin{proof}
    Without loss of generality, we may assume that $B\le0.$ In fact, by setting $\bar{g}=e^{-Bt}g$ and $\bar{h}=h-B,$ the result is reduced to the above case. 
    Fixing a vertex $j_0\in V$, we denote by $d_0(j)=d(j,j_0)$ the combinatorial distance function to the vertex $j_0.$ By \eqref{weight_assum}, we have
    \begin{align*}
        \Delta_{\omega(t)}d_0(j)\le \sum_{k:k\sim j}\omega_{jk}(t)\le C ,\ \forall j\in V.
    \end{align*}
  For $\delta>0,$ set ${g}_\delta:={g}-\delta d_0-(C+1)\delta t.$ Then we have 
    \begin{eqnarray*}
        \ddt{{g}_\delta}&\le&\Delta_{\omega(t)}{g}_\delta+\delta(\Delta_{\omega(t)}d_0-C-1)+{h}{g}_\delta+{h}(\delta d_0+C\delta t+\delta t)\\
        &<& \Delta_{\omega(t)}{g}_\delta+{h}{g}_\delta.
    \end{eqnarray*}

We claim that ${g}_\delta\le 0 $ on $V\times[0,T]$.  
Suppose that this is not true, i.e. $\sup_{V\times [0,T]}g_\delta >0.$ Since ${g}$ is bounded, $$\sup_{t\in[0,T]}g_\delta(j,t)\to-\infty,\quad j\to \infty.$$ Moreover, $g_\delta(j,0)\leq g(j,0)\leq0,$ for any $j\in V.$ Then there exists $(j_1,t_1)\in V\times(0,T]$ attaining the maximum of $g_\delta$ on $V\times [0,T].$
Hence, $\ddt{{g}_\delta}(j_1,t_1)\ge 0$ and $\Delta_{\omega(t_1)}{g}_\delta(j_1,t_1)\le 0.$ Therefore,
\[
0\le\ddt{{g}_\delta}(j_1,t_1)< \Delta_{\omega(t_1)}{g}_\delta(j_1,t_1)+{h}{g}_\delta(j_1,t_1)\leq0,
\]
which leads to a contradiction. This proves the claim.

Finally, letting $\delta\rightarrow0$, we prove the result.
\end{proof}
Then we have the following corollary.
\begin{cor}\label{unique_lemma}
If $g(t)$ is a bounded solution to the equation
    \begin{align}\label{unique1}
        \ddt{g(t)}=\Delta_{\omega(t)}g + hg
    \end{align}\label{infinite_unique}
with $g(0)\equiv 0$ on $V$, where $\omega(t)$ satisfies  \eqref{weight_assum}, and the function $h\le B$ for some constant $B$, then for any $t\geq 0,$  $g(t)\equiv 0.$

\end{cor}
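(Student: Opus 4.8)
The plan is to deduce this directly from the Maximum Principle II (Lemma \ref{mp2}) by applying it to both $g$ and $-g$, exploiting the linearity of equation \eqref{unique1}.

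First I would observe that since $g$ solves \eqref{unique1} with equality, it satisfies in particular the differential inequality \eqref{mp3}, namely $\ddt{g}\le\Delta_{\omega(t)}g+hg$. Together with the standing hypotheses---that $g$ is bounded on $V\times[0,T]$, that $g(\cdot,0)\equiv 0\le 0$, that the weights satisfy \eqref{weight_assum}, and that $h\le B$---Lemma \ref{mp2} immediately yields $g(j,t)\le 0$ for all $(j,t)\in V\times[0,T]$.

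Next I would set $\tilde{g}:=-g$. By the linearity of \eqref{unique1}, $\tilde{g}$ solves the same equation $\ddt{\tilde{g}}=\Delta_{\omega(t)}\tilde{g}+h\tilde{g}$, with the \emph{same} weight family $\omega(t)$ and the \emph{same} coefficient $h$; moreover $\tilde{g}$ is bounded and $\tilde{g}(\cdot,0)\equiv 0\le 0$. Applying Lemma \ref{mp2} once more, now to $\tilde{g}$, gives $\tilde{g}(j,t)=-g(j,t)\le 0$, i.e. $g(j,t)\ge 0$ on $V\times[0,T]$. Combining the two one-sided bounds forces $g(j,t)=0$ on $V\times[0,T]$, and since $T>0$ is arbitrary (the hypotheses hold on every finite time interval), we conclude $g(t)\equiv 0$ for all $t\ge 0$.

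I do not anticipate any genuine obstacle here: this is a routine two-sided application of the one-sided comparison principle. The only point worth verifying is that the crucial hypotheses of Lemma \ref{mp2}---boundedness, the initial sign condition, the uniform-in-time weight bound \eqref{weight_assum}, and the upper bound $h\le B$---transfer verbatim from $g$ to $\tilde{g}=-g$. They do, precisely because the equation is linear and the bounds on $\omega(t)$ and $h$ are left unchanged by the sign flip $g\mapsto-g$.
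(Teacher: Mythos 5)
Your proof is correct and is precisely the argument the paper intends: the corollary is stated without a written proof because it follows from Lemma \ref{mp2} exactly as you describe, by applying the maximum principle to both $g$ and $-g$ (using the linearity of \eqref{unique1} and the fact that boundedness, the zero initial condition, the weight bound \eqref{weight_assum}, and the bound $h\le B$ are unchanged under the sign flip), then letting $T>0$ be arbitrary. No gaps, and no difference in approach.
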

For the uniqueness of the flow \eqref{infinite_flow} in both Euclidean or hyperbolic background geometry, we need the following lemmas.
\begin{lem}
    Let  $u$ and $\hat{u}$ be two solutions of the flow \eqref{infinite_flow}  in Euclidean or hyperbolic background geometry.  Then
    \begin{align}\label{unique_equ_hyp}
    \ddt{(u(t)-\hat{u}(t))}=\Delta_{\omega(t)}(u(t)-\hat{u}(t)) - h(u-\hat{u}),
\end{align} where
\begin{align}\label{weight1}
\omega_{ij}(t)=-\int_0^1\pp{K_i}{u_j}(su(t)+(1-s)\hat{u}(t))\mathrm{d}s>0,
\end{align}
\begin{align}\label{func1}
    h_i(t)=\int_0^1\left\{\pp{K_i}{u_i}(su(t)+(1-s)\hat{u}(t))+\sum_{k:k\sim i}\pp{K_i}{u_k}(su(t)+(1-s)\hat{u}(t))\right\}\mathrm{d}s\ge0.
\end{align}

\end{lem}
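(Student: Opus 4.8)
The plan is to derive \eqref{unique_equ_hyp} directly from the flow \eqref{infinite_flow} by subtracting the two evolution equations and rewriting the resulting curvature difference as an integral of partial derivatives along a linear path in the space of conformal factors. Writing $v(t):=u(t)-\hat u(t)$, the two flows immediately give
\[
\ddt{v_i}=-\bigl(K_i(u(t))-K_i(\hat u(t))\bigr),\qquad \forall i\in V.
\]
First I would interpolate: set $\gamma(s):=su(t)+(1-s)\hat u(t)$ for $s\in[0,1]$, so that $\gamma(0)=\hat u(t)$ and $\gamma(1)=u(t)$. Here lies the one point deserving care: in the Euclidean case the configuration space $\R^V$ is convex, and in the hyperbolic case $(-\infty,0)^V$ is convex as well, so $\gamma(s)$ is an admissible conformal factor for every $s\in[0,1]$. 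Since $K_i$ is smooth in the conformal factors and depends only on $u_i$ and on $u_j$ with $j\sim i$, the fundamental theorem of calculus and the chain rule yield
\[
K_i(u(t))-K_i(\hat u(t))=\int_0^1\frac{\mathrm d}{\mathrm ds}K_i(\gamma(s))\,\mathrm ds=\int_0^1\Bigl[\pp{K_i}{u_i}(\gamma(s))\,v_i+\sum_{j\sim i}\pp{K_i}{u_j}(\gamma(s))\,v_j\Bigr]\mathrm ds,
\]
where the sum is finite by the local finiteness of $\mathcal T$.

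Next I would separate the diagonal term from the off-diagonal ones and reorganize into Laplacian form. Defining $\omega_{ij}(t)$ exactly as in \eqref{weight1}, the off-diagonal contribution is $-\sum_{j\sim i}\omega_{ij}(t)v_j$, and the elementary identity $\sum_{j\sim i}\omega_{ij}v_j=\Delta_{\omega(t)}v_i+v_i\sum_{j\sim i}\omega_{ij}$ coming from \eqref{eq:lap} lets me rewrite
\[
K_i(u(t))-K_i(\hat u(t))=-\Delta_{\omega(t)}v_i+v_i\int_0^1\Bigl[\pp{K_i}{u_i}(\gamma(s))+\sum_{j\sim i}\pp{K_i}{u_j}(\gamma(s))\Bigr]\mathrm ds.
\]
The coefficient of $v_i$ is precisely $h_i(t)$ from \eqref{func1}, so substituting back into $\ddt{v_i}$ gives $\ddt{v_i}=\Delta_{\omega(t)}v_i-h_i v_i$, which is \eqref{unique_equ_hyp}.

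Finally I would verify the two sign claims, both immediate from the structural facts recorded just before Proposition~\ref{prop:degree}. The positivity $\omega_{ij}>0$ follows from $\pp{K_i}{u_j}<0$ for $i\sim j$ in \eqref{change}, so that the integral in \eqref{weight1} is negative and $\omega_{ij}$ is its negative. For $h_i\ge0$, the identity $\pp{K_i}{u_i}=-\sum_{j\sim i}\pp{K_i}{u_j}-\lambda B_i$ with $B_i>0$ shows that the integrand in \eqref{func1} equals $-\lambda B_i(\gamma(s))$, which vanishes identically in the Euclidean case ($\lambda=0$) and is strictly positive in the hyperbolic case ($\lambda=-1$); in either case $h_i\ge0$. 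The computation is essentially routine, so there is no substantial obstacle in this lemma itself; the only genuinely delicate points are the admissibility of the interpolation path noted above and the tacit use of the symmetry $\omega_{ij}=\omega_{ji}$ (a consequence of $\pp{K_i}{u_j}=\pp{K_j}{u_i}$ in \eqref{change}), which is what makes $\omega(t)$ a legitimate edge weight for $\Delta_{\omega(t)}$. The real work toward uniqueness, namely producing the uniform bound \eqref{weight_assum} on $\sum_{j\sim i}\omega_{ij}(t)$ so that Lemma~\ref{mp2} applies, is carried out separately via the angle-derivative estimate of Lemma~\ref{est_deri}.
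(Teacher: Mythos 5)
Your proof is correct and takes essentially the same route as the paper's: you subtract the two flows, apply the Newton--Leibniz formula along the linear interpolation $su(t)+(1-s)\hat u(t)$, and regroup the off-diagonal terms into the weighted Laplacian plus a zeroth-order term, with the sign claims following from \eqref{change} and the identity $\pp{K_i}{u_i}=-\sum_{j\sim i}\pp{K_i}{u_j}-\lambda B_i$ of Lemma~\ref{vari}. Your explicit checks of the convexity of the configuration space (admissibility of the interpolation path) and of the symmetry $\omega_{ij}=\omega_{ji}$ are points the paper leaves tacit, and they are correctly handled.
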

\begin{proof}By the flow \eqref{infinite_flow}  in Euclidean or hyperbolic background geometry, we have
\begin{align}\label{difference0}
    \ddt{(u_i(t)-\hat{u}_i(t))}=-(K_i(u(t))-K_i(\hat{u}(t))),~~\forall i\in V.
\end{align}
For a fixed time $t$, by the Newton-Leibniz formula and Lemma \ref{vari},
\begin{align*}
    &K_i(u(t))-K_i(\hat{u}(t))\\
    &=\sum_{j:j\sim i}\int_0^1\pp{K_i}{u_j}(su(t)+(1-s)\hat{u}(t))\mathrm{d}s\cdot[u_j(t)-\hat{u}_j(t)]+
    \\&\int_0^1\pp{K_i}{u_i}(su(t)+(1-s)\hat{u}(t))\mathrm{d}s\cdot[u_i(t)-\hat{u}_i(t)]
    \\&=\sum_{j:j\sim i}\int_0^1\pp{K_i}{u_j}(su(t)+(1-s)\hat{u}(t))\mathrm{d}s\cdot[(u_j(t)-\hat{u}_j(t))-(u_i(t)-\hat{u}_i(t))]+
    \\&\int_0^1\left\{\pp{K_i}{u_i}(su(t)+(1-s)\hat{u}(t))+\sum_{j:j\sim i}\pp{K_i}{u_j}(su(t)+(1-s)\hat{u}(t))\right\}\mathrm{d}s\cdot(u_i(t)-\hat{u}_i(t)).
\end{align*} This proves the lemma.

\end{proof}

   We need the following lemma to bound the angles when conformal factors vary.
\begin{lem}\label{compare_hyp}
    Let $f=\{i,j,k\}$ be a face with a circle-packing metric in Euclidean or hyperbolic background geometry. There exist universal constants $\epsilon_0$ and $C_0$ such that if discrete conformal factors $u_f:=(u_i,u_j,u_k)$ and $u'_f:=(u_i,u_j',u_k')$ satisfy $\max\{|u_j-u_j'|,|u_k-u_k'|\}\leq\epsilon_0,$ then 
   \begin{align*}
       \theta_i^{jk}(u_f)\le C_0\theta_i^{jk}(u_f').
   \end{align*}
\end{lem}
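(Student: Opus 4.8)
The plan is to join the two triples by a straight-line path in the conformal factors and to control the \emph{logarithmic} derivative of the angle along it. The point is that Lemma~\ref{est_deri} gives an additive bound $\pp{\theta_i^{jk}}{u_j}\le C\theta_i^{jk}$, and dividing by $\theta_i^{jk}$ converts this into a bound on $\frac{d}{ds}\ln\theta_i^{jk}$ that is uniform and independent of how small the angle is; integrating then yields the desired multiplicative comparison. Concretely, I would keep $u_i$ fixed and set, for $s\in[0,1]$,
\[
u_j(s)=(1-s)\,u_j'+s\,u_j,\qquad u_k(s)=(1-s)\,u_k'+s\,u_k,
\]
and write $\phi(s)=\theta_i^{jk}(u_i,u_j(s),u_k(s))$. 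The path stays in the admissible configuration space: for $\Phi\in[0,\tfrac\pi2]^3$ every triple of conformal factors produces a nondegenerate triangle, so $\phi(s)\in(0,\pi)$ is smooth and strictly positive; and in the hyperbolic case each $u_j(s),u_k(s)$ remains negative, being a convex combination of negative numbers, so the path lies in $(-\infty,0)$.

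Next I would differentiate. By the chain rule,
\[
\frac{d}{ds}\ln\phi(s)=\frac{1}{\phi(s)}\left[\pp{\theta_i^{jk}}{u_j}\,(u_j-u_j')+\pp{\theta_i^{jk}}{u_k}\,(u_k-u_k')\right],
\]
where the partials are evaluated at $(u_i,u_j(s),u_k(s))$. By Lemma~\ref{vari}~I) both $\pp{\theta_i^{jk}}{u_j}$ and $\pp{\theta_i^{jk}}{u_k}$ are positive (here $i\neq j,k$), and by Lemma~\ref{est_deri} each is bounded above by $C\phi(s)$ with $C$ the uniform constant valid in both background geometries. Hence
\[
\left|\frac{d}{ds}\ln\phi(s)\right|\le C\big(|u_j-u_j'|+|u_k-u_k'|\big)\le 2C\epsilon_0 .
\]

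Integrating over $s\in[0,1]$ gives $\big|\ln\phi(1)-\ln\phi(0)\big|\le 2C\epsilon_0$, that is,
\[
\theta_i^{jk}(u_f)=\phi(1)\le e^{2C\epsilon_0}\,\phi(0)=e^{2C\epsilon_0}\,\theta_i^{jk}(u_f'),
\]
so one may take any fixed $\epsilon_0>0$ (for instance $\epsilon_0=1$) together with $C_0=e^{2C\epsilon_0}$, both universal since $C$ is. I expect the only delicate point — the main obstacle — to be justifying that $\phi$ stays smooth and bounded away from $0$ along the whole interpolation, so that $\ln\phi$ is differentiable and Lemma~\ref{est_deri} applies pointwise; this is exactly the nondegeneracy of circle-packing triangles for $\Phi\in[0,\tfrac\pi2]$, after which the estimate is a direct consequence of the additive-to-multiplicative conversion furnished by Lemma~\ref{est_deri}.
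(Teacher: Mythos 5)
Your proposal is correct and coincides with the paper's own proof: both interpolate linearly between $u_f'$ and $u_f$ with $u_i$ fixed, bound the logarithmic derivative of $\theta_i^{jk}$ via Lemma~\ref{est_deri} together with the positivity from Lemma~\ref{vari}~I), and integrate to obtain $\theta_i^{jk}(u_f)\le e^{2C\epsilon_0}\theta_i^{jk}(u_f')$. Your extra remarks on admissibility of the path (convex combinations staying in $(-\infty,0)$ in the hyperbolic case, nondegeneracy so $\ln\theta_i^{jk}$ is smooth) make explicit a point the paper leaves implicit, but the argument is the same.
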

\begin{proof}
     Considering the function $H_i^{jk}(u)=\ln\theta_i^{jk}(u),$ for $\theta_{i}^{jk}\neq 0,$
    we have 
    \[
    \pp{H_i^{jk}}{u_j}=\frac{1}{\theta_i^{jk}}\pp{\theta_i^{jk}}{u_j}.
    \]
    By Lemma \ref{est_deri} and Lemma~\ref{vari}, $0< \pp{H_i^{jk}}{u_j}\le C.$ We have a similar bound for $\pp{H_i^{jk}}{u_k}.$
    Therefore, for $\max\{|u_j-u_j'|,|u_k-u_k'|\}\le \epsilon_0,$ 
\begin{eqnarray*}
    \ln\frac{\theta_{i}^{jk}(u_f)}{\theta_{i}^{jk}(u_f')}&=&\int_0^1\left\{\pp{H_i^{jk}}{u_j}(su_f+(1-s)u_f')(u_j-u_j')+\pp{H_i^{jk}}{u_k}(su_f+(1-s)u_f')(u_k-u_k')\right\}ds\\
    &\leq & 2C\epsilon_0.
\end{eqnarray*} Hence, $$\theta_i^{jk}(u_f)\le e^{2C\epsilon_0}\theta_i^{jk}(u_f'),$$ which proves the lemma.

\end{proof}

\begin{proof}[Proof of Theorem~\ref{uniqueness}]
   Let $u$ and $\hat{u}$ be two solutions of the flow \eqref{infinite_flow} in Euclidean or hyperbolic background geometry on $V\times [0,T]$ for $T>0$ with 
   \[u(t),\hat{u}(t)\in C_t^{\infty}(V\times [0,T]);K(u(t)),K(\hat{u}(t))\in L^\infty(V\times [0,T]).\]
 Let $$\mathcal{M}:=\|K(u)\|_{L^{\infty}(V\times [0,T])} +\|K(\hat{u})\|_{L^{\infty}(V\times [0,T])}<\infty.$$ Let $\omega(t)=\{\omega_{ij}(t)\}_{\{i,j\}\in E}$ be weights defined in \eqref{weight1}.
   We claim that there are constants $\delta=\delta(\mathcal{M})$ and  $C(\mathcal{M})$, such that $\sum_{j\sim i}\omega_{ij}(t)\le C(\mathcal{M})$ for all $(i,t)\in V\times[0,\delta_1],$ where $\delta_1:=\min\{\delta,T\}.$  Set $\delta:=\frac{\epsilon_0}{2\mathcal{M}},$ where $\epsilon_0$ is defined in Lemma \ref{compare_hyp}. 
Fix any $(i,t)\in V\times[0,\delta_1]$. Without loss of generality, we may assume that $u_i(t)\le\hat{u}_i(t).$ Note that by the CRF equation, $$\|u(t)-\hat{u}(t)\|_{L^{\infty}(V\times [0,\delta_1])}\leq \epsilon_0.$$ Then by Lemma \ref{compare_hyp} and  Lemma \ref{est_deri}, for any $j\sim i$ and $t\in[0,\delta_1],$ 
\begin{align}
&\left|\pp{K_i}{u_j}(su(t)+(1-s)\hat{u}(t))\right|\le C\sum_{k:\{i,j,k\}\in F}\theta_i^{jk}(su(t)+(1-s)\hat{u}(t))
\nonumber\\&\le  C\sum_{k:\{i,j,k\}\in F}\theta_i^{jk}(u_i(t),su_j(t)+(1-s)\hat{u}_j(t),su_k(t)+(1-s)\hat{u}_k(t))\nonumber
 \\&
\le CC_0\sum_{k:\{i,j,k\}\in F}\theta_i^{jk}(u(t)),
\end{align}
where the second inequality follows from the monotonicity of $\theta_i^{jk}$ with respect to the variable $u_i$ by I) in Lemma~\ref{vari}. Therefore
\[
\sum_{j:j\sim i}\omega_{ij}(t)\le 2CC_0\sum_{\{i,j,k\}\in F}\theta_i^{jk}(u(t))\le 2CC_0(2\pi+\mathcal{M}).
\] This proves the claim.
Let $g(t)=u(t)-\hat{u}(t).$ By \eqref{unique_equ_hyp}, we have 
\[
\ddt{g(t)}=\Delta_{\omega(t)}g(t)+\tilde{h}(t)g(t),
\]
where $\tilde{h}$ is a non-positive function. Moreover, $g_i(t)\le 2\delta_1\mathcal{M}$ for all $(i,t)\in V \times[0,\delta_1].$ 
Applying Corollary \ref{unique_lemma}, we have 
\[
u(t)\equiv \hat{u}(t),~\forall t\in[0,\delta_1].
\]
Since $\delta$ only depends on $\mathcal{M}$, by the same argument for $[n\delta,(n+1)\delta]$ for $n\in\mathbb{Z}_+$, we prove the uniqueness on $[0,T].$

\end{proof}
This yields the main result of the well-posedness of the CRF.
\begin{proof}[Proof of Theorem~\ref{wellposed-Euc}]
The result follows from Theorem~\ref{longtime_eu}
 and Theorem~\ref{uniqueness}.    
\end{proof}

\section{Convergence of the hyperbolic CRF}\label{sec:4}
 In this section, we study the convergence of the CRF in hyperbolic background geometry. A planar graph $G$ is said to be $CP$ parabolic ($CP$ hyperbolic resp.) if
there is a locally finite circle pattern $P$ in the plane (unit disk resp.) with contact graph $G$. We recall the following combinatorial conditions.
\begin{enumerate}[(C1)]
    \item If the edges $e_1,e_2$ and $e_3$ form a null-homotopic closed curve in the triangulation $\mathcal{T}$, and if $\sum_{i=1}^3\Phi(e_i)\ge\pi,$ then these edges form the boundary of a triangle of $\mathcal{T}$.
    \item If the edges $e_1,e_2,e_3$ and $e_4$ form a null-homotopic closed curve in the triangulation $\mathcal{T}$ and $\sum_{i=1}^3\Phi(e_i)=2\pi,$ then these edges form the boundary of the union of two adjacent triangles of $\mathcal{T}$.
\end{enumerate}

For an infinite circle pattern on the plane, He proves the following important result.
\begin{thm}[He \cite{HE}]\label{he}
    Let $\mathcal{T}$ be a disk triangulation graph with intersection angles $\Phi\in [0,\frac{\pi}{2}]^E$ satisfying conditions (C1) and (C2). $\mathcal{T}$ is VEL-parabolic (VEL-hyperbolic resp.) if and only if $\mathcal{T}$ is $CP$ parabolic ($CP$ hyperbolic resp.).
\end{thm}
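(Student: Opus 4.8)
The plan is to treat both classifications as genuine dichotomies and reduce the equivalence to two one-directional implications. On the combinatorial side the dichotomy is built into the definition: either there is a finite $A$ with $\mathrm{VEL}(A,\infty)=\infty$ or there is not. On the geometric side one must first know that exactly one of the two alternatives occurs, so I would begin by establishing the \emph{$CP$ dichotomy}: under (C1) and (C2), $\mathcal{T}$ carries a locally finite circle pattern with the prescribed angles $\Phi$ in either $\mathbb{C}$ or $\mathbb{D}$, and not both. Granting this, it suffices to prove the two implications $CP$-parabolic $\Rightarrow$ $\mathrm{VEL}$-parabolic and $CP$-hyperbolic $\Rightarrow$ $\mathrm{VEL}$-hyperbolic; the converses and the parabolic/hyperbolic pairing then follow by contraposition using the two dichotomies.

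For the $CP$ dichotomy I would use finite exhaustion. Choose $\mathcal{T}_1\subset\mathcal{T}_2\subset\cdots$ with $\cup_n\mathcal{T}_n=\mathcal{T}$ as in Section~\ref{sec:3}, realize each finite piece by a circle pattern with angles $\Phi$ (Koebe--Andreev--Thurston, in the form for prescribed intersection angles), and normalize by fixing the circle at a base vertex. The heart of the matter is a compactness argument passing to a locally finite limit pattern; here (C1) and (C2) supply the quantitative control on the pattern that, for unbounded valence, substitutes for the Ring Lemma, ensuring the limit is nondegenerate and locally finite. That the limit lives in $\mathbb{C}$ or in $\mathbb{D}$, and is unique up to the relevant conformal group, is the content of the He--Schramm rigidity theorem, which simultaneously rules out both types occurring at once.

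The bridge between the two classifications is a comparison of the combinatorial $\mathrm{VEL}$ with the classical extremal length of the family of curves joining a fixed compact set to the ideal boundary of the host domain. Fixing a realized pattern $P$ with radii $r=(r_v)$ and centers $(z_v)$, I would use $r$, suitably normalized, as a candidate admissible metric: since $\Phi\in[0,\frac{\pi}{2}]^E$ the circles overlap with controlled angles, so along any infinite path $\gamma=v_0v_1\cdots$ the sum $\sum_{v\in\gamma}r_v$ is comparable to the Euclidean length of the polyline through the centers, while $\sum_v r_v^2$ is comparable to the area covered by the circles. This renders $\mathrm{VEL}(A,\infty)$ comparable to the extremal distance, inside the host domain, from the carrier of $A$ to its ideal boundary. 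When $P$ lies in $\mathbb{C}$ the centers escape to $\infty$ and this extremal distance is infinite (the plane is parabolic), yielding admissible metrics $m$ of arbitrarily small $l^2$-energy and hence $\mathrm{VEL}(A,\infty)=\infty$; when $P$ lies in the bounded domain $\mathbb{D}$ the centers remain in a bounded region, the curves to infinity approach the boundary of the carrier at finite extremal distance, and so $\mathrm{VEL}(A,\infty)<\infty$ for every finite $A$.

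The main obstacle is the compactness step in the $CP$ dichotomy in the absence of a bound on the vertex degrees: the classical Ring Lemma gives uniform ratio bounds for adjacent radii only under bounded valence, and its failure for large valence is exactly what conditions (C1) and (C2) are designed to circumvent. Establishing the quantitative estimates that permit extraction of a locally finite, nondegenerate limit, together with invoking rigidity to pin down its host domain, is the deep part; the metric comparison of the third paragraph and the final contraposition are then comparatively routine.
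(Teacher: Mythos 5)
First, a point of comparison: the paper does not prove this statement at all --- Theorem \ref{he} is imported verbatim from He's paper \cite{HE} and used as a black box (e.g.\ in Corollary \ref{negative}). So there is no internal proof to match your proposal against, and it must be judged on its own merits. As a roadmap it does follow the strategy of the actual literature (He--Schramm for tangency packings, He for patterns with angles): dichotomy on each side, two one-directional implications, and a comparison of $\mathrm{VEL}$ with continuum extremal length via the radii. But as a proof it has two genuine gaps. The first is near-circularity in the ``CP dichotomy'' step: the rigidity theorem you invoke to rule out a pattern living in both $\mathbb{C}$ and the unit disk, for general $\Phi\in[0,\frac{\pi}{2}]^E$ under (C1)--(C2), \emph{is} the main theorem of the cited source \cite{HE}; your plan therefore assumes the deep half of the result it is meant to prove. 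Relatedly, (C1)--(C2) are not merely a Ring-Lemma substitute in the compactness step --- they are the Andreev-type conditions already required for the finite exhaustion pieces to admit patterns with the prescribed angles, so they enter at the existence stage, before any limit is taken.

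The second gap is the substantive mathematical one, in your third paragraph. Using the radii $(r_v)$ as an admissible vertex function produces \emph{one} function $m$ with controlled $\|m\|_{l^2}$, and since $\mathrm{VEL}(A,\infty)=\max\{1/\|m\|^2_{l^2}\}$ this can only bound $\mathrm{VEL}$ from \emph{below}. That direction suffices for ``pattern in $\mathbb{C}$ $\Rightarrow$ VEL-parabolic'' (small-energy admissible functions supported on large annuli). But ``pattern in the disk $\Rightarrow$ VEL-hyperbolic'' requires an \emph{upper} bound on $\mathrm{VEL}(A,\infty)$, i.e.\ a uniform positive lower bound on $\|m\|_{l^2}$ over \emph{all} $\Gamma(A,\infty)$-admissible $m$. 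For that you must go the other way: transport an arbitrary admissible vertex function $m$ to a continuum metric (roughly $\rho = m(v)/r_v$ on the disk at $v$), prove $\int \rho^2 \le C\,\|m\|_{l^2}^2$ using a universal bounded-multiplicity lemma for disk patterns with angles in $[0,\frac{\pi}{2}]$ (each pairwise angle $\le \frac{\pi}{2}$ forces centers outside each other's disks, whence a packing bound on overlap), and show every continuum curve from the carrier of $A$ to the ideal boundary can be shadowed by a vertex path so that its $\rho$-length is bounded below; finiteness of the classical extremal distance in the disk then forces $\|m\|_{l^2}$ bounded away from $0$. Your sentence ``the curves to infinity approach the boundary of the carrier at finite extremal distance, and so $\mathrm{VEL}(A,\infty)<\infty$'' silently uses this two-sided comparability, but your sketch only ever constructs metrics in the easy direction, so the hyperbolic half of the correspondence --- the crux of the theorem --- is missing.
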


Given a VEL-hyperbolic triangulation $\mathcal{T}$, by Theorem \ref{he}, there exists a locally finite circle packing in the unit disc $U$. Therefore, a hyperbolic circle packing of $\mathcal{T}$ exists if we view the unit disc as the Poincar\'e disc model of the hyperbolic plane $\mathbb{H}^2$. However, He's method for constructing such circle packings relies on the embedding of the hyperbolic space into the plane $\mathbb{R}^2,$ which is hard to use for studying hyperbolic circle-packing metric with non-zero discrete Gaussian curvatures of infinite disk triangulations. 
Now we will prove Theorem \ref{converge_hyp} which provides an intrinsic way to find circle packings in hyperbolic space $\mathbb{H}^2$ for some triangulations. 
\begin{proof}[\textbf{{Proof of Theorem \ref{converge_hyp}}}]
    Let $u(0)$ be the discrete conformal factors such that \[
    K_i(u(0))\le 0,
    \]
    for each vertex $i.$ Let $u^*(t)$ be the flow \eqref{infinite_flow} obtained by the convergence of a subsequence of $\{u^{[i]}(t)\}_{i=1}^{\infty}$.
    By Lemma \ref{max_principle}, $K_j(u^{[i]}(t))\le 0$ for all $i\in\mathbb{N}$ and $j\in \mathrm{int}(V_i).$ Therefore, the limit metric satisfies $K_j(u^*(t))\le 0$ for all $(j,t)\in V\times[0,\infty).$
By the CRF equation, this means $u_j^*(t)$ is non-decreasing for each $j\in V$, and has a limit $u_j^*(\infty)\in (-\infty,\bar{\delta}_j]$ by the proof of Theorem~\ref{longtime_eu}, where $\bar{\delta}_j<0$ is given in \eqref{hyperconstant} depending on $u_j(0)$ and $\deg(j).$ 
Since $u_k^*(t)$ converges for all $k\in B_1(j),$ $K_j(u^*(t))$ must converge to some $K_j(\infty)$. Now we prove $K_j(\infty)=0$ for any $j\in V.$ 

Suppose it is not true, i.e. $K_j(\infty)<0$ for some $j\in V.$ Then there exists a $T>0$ such that $K_j(u^*(t))\le\frac{K_j(\infty)}{2}<0,$ whenever $t>T.$ Therefore, $\ddt{u^*_j}\ge-\frac{K_j(\infty)}{2}>0$ for $t>T.$ So $u_j^*(t)$ will exceed $\bar{\delta}_j$ within finite time, which leads to a contradiction. Therefore, $K_j(\infty)=0$ for all $j.$
\end{proof}
Now we have a direct application of Theorem \ref{converge_hyp}.
\begin{proof}[Proof of Corollary \ref{negative}]
    Since $\mathcal{T}$ attains a circle-packing metric with non-positive discrete Gaussian curvature, by Theorem \ref{converge_hyp} there exists a circle-packing metric $r^*$ on $\mathcal{T}$ with zero discrete Gaussian curvatures. Applying the same argument of Stephenson, see e.g. \cite[Chapter 6 and Chapter 8]{Stephenson_intro}, there exists a locally finite maximal circle packing of $\mathcal{T}$ whose carrier is the unit disc. By Theorem \ref{he}, the triangulation is VEL-hyperbolic.
\end{proof}
\begin{rem}
    Although the argument of Stephenson in \cite{Stephenson_intro} does not deal with circle packings with intersection angles, one easily generalizes it to this setting. In addition, the word ``univalent'' here stands for the condition that the immersion of the surface with the circle-packing metric is an embedding.
\end{rem}

   
We have the following corollary for some special triangulations.
\begin{cor}
    Let $\mathcal{T}$ be an infinite disk triangulation with  $\Phi\equiv0.$ Suppose that $\deg(i)\ge 7$ for all vertices $i$, then $\mathcal{T}$ is VEL-hyperbolic, and it attains a hyperbolic circle-packing metric with zero discrete Gaussian curvature.
\end{cor}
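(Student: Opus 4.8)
The plan is to reduce the statement to Corollary~\ref{negative}. That corollary already packages both desired conclusions: once $\mathcal{T}$ is shown to admit \emph{some} hyperbolic circle-packing metric with non-positive discrete Gaussian curvatures, it follows that $\mathcal{T}$ carries a zero-curvature metric and that its $1$-skeleton is VEL-hyperbolic. So the entire task is to exhibit one such metric, and the natural candidate is the uniform metric $r_i\equiv r$ for a suitably small constant $r>0$.

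First I would record the geometry of the uniform metric. Since $\Phi\equiv 0$, the hyperbolic edge-length formula \eqref{edgelength_hyp} collapses to $l_{ij}=\cosh^{-1}(\cosh(r_i+r_j))=r_i+r_j$, so with $r_i\equiv r$ every face is an equilateral hyperbolic triangle of side $2r$. By the hyperbolic law of cosines, its common inner angle $\theta=\theta(r)$ satisfies
\[
\cos\theta=\frac{\cosh 2r}{\cosh 2r+1}=1-\frac{1}{\cosh 2r+1}.
\]
This is strictly increasing in $r$, hence $\theta(r)$ is strictly decreasing, with $\theta(r)\uparrow \tfrac{\pi}{3}$ as $r\downarrow 0$ (the Euclidean limit) and $\theta(r)\downarrow 0$ as $r\to\infty$.

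The key numerical input is the strict inequality $\tfrac{2\pi}{7}<\tfrac{\pi}{3}$. By the limit above I may fix $r>0$ small enough that $\theta(r)>\tfrac{2\pi}{7}$. For the uniform metric the curvature at any vertex $i$ is
\[
K_i=2\pi-\deg(i)\,\theta(r),
\]
and since $\deg(i)\ge 7$ together with $\theta(r)>\tfrac{2\pi}{7}$ gives $\deg(i)\,\theta(r)\ge 7\,\theta(r)>2\pi$, we conclude $K_i<0$ for every vertex $i$. Thus $r_i\equiv r$ is a hyperbolic circle-packing metric with strictly negative discrete Gaussian curvatures, and Corollary~\ref{negative} then delivers both the zero-curvature metric and the VEL-hyperbolicity, completing the argument.

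I do not anticipate a serious analytic obstacle: the whole content is the comparison of the equilateral angle against the threshold $2\pi/\deg(i)$, and admissibility of the uniform tangential metric is automatic in hyperbolic geometry for every $r>0$. The only point demanding care is confirming that the limiting angle $\tfrac{\pi}{3}$ strictly exceeds $\tfrac{2\pi}{7}$, so that a genuine open interval of admissible radii exists; this is precisely what makes the hypothesis $\deg\ge 7$ (rather than $\deg\ge 6$) the sharp one, mirroring the Euclidean fact that uniform degree-$6$ packings are flat while degree $\ge 7$ already forces negative curvature in the uniform metric.
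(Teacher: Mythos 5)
Your proposal is correct and follows essentially the same route as the paper: both exhibit a constant metric with sufficiently small radii, observe that the equilateral angle tends to $\frac{\pi}{3}>\frac{2\pi}{7}$ as $r\downarrow 0$ (the paper via a limiting Gauss--Bonnet argument, you via the explicit hyperbolic law of cosines $\cos\theta=\frac{\cosh 2r}{\cosh 2r+1}$), so that $\deg(i)\ge 7$ forces $K_i<0$ everywhere, and then invoke the hyperbolic CRF convergence results. Your citation of Corollary~\ref{negative} rather than Theorem~\ref{converge_hyp} alone is if anything the more precise reference, since it is the corollary that packages the VEL-hyperbolicity conclusion.
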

\begin{proof}
     Taking a sequence of discrete conformal factors $u^{n}\equiv-n,$ by the Gauss-Bonnet Theorem, one easily sees that for any  $\epsilon>0,$ there exists a large number $N$ such that for any ~$n\ge N,$
     \[
     \theta_i^{jk}(u^n)>\frac{\pi}{3}-\epsilon,~\forall\{i,j,k\}\in F.
     \]
    Let $\epsilon=\frac{\pi}{21}.$  There exists $N$ such that  for any ~$n\ge N,$
     \[
     \theta_i^{jk}(u^n)>\frac{2\pi}{7},~\forall\{i,j,k\}\in F.
     \]
     Therefore, for any $i\in V,$
     $$K_i(u^{n})<2\pi-\frac{2\deg(i)\pi}{7}\le0.$$ The result follows from Theorem~\ref{converge_hyp}.\end{proof}
     
A natural generalization of the above discussions is to consider the combinatorial curvature flows with prescribed discrete Gaussian curvatures.
Let $\hat{K}\in (-\infty,2\pi)^V$ be prescribed discrete Gaussian curvatures on vertices. Then one considers the flow
\begin{align}\label{prescribed}
    \ddt{u_i}=-(K_i-\hat{K}_i),~~\forall i\in V.
\end{align}
All previous results extend to the setting of hyperbolic background geometry.
\begin{thm}\label{prescribe_conv}
    Given an infinite disk triangulation $\mathcal{T}$ with the intersection angle $\Phi\in[0,\frac{\pi}{2}]^E$,
    let $\hat{K}$ be a prescribed discrete Gaussian curvature. If $u(0)$ is a discrete conformal factor in hyperbolic background geometry such that $K_i(u(0))\le \hat{K}_i$ for each $i$, then there exists a solution $u(t)$ to the flow \eqref{prescribed} with initial value $u(0)$, which converges to a surface with prescribed discrete Gaussian curvatures $\hat{K}_i$.
Moreover, if the $K(u(0))$ is uniformly bounded, then there is a unique solution such that
   \[
 u=u(i,t)\in C_t^{\infty}(V\times[0,\infty)) ~\text{and} ~K=K(i,t)\in C_t^{\infty}(V\times[0,\infty))\cap L^\infty(V\times [0,\infty)).
 \]
\end{thm}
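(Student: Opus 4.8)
The plan is to adapt the proof of Theorem \ref{converge_hyp} almost verbatim, the decisive point being that the prescribed-curvature flow \eqref{prescribed} is governed by the \emph{modified curvature} $\widetilde{K}_i:=K_i-\hat{K}_i$, which plays exactly the role that $K_i$ played for the flow \eqref{infinite_flow}. First I would fix the exhaustion $\{\mathcal{T}_j\}$ as in Section \ref{sec:3} and solve, for each $j$, the finite flow $\ddt{u^{[j]}_i}=-(K_i-\hat{K}_i)$ on $\mathrm{int}(V_j)$ with Dirichlet boundary data $u_i(0)$ on $\partial V_j$. Since $\hat{K}$ is time-independent, differentiating $\widetilde{K}_i$ along the flow and using \eqref{change} together with the identity $\pp{K_i}{u_i}=-\sum_{k\sim i}\pp{K_i}{u_k}+B_i$ (with $B_i>0$ in the hyperbolic setting) gives
\[
\ddt{\widetilde{K}_i}=\sum_{k\sim i}C_{ik}(\widetilde{K}_k-\widetilde{K}_i)-B_i\widetilde{K}_i,\qquad C_{ik}=-\pp{K_i}{u_k}>0,
\]
which is structurally identical to the curvature evolution in Lemma \ref{max_principle}. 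Hence Maximum Principle I applies to $\widetilde{K}$: the hypothesis $\widetilde{K}_i(u(0))=K_i(u(0))-\hat{K}_i\le 0$ is preserved along each finite flow, so $\ddt{u^{[j]}_i}=-\widetilde{K}_i\ge 0$ and each $u^{[j]}_i(t)$ is non-decreasing.

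Next I would establish the a priori upper bound keeping the conformal factors in $(-\infty,0)$. This is where the assumption $\hat{K}_i<2\pi$ enters: using Proposition \ref{hyp_char}, choose $\delta_i<0$ so that $u_i^{[j]}(t)\ge\delta_i$ forces every angle $\theta_i^{kl}<\frac{2\pi-\hat{K}_i}{\deg(i)}$, whence $K_i=2\pi-\sum_{kl}\theta_i^{kl}>\hat{K}_i$, i.e. $\widetilde{K}_i>0$; this is exactly the sign needed to run the barrier argument of \eqref{hyperconstant} and conclude $\sup_{[0,T]}u_i^{[j]}(t)\le\bar{\delta}_i<0$. With the monotonicity and this uniform bound, the Arzel\`a--Ascoli and diagonal argument from the proof of Theorem \ref{longtime_eu} produces a global limit solution $u^*(t)$ to \eqref{prescribed} with $u^*_i(t)$ non-decreasing and $\widetilde{K}_i(u^*(t))\le 0$ for all $(i,t)$. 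Since each $u^*_i(t)$ is non-decreasing and bounded above by $\bar{\delta}_i$, it converges to some $u^*_i(\infty)\in(-\infty,\bar{\delta}_i]$, and continuity of $K_i$ in the finitely many variables $\{u_k:k\in B_1(i)\}$ gives $K_i(u^*(t))\to K_i(\infty)$. To see $K_i(\infty)=\hat{K}_i$, I would argue by contradiction as in Theorem \ref{converge_hyp}: if $\widetilde{K}_i(\infty)<0$ then eventually $\ddt{u^*_i}=-\widetilde{K}_i\ge-\tfrac12\widetilde{K}_i(\infty)>0$, forcing $u^*_i(t)$ to exceed $\bar{\delta}_i$ in finite time, a contradiction.

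For uniqueness and regularity under the extra assumption that $K(u(0))$ is uniformly bounded, the key observation is that for two solutions $u,v$ of \eqref{prescribed} the prescribed term cancels in the difference: $\ddt{(u_i-v_i)}=-(K_i(u)-K_i(v))$, which is precisely equation \eqref{difference0}. Thus $u-v$ satisfies the same linear equation \eqref{unique_equ_hyp} with the same weights \eqref{weight1}, and Theorem \ref{uniqueness} applies verbatim once both solutions have uniformly bounded curvature. To secure this, note that $K_i(u(0))\le\hat{K}_i$ and $|K_i(u(0))|\le M_0$ force $-M_0\le\hat{K}_i<2\pi$, so $\hat{K}$ is uniformly bounded; combined with the Maximum Principle I bound $\inf_k\widetilde{K}_k(u(0))\le\widetilde{K}_i(u^*(t))\le 0$ this yields a uniform two-sided bound on $K_i(u^*(t))=\widetilde{K}_i+\hat{K}_i$, and the $C^\infty_t$ regularity follows from the smoothness of $K$ in $u$ as in Theorem \ref{longtime_eu}. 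I expect the main obstacle to be the a priori upper bound on the conformal factors in the hyperbolic region: one must verify that the barrier construction of \eqref{hyperconstant} still closes after shifting the target curvature by $\hat{K}$, which is exactly where $\hat{K}_i<2\pi$ is used; everything else is a structural adaptation of Section \ref{sec:3} and Section \ref{sec:4}.
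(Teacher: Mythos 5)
Your proposal is correct and is essentially the proof the paper intends: the paper states that ``all previous results extend'' to the flow \eqref{prescribed}, and your adaptation — running Maximum Principle I on the modified curvature $\widetilde{K}=K-\hat{K}$ (whose evolution has the same structure since $\hat{K}$ is time-independent), rerunning the barrier argument of \eqref{hyperconstant} with $\epsilon=\frac{2\pi-\hat{K}_i}{\deg(i)}>0$ where the hypothesis $\hat{K}_i<2\pi$ enters, and noting that $\hat{K}$ cancels in the difference equation so Theorem \ref{uniqueness} applies verbatim — is exactly that extension. The only cosmetic point is that in your displayed evolution equation for $\widetilde{K}_i$ along the finite Dirichlet flows the sum should run over interior neighbors only, with the boundary-neighbor terms absorbed into the (still non-positive) zeroth-order coefficient, precisely as in the paper's proof of Lemma \ref{max_principle}.
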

\begin{thm}\label{parron}
    For an infinite disk triangulation $\mathcal{T}$ with the intersection angle $\Phi\in[0,\frac{\pi}{2}]^E,$ if there exists a hyperbolic circle-packing metric of $\mathcal{T}$ with discrete conformal factors $u_0$, then for any given $\hat{K}$ satisfying $K(u_0)\le\hat{K}< 2\pi$ pointwise, there exists a discrete conformal factor $u^*$ such that $K(u^*)=\hat{K}$.
\end{thm}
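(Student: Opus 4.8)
The plan is to run the prescribed-curvature flow \eqref{prescribed} starting from the given conformal factor $u_0$ and to invoke the convergence result of Theorem \ref{prescribe_conv}. The existence of a hyperbolic circle-packing metric with conformal factor $u_0$ furnishes an admissible initial datum $u(0)=u_0\in(-\infty,0)^V$, and the assumption $K(u_0)\le\hat K$ is precisely the subsolution hypothesis $K_i(u(0))\le\hat K_i$ required there. Granting Theorem \ref{prescribe_conv}, the flow converges to some $u^*$ with $K(u^*)=\hat K$, which is the assertion; so the whole content reduces to verifying the hypotheses and recalling why the flow converges.

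To see the mechanism, I would first check that the inequality $K(u(t))\le\hat K$ is preserved along the flow, via a maximum principle for the curvature evolution analogous to Lemma \ref{max_principle} applied to the shifted quantity $K_j-\hat K_j$. Once this holds, the flow equation gives $\ddt{u_i}=-(K_i-\hat K_i)\ge 0$, so each $u_i(t)$ is non-decreasing in $t$.

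The crux is a uniform upper barrier keeping each $u_i(t)$ bounded away from $0$, and this is where the strict inequality $\hat K_i<2\pi$ is used. Fixing $j$ and choosing $\epsilon_j>0$ with $\hat K_j\le 2\pi-\epsilon_j$, Proposition \ref{hyp_char} shows that when $u_j$ is close to $0$ the angles $\theta_j^{kl}$ become arbitrarily small, so $K_j$ is forced close to $2\pi$ and in particular $K_j>\hat K_j$; then $\ddt{u_j}<0$ there, producing a barrier $\bar\delta_j<0$ with $u_j(t)\le\bar\delta_j$ for all $t$, exactly as in the claim \eqref{hyperconstant}. Being monotone and bounded above, $u_j(t)$ converges to some $u_j^*(\infty)\in(-\infty,\bar\delta_j]$; since $K_j$ depends only on the finitely many factors indexed by $B_1(j)$, $K_j(u(t))$ converges too.

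Finally I would identify the limit curvature with $\hat K$ by the contradiction used for Theorem \ref{converge_hyp}: if $K_j(\infty)<\hat K_j$ for some $j$, then $\ddt{u_j}\ge\tfrac12(\hat K_j-K_j(\infty))>0$ for large $t$, forcing $u_j$ past its barrier and contradicting boundedness; hence $K(u^*)=\hat K$. The main obstacle is the upper-barrier step: without $\hat K<2\pi$ the factor $u_j$ could run up to $0$ and the metric degenerate, so everything hinges on converting $\hat K_j<2\pi$ into a quantitative lower bound on $K_j$ near the boundary of the configuration space through Proposition \ref{hyp_char}.
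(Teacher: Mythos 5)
Your proposal is correct and follows essentially the same route as the paper: the paper's own proof of Theorem \ref{parron} is a one-line application of Theorem \ref{prescribe_conv}, whose justification (asserted in the paper as an extension of the earlier results) is exactly the argument you reconstruct --- preservation of $K\le\hat{K}$ by the maximum principle for the shifted curvature, monotonicity of $u_i(t)$, the upper barrier from Proposition \ref{hyp_char} using $\hat{K}_i<2\pi$ as in \eqref{hyperconstant}, and identification of the limit curvature by the contradiction from the proof of Theorem \ref{converge_hyp}. You correctly isolate the only place where strictness of $\hat{K}<2\pi$ enters, namely the barrier keeping $u_j$ away from $0$.
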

\begin{proof}
    By Theorem \ref{prescribe_conv}, the prescribed curvature flow \eqref{prescribed} will converge with the initial data $u_0.$ Hence, we find the circle-packing metric with prescribed discrete Gaussian curvatures $\hat{K}$.
\end{proof}

As a direct application, we have the following corollary.
\begin{cor}\label{moduli}
    Let $\mathcal{T}$ be a triangulation with bounded degree. Assume that $\Phi\equiv0.$ Then for each $\hat{K}$ satisfying,  for some constant $\epsilon>0$, 
    \[
        (2-\frac{\deg(i)}{3})\pi+\epsilon<\hat{K_i}<2\pi,\quad \forall i\in V,
    \]
  there exists a discrete conformal factor $u^*$ such that $K(u^*)=\hat{K}$.
\end{cor}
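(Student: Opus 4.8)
The plan is to deduce the statement from Theorem~\ref{parron}, which converts a suitable sub-solution into an exact solution: if $\mathcal{T}$ carries a hyperbolic circle-packing metric with conformal factors $u_0$ satisfying $K(u_0)\le\hat{K}<2\pi$ pointwise, then there is a conformal factor $u^*$ with $K(u^*)=\hat{K}$. Thus it suffices to produce a single hyperbolic circle-packing metric whose curvature is pointwise dominated by $\hat{K}$. My candidate is the constant conformal factor $u^n\equiv -n$: since $\Phi\equiv 0$ and all radii are equal, every face of $\mathcal{T}$ is the \emph{same} equilateral hyperbolic triangle, so all inner angles coincide and the triangle inequalities hold automatically, making $u^n$ a genuine hyperbolic circle-packing metric.

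Next I would control the angles by reusing the Gauss--Bonnet argument from the preceding corollary. As $n\to\infty$ the common side length of these equilateral hyperbolic triangles tends to $0$, so by the angle-defect (area) formula their common inner angle $\theta(n)$ increases to $\pi/3$; in particular $\theta_i^{jk}(u^n)=\theta(n)\to\pi/3$ simultaneously over all faces. Writing $N:=\sup_{i\in V}\deg(i)<\infty$ and fixing $\eta:=\epsilon/N$, choose $n$ large enough that $\theta(n)>\pi/3-\eta$. Then for every vertex $i$,
\begin{align*}
K_i(u^n)&=2\pi-\sum_{\{i,j,k\}\in F}\theta_i^{jk}(u^n)
<2\pi-\deg(i)\Bigl(\tfrac{\pi}{3}-\eta\Bigr)\\
&=\Bigl(2-\tfrac{\deg(i)}{3}\Bigr)\pi+\deg(i)\,\eta
\le\Bigl(2-\tfrac{\deg(i)}{3}\Bigr)\pi+\epsilon,
\end{align*}
where the final inequality uses $\deg(i)\le N$. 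By the hypothesis on $\hat{K}$ the last expression is strictly below $\hat{K}_i$, so $K(u^n)\le\hat{K}$ pointwise. Applying Theorem~\ref{parron} with $u_0=u^n$ then furnishes the desired $u^*$ with $K(u^*)=\hat{K}$.

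The argument is short once Theorem~\ref{parron} is available, and I do not expect a serious obstacle. The one place where the hypotheses are genuinely used is the conversion of the per-face angle slack into a uniform curvature bound: the bounded-degree assumption $N<\infty$ is exactly what allows a single choice of $n$ (through $\eta=\epsilon/N$) to work at every vertex at once. If the degrees were unbounded, high-degree vertices would demand $\theta(n)$ arbitrarily close to $\pi/3$, which no fixed $n$ can achieve since $\theta(n)<\pi/3$ strictly in hyperbolic geometry.
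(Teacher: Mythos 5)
Your proposal is correct and is exactly the argument the paper intends: Corollary~\ref{moduli} is stated as a direct application of Theorem~\ref{parron}, with the sub-solution $u^n\equiv -n$ produced by the same Gauss--Bonnet angle estimate ($\theta_i^{jk}(u^n)\to\pi/3$ uniformly over faces) that the paper uses in the preceding corollary for $\deg(i)\ge 7$. Your use of the bounded-degree hypothesis via $\eta=\epsilon/N$ to get the uniform curvature bound $K(u^n)\le\hat{K}$ is precisely the step the paper leaves implicit, and it is carried out correctly.
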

\begin{rem}
\begin{enumerate}
    \item In Corollary \ref{moduli}, we assume $\Phi\equiv0$ only for the sake of simplicity in our discussion. When $\Phi$ does not equal $0,$ one could also obtain some existence results. We refer the reader to \cite{MR4761889} for some similar discussion for finite circle packings with a general intersection angle $\Phi$.
    \item An interesting result on the existence of hyperbolic circle-packing metric with finite punctures and prescribed cone angles on noncompact surfaces was proved by Bowers and Ruffoni \cite{bowers2025infinite}.
\end{enumerate}
    
\end{rem}
\section{Convergence of the Euclidean CRF on $\mathcal{T}_H$}
\label{sec:5}

The hexagonal lattice $\mathcal{T}_H=(H,E_H,F_H)$ consists of the set of vertices
\[
H=\{v_{m,n}\in\mathbb{R}^2:v_{m,n}=m+ne^{\frac{2\pi}{3}i},~m,n\in\mathbb{Z}\},
\] the set of edges $E_H=\{\{v_{m,n},v_{m',n'}\}:|v_{m,n}-v_{m',n'}|=1\}, $ and the set of faces which are induced by the embedding of $H$ and $E_H$ into $\R^2.$

Let us first introduce some basic definitions on the triangulation $\mathcal{T}_H.$ For $p\in[1,\infty]$, we denote by $l^p(H)$  the $l^p-$space of functions on $H$ with respect to the counting measure, which is a Banach space with the norm

    \begin{eqnarray*}
   \|g\|_{l^p}=\left\{
   \begin{aligned}
   &\Big(\sum_{i\in H}|g_i|^p\Big)^{\frac{1}{p}},~p\in[1,+\infty)\\
   &\sup_{i\in H}|g_i|,~p=\infty.
   \end{aligned}
   \right.
   \end{eqnarray*}

For $l^p$ spaces, one easily obtains the following estimate.
\begin{prop}\label{lp}
    For $1\le p\le q\le+\infty,$ 
    $\|u\|_{l^q}\le\|u\|_{l^p}$.
\end{prop}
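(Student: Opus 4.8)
The plan is to reduce to a normalized situation and then use the fact that, for the counting measure, each single coordinate is dominated by the full $l^p$-norm. First I would dispose of the degenerate cases: if $\|u\|_{l^p}=0$ then $u\equiv 0$ and both sides vanish, and if $\|u\|_{l^p}=+\infty$ there is nothing to prove. Thus I may assume $0<\|u\|_{l^p}<\infty$, and since both norms are positively homogeneous (replacing $u$ by $\lambda u$ scales each side by $|\lambda|$), I normalize so that $\|u\|_{l^p}=1$.

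The crucial observation under this normalization is that $|u_i|\le 1$ for every $i\in H$: for finite $p$ this holds because $|u_i|^p\le\sum_{j\in H}|u_j|^p=1$, and for $p=\infty$ it is immediate from $|u_i|\le\sup_j|u_j|=1$. This is precisely the step that uses the counting measure, since it bounds each individual term by the whole sum. I would then split according to whether $q$ is finite. When $q=\infty$ (which is the only possibility if $p=\infty$, as then $q\ge p=\infty$), the conclusion is immediate from $\|u\|_{l^\infty}=\sup_i|u_i|\le 1=\|u\|_{l^p}$. When $q<\infty$, I invoke the elementary pointwise inequality $|u_i|^q\le|u_i|^p$, valid since $0\le|u_i|\le 1$ and $q\ge p$ (raising a number in $[0,1]$ to a larger exponent decreases it); summing over $i\in H$ yields $\sum_i|u_i|^q\le\sum_i|u_i|^p=1$, and taking $q$-th roots gives $\|u\|_{l^q}\le 1=\|u\|_{l^p}$.

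There is no substantial obstacle here, as the argument is entirely elementary. The only points requiring care are the normalization by homogeneity and the fact that the pointwise estimate $|u_i|^q\le|u_i|^p$ fails without the bound $|u_i|\le 1$; the inequality therefore genuinely relies on the counting-measure normalization and does not hold for general measure spaces. Undoing the scaling finally recovers the stated inequality for arbitrary $u$.
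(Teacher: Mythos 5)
Your proof is correct and complete, including the careful handling of the degenerate cases and of $p=\infty$ versus $q<\infty$. The paper states this proposition without proof (prefacing it only with ``one easily obtains''), and your argument --- normalize to $\|u\|_{l^p}=1$ by homogeneity, observe $|u_i|\le 1$ from the counting measure, apply the pointwise inequality $|u_i|^q\le |u_i|^p$, and sum --- is precisely the standard proof the authors have in mind.
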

The Dirichlet energy of a function $f\in \R^H$ is defined as
$$\mathcal{E}(f):=\sum_{\{i,j\}\in E_H}|f(i)-f(j)|^2.$$
The following proposition for $l^2$ functions on graphs is useful for proving the convergence theorem.
\begin{prop}\label{infinity_converge}
    Let $\{u_n\}_{n=1}^\infty$ be a sequence of $l^2-$functions satisfying 
    $$\|u_n\|_{l^2}\le M,~~\mathcal{E}(u_{n})\rightarrow 0,\quad n\to \infty.$$
    Then 
    $$
    \|u_n\|_{l^\infty}\rightarrow 0,\quad n\to\infty.
    $$
\end{prop}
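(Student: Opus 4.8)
The plan is to argue by contradiction, exploiting that on the hexagonal lattice a function of small Dirichlet energy cannot decay quickly, so a large pointwise value must persist over a large ball and thereby overwhelm the uniform $l^2$ bound. Suppose the conclusion fails. Then there exist $\varepsilon>0$ and, after passing to a subsequence which I relabel, vertices $v_n\in H$ with $|u_n(v_n)|\ge\varepsilon$ for all $n$; such a vertex exists because each $u_n\in l^2(H)$ decays at infinity, so the supremum defining $\|u_n\|_{l^\infty}$ is attained. Write $\delta_n:=\mathcal{E}(u_n)$, so that $\delta_n\to0$ by hypothesis.

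The key step is a path estimate. For any $w$ with $d(v_n,w)=d\le R$ I would fix a geodesic $v_n=x_0,x_1,\dots,x_d=w$; since the edges of a geodesic are distinct, telescoping and Cauchy--Schwarz give
\[
|u_n(v_n)-u_n(w)|\le\sum_{k=0}^{d-1}|u_n(x_k)-u_n(x_{k+1})|\le\sqrt{d}\,\Big(\sum_{k=0}^{d-1}|u_n(x_k)-u_n(x_{k+1})|^2\Big)^{1/2}\le\sqrt{R\,\delta_n},
\]
the last inequality because the distinct edges along the path contribute to $\mathcal{E}(u_n)$. Hence every $w\in B_R(v_n)$ satisfies $|u_n(w)|\ge\varepsilon-\sqrt{R\delta_n}$: the large value at $v_n$ is inherited, up to a small error, by the whole ball of radius $R$.

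Next I would use the growth of balls in $\mathcal{T}_H$. As the hexagonal lattice is vertex-transitive, $|B_R(v)|$ is independent of the center, call it $\beta(R)$, and $\beta(R)\to\infty$ as $R\to\infty$ since the graph is infinite and connected. Given $\varepsilon$ and the bound $M$, I would \emph{first} fix $R$ so large that $\beta(R)\,\varepsilon^2/2>M^2$, and \emph{then} choose $N$ with $\sqrt{R\delta_n}\le\varepsilon(1-2^{-1/2})$ for all $n\ge N$, which is possible because $R$ is now fixed and $\delta_n\to0$. For such $n$ we get $|u_n(w)|\ge\varepsilon/\sqrt2$ on $B_R(v_n)$, and therefore
\[
M^2\ge\|u_n\|_{l^2}^2\ge\sum_{w\in B_R(v_n)}|u_n(w)|^2\ge\beta(R)\,\frac{\varepsilon^2}{2}>M^2,
\]
a contradiction. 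This forces $\|u_n\|_{l^\infty}\to0$.

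The Cauchy--Schwarz path bound and the ball-counting on the triangular lattice are routine. The one point genuinely requiring care, and what I expect to be the main obstacle to a clean write-up, is the \emph{order of quantifiers}: $R$ must be chosen first, depending only on $\varepsilon$ and $M$, and the threshold $N$ only afterwards, depending on $R$. This ordering is exactly what makes the final contradiction uniform in $n$; getting it backwards would let the ball on which $u_n$ stays large shrink as $n$ grows and destroy the argument. A secondary detail is ensuring the path chosen is a geodesic, so that its edges are distinct and the squared differences along it are dominated by $\mathcal{E}(u_n)$.
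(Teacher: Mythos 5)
Your proof is correct and follows essentially the same route as the paper: a contradiction argument that propagates the large value $|u_n(v_n)|\ge\varepsilon$ over a ball around $v_n$ using the smallness of the Dirichlet energy, then contradicts the uniform $l^2$ bound by counting the ball's volume, with the same crucial quantifier ordering (ball radius fixed before the threshold in $n$). The only cosmetic difference is your Cauchy--Schwarz estimate along geodesics, giving the slightly sharper bound $\sqrt{R\delta_n}$, where the paper simply uses the per-edge bound $\sup_{\{i,j\}\in E_H}|u_n(i)-u_n(j)|\le\sqrt{\mathcal{E}(u_n)}$ multiplied by the path length.
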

\begin{proof}
    We prove the proposition by contradiction. If it is not true, then there exists a subsequence $u_{n_k}$ and corresponding vertices $v_{n_k}\in H$ such that 
    \[
    |u_{n_k}(v_{n_k})|>\epsilon,
    \]
    for some constant $\epsilon>0.$ Since $\mathcal{E}(u_{n})\rightarrow 0,$ $$\sup_{\{i,j\}\in E_H}|u_n(i)-u_n(j)|\to 0,\quad n\to \infty.$$ In particular, for each $N>0$, there exists a sufficiently large $K=K(N)$ such that 
    $$
   \sup_{\{i,j\}\in E_H}|u_{n_k}(i)-u_{n_k}(j)|\leq \frac{\epsilon}{2N},~\forall k\ge K.
    $$
    Therefore, $$\|u_{n_k}\|_{l^2}\ge\|u_{n_k}\|_{l^2(B_N(v_{n_k}))} \ge\frac{N\epsilon}{2}.$$
    Since $N$ can be arbitrarily large, this leads to a contradiction.
\end{proof}

\subsection{Discrete Laplacian and the analysis on the hexagonal lattice}
For a function $u\in \R^H,$ we write $u_{m,n}=u(v_{m,n})$ for $m,n\in \Z$ for simplicity. We denote by $\mathcal{D}=\{D_i\}_{i=1}^6$ the set of \textbf{basic difference operators} on $\mathbb{R}^H$ as
\begin{eqnarray*}
   \left\{
   \begin{aligned}
   &        D_1 u_{m,n}=u_{m+1,n}-u_{m,n} , ~D_4 u_{m,n}=-D_1u_{m-1,n},\label{short_range}\\
   &
   D_2 u_{m,n}=u_{m+1,n+1}-u_{m,n} ,~ D_5 u_{m,n}=-D_2u_{m-1,n-1},\\& D_3 u_{m,n}=u_{m,n+1}-u_{m,n} ,~D_6 u_{m,n}=-D_3u_{m,n-1}.
   \end{aligned}
   \right.
   \end{eqnarray*}
   We write $Du:H\to \R^6$ as $$D u_{m,n}=(D_1u_{m,n},D_2u_{m,n},D_3u_{m,n},D_4u_{m,n},D_5u_{m,n},D_6u_{m,n}).$$
We denote by $D^2u:H\to M_{6\times 6}$ the Hessian of a function, a matrix-valued map, given by $$D^2u_{m,n}:=(D_iD_ju_{m,n})_{i,j=1}^6.$$
   
  For any $p\in [1,\infty),$ we define 
  $$\|Du\|_{l^p}^p:=\frac12\sum_{m,n\in \Z}\sum_{D_i\in \mathcal{D}}|D_i u_{m,n}|^p.$$
  Then one verifies that  $\mathcal{E}(u)=\|D u\|_{l^2}.$
   We set
   $$\|D^2u\|_{l^2}^2:=\sum_{m,n\in\mathbb{Z}}\sum_{D_i,D_j\in \mathcal{D}}(D_iD_ju_{m,n})^2.$$
   One easily checks that
   \begin{align}\label{energyform2}
       \|D^2u\|^2_{l^2}=2\sum_{D_i\in\mathcal{D}}\|D(D_iu)\|_{l^2}^2.
   \end{align}
Since every vertex in $H$ incident to exactly $6$ edges in $E_H,$ there exists a universal constant $C_1$ such that 
\begin{align}\label{sob}
\|D g\|_{l^2}^2\le C_1\|g\|_{l^2}^2,\|D^2 g\|_{l^2}^2\le C_1\|g\|_{l^2}^2,~~\forall g\in l^2(H).
\end{align}

On the hexagonal lattice, we consider the discrete Laplacian of constant weight, i.e. there exists $w_0>0$ such that $w_{ij}=w_0$ for any $\{i,j\}\in E_H$. By \eqref{eq:lap}, \begin{align}\label{laplace}\Delta_{\omega}=\omega_0\sum_{i=1}^6 D_i.\end{align} Note that two basic difference operators on $\mathcal{T}_H$ commute, i.e. for any $D_i,D_j\in \mathcal{D},$ 
\begin{align*}
D_iD_ju=D_iD_ju,~~\forall u\in\mathbb{R}^H.
\end{align*}
 Hence, the discrete Laplacian of constant weight commutes with any basic difference operator.
Assuming that $\omega\equiv\omega_0$ is a constant weight on $\mathcal{T}_H.$
For any $f,g\in l^2(H)$, the inner product of $f$ and $g$ given by
\[
(f,g)=\sum_{m,n\in\mathbb{Z}}f_{m,n}g_{m,n}.
\]
We have the following Green formula for $l^2$ functions on the hexagonal lattice,

 \begin{align*}
     (f,\Delta_{\omega} g)=-\omega_0\sum_{\{i,j\}\in E_H}(f(i)-f(j))(g(i)-g(j)),\forall f,g\in l^2(H).
 \end{align*}
 In particular, for $f\in l^2(H),$
 \begin{align*}
   (f,\Delta_{\omega} f)=-\omega_0\|D f\|_{l^2}^2.  
 \end{align*}

\subsection{The Euclidean CRF as a semilinear parabolic equation}
We focus on the flow \eqref{infinite_flow} in Euclidean background geometry. Let $\{i,j,k\}$ be a face of $\mathcal{T}_H.$ Note that $\theta_i^{jk}$ is a function depending on $u_i,u_j$ and $u_k,$ and $\theta_i^{jk}=G(u_j-u_i,u_k-u_i),$ with
\[
G(x,y)=\arccos\frac{(1+e^x)^2+(1+e^y)^2-(e^x+e^y)^2}{2(1+e^x)(1+e^y)}.
\]
Therefore, $K_i(u)=2\pi-\sum_{\{i,j,k\}\in F_H}G(u_k-u_i,u_j-u_i).$ Since when $u\equiv 0$ all triangles of $\mathcal{T}_H$ are equilateral triangles,  $G(0,0)=\frac{\pi}{3}.$
Thus, the flow \eqref{infinite_flow} can be written as, for any $i\in H$ and $t\geq 0,$ 
\begin{align}
    \ddt{u_i}&=-2\pi+\sum_{\{i,j,k\}\in F_H}G(u_k-u_i,u_j-u_i)\nonumber
    \\&=\sum_{\{i,j,k\}\in F_H}[G_x(0,0)(u_j-u_i)+G_y(0,0)(u_k-u_i)+\tilde{F}(u_j-u_i,u_k-u_i)],\nonumber
    \\&=\Delta_{\omega_*} u_i+F(D u)(i)\label{form}
\end{align}
where $\tilde{F}(x,y)=G(x,y)-\frac{\pi}{3}-G_x(0,0)x-G_y(0,0)y,$ and \begin{equation}\label{eq:defF}
    F(D u)(i)=\sum_{\{i,j,k\}\in F_H}\tilde{F}(u_j-u_i,u_k-u_i).
\end{equation}
In fact, one defines $F:\R^6\to\R$ as
$F(z)=\sum_{i=1}^6 \tilde{F}(z_i,z_{i+1})$ with $z=(z_1,\cdots,z_6)$ and $z_7:=z_1.$
Then $F(Du)$ is the composition of $F$ and $Du.$
Since $G$ is a smooth function of $x$ and $y,$ $F(0)=0,~\nabla {F}(0)=0.$ Therefore, we have the following proposition.
\begin{prop}\label{prop:Fest}
    There exists universal constants $\epsilon_1,C_1>0$ such that for any $|z|\leq \epsilon_1,$
    \begin{equation}\label{eq:Fest}
        |F(z)|\leq C_1|z|^2,\quad |\nabla F(z)|\leq C_1|z|.
    \end{equation}
\end{prop}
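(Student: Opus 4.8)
The plan is to derive both estimates directly from Taylor's theorem, using that $F$ is smooth near the origin together with the vanishing conditions $F(0)=0$ and $\nabla F(0)=0$ already recorded in the text. The only point that genuinely requires verification is that $F$ is truly $C^2$ (indeed analytic) on a full neighborhood of $0\in\R^6$; everything else is a routine consequence of the mean value / second-order Taylor formulas.

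First I would check the regularity of $G$ at the origin. Recall $G(x,y)=\arccos\big(\phi(x,y)\big)$, where $\phi$ is the rational expression in $e^x,e^y$ appearing in the definition of $\theta_i^{jk}$. At $(x,y)=(0,0)$ the triangle is equilateral, so $\phi(0,0)=\cos\tfrac{\pi}{3}=\tfrac12\in(-1,1)$. Since $\phi$ is analytic and $\arccos$ is analytic on $(-1,1)$, the composition $G$ is analytic on a ball $\{|(x,y)|<2\rho\}$ for some $\rho>0$. Consequently $\tilde F(x,y)=G(x,y)-\tfrac{\pi}{3}-G_x(0,0)x-G_y(0,0)y$ is analytic there, and since each summand $\tilde F(z_i,z_{i+1})$ in $F(z)=\sum_{i=1}^6\tilde F(z_i,z_{i+1})$ is analytic for $|z|<\rho$, the map $F$ is analytic on $\{|z|<\rho\}\subset\R^6$; in particular $F\in C^2(\{|z|<\rho\})$. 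I would then note the vanishing conditions: by construction $\tilde F(0,0)=G(0,0)-\tfrac{\pi}{3}=0$ and $\nabla\tilde F(0,0)=0$, whence $F(0)=0$ and $\nabla F(0)=0$.

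Finally I would invoke Taylor's formula with integral remainder. Fix $\epsilon_1:=\rho/2$ and set $M:=\sup_{|z|\le\epsilon_1}\|\nabla^2 F(z)\|$, which is finite because $\nabla^2 F$ is continuous on the compact ball $\{|z|\le\epsilon_1\}$. Since $\nabla F(0)=0$, the fundamental theorem of calculus along the segment $t\mapsto tz$ gives
\[
\nabla F(z)=\int_0^1 \nabla^2 F(tz)\,z\,\mathrm{d}t,
\]
so $|\nabla F(z)|\le M|z|$ for all $|z|\le\epsilon_1$. Using in addition $F(0)=0$, the second-order Taylor formula yields
\[
F(z)=\int_0^1(1-t)\,\langle z,\nabla^2 F(tz)\,z\rangle\,\mathrm{d}t,
\]
hence $|F(z)|\le\tfrac12 M|z|^2\le M|z|^2$. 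Taking $C_1:=M$ gives both inequalities in \eqref{eq:Fest}, and these constants are universal since they depend only on the fixed hexagonal combinatorics and on the analytic function $G$, not on any vertex or on the data.

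The argument is essentially mechanical once regularity is in hand, so I do not expect a serious obstacle. The one step that must be handled with care is the analyticity of $G$ at the origin, i.e. confirming that $\phi(0,0)=\tfrac12$ lies strictly inside $(-1,1)$ so that $\arccos$ is smooth there and the composition—and thus $F$—is $C^2$ on a full neighborhood of $0$; this is what legitimizes the use of the Taylor expansions above.
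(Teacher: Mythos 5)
Your proof is correct and follows essentially the same route as the paper, which derives the estimates directly from the smoothness of $G$ (hence of $F$) together with $F(0)=0$ and $\nabla F(0)=0$; you have simply made explicit the routine Taylor-with-integral-remainder details and the verification that $\phi(0,0)=\tfrac12\in(-1,1)$, which the paper leaves implicit. No gaps.
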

This yields that $F(D u)(i)=O(|D u(i)|^2),$ when $|D u|$ is small enough. 
Besides, since $G(x,y)$ is symmetric with respect to $x$ and $y$, one has $G_x(0,0)=G_y(0,0).$ Therefore, in \eqref{form}, the weight function $\omega_*$ is given by 
\[
(\omega_*)_{ij}=\omega_0= 2G_x(0,0)=\frac{\sqrt{3}}{3},
\]
which is a constant weight on $\mathcal{T}_H$.

By the discussion above, the CRF  on the hexagonal triangulation in Euclidean background geometry is reduced to the following equation \eqref{semi} with initial value $\phi,$ which is analogous to a semilinear evolution equation. We will prove Theorem \ref{mainthm} by studying asymptotic behaviors of the solutions to such an equation.
\begin{thm}\label{cauchy_problem}
There is a universal constant $\epsilon_0>0$  such that for any $\|\phi\|_{l^2}\leq \epsilon_0,$  the Cauchy problem of the semilinear parabolic equation 
\begin{align}\label{semi}
\begin{cases}
  \ddt{u_i}-\Delta_{\omega_*} u_i=F(D u)(i),~\forall i\in V,\\
    u(0)=\phi,
\end{cases}
\end{align} where $F(Du)$ is defined in \eqref{eq:defF}, 
    admits a unique global solution $u\in L^\infty([0,\infty);l^2(H))$ such that 
\begin{align}\label{decay}
       \EE(u(t))\rightarrow 0,\quad t\to\infty.
    \end{align}
\end{thm}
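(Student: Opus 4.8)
The plan is to treat \eqref{semi} as an autonomous ODE in the Hilbert space $l^2(H)$, namely $\dot u=\Delta_{\omega_*}u+\mathcal{F}(u)$ with $\mathcal{F}(u):=F(Du)$, and to run the standard scheme local existence $\Rightarrow$ energy estimate $\Rightarrow$ global existence and decay. First I would record that $\Delta_{\omega_*}$ is a bounded self-adjoint operator on $l^2(H)$ (since $\mathcal{T}_H$ has degree $6$, so $\|\Delta_{\omega_*}u\|_{l^2}\lesssim\|u\|_{l^2}$), and that $\mathcal{F}$ sends small $l^2$-balls into $l^2(H)$ and is locally Lipschitz there. Indeed, combining \eqref{sob} with the elementary inequality $\max_i|Du(i)|\le\big(\sum_i|Du(i)|^2\big)^{1/2}=\sqrt2\,\|Du\|_{l^2}\le\sqrt{2C_1}\,\|u\|_{l^2}$ shows that $Du(i)$ lies uniformly in a small ball of $\R^6$ once $\|u\|_{l^2}$ is small; on such a ball the analytic $F$ is Lipschitz and satisfies $|F|\lesssim|\cdot|^2$, $|\nabla F|\lesssim|\cdot|$ by Proposition~\ref{prop:Fest}. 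A Banach fixed-point argument (Picard iteration), as already invoked for \eqref{finite_flow}, then gives a unique local solution in $C([0,\tau];l^2(H))$.

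Second, the heart of the matter is an $l^2$ energy estimate that both closes a bootstrap for global existence and drives the decay. Differentiating and using the Green formula $(u,\Delta_{\omega_*}u)=-\omega_0\|Du\|_{l^2}^2=-\omega_0\mathcal{E}(u)$ yields
\begin{align*}
\frac{\mathrm d}{\mathrm dt}\|u\|_{l^2}^2=-2\omega_0\mathcal{E}(u)+2\big(u,\mathcal{F}(u)\big).
\end{align*}
The nonlinear term is absorbed by the dissipation: since $\max_i|Du(i)|^2\le\sum_i|Du(i)|^2=2\mathcal{E}(u)$, Proposition~\ref{prop:Fest} gives $\|\mathcal{F}(u)\|_{l^2}\le C\mathcal{E}(u)$, hence $(u,\mathcal{F}(u))\le C\|u\|_{l^2}\mathcal{E}(u)$. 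Choosing $\epsilon_0$ small enough that $\max_i|Du(i)|\le\epsilon_1$ and $C\epsilon_0\le\omega_0/2$, one obtains $\frac{\mathrm d}{\mathrm dt}\|u\|_{l^2}^2\le-\omega_0\mathcal{E}(u)\le0$ whenever $\|u(t)\|_{l^2}\le\epsilon_0$. A continuity/bootstrap argument then preserves $\|u(t)\|_{l^2}\le\|\phi\|_{l^2}\le\epsilon_0$ for all $t$, excludes $l^2$-blow-up, and hence extends the local solution to a global $u\in L^\infty([0,\infty);l^2(H))$; integrating also gives $\omega_0\int_0^\infty\mathcal{E}(u(t))\,\mathrm dt\le\|\phi\|_{l^2}^2<\infty$.

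Third, to upgrade this integrability to the pointwise decay \eqref{decay}, I would show $t\mapsto\mathcal{E}(u(t))$ has uniformly bounded derivative. Writing $\mathcal{E}(u)=-\tfrac1{\omega_0}(u,\Delta_{\omega_*}u)$ and using self-adjointness,
\begin{align*}
\frac{\mathrm d}{\mathrm dt}\mathcal{E}(u)=-\frac{2}{\omega_0}\big(\Delta_{\omega_*}u+\mathcal{F}(u),\,\Delta_{\omega_*}u\big),
\end{align*}
so $\big|\tfrac{\mathrm d}{\mathrm dt}\mathcal{E}(u)\big|\lesssim\|\Delta_{\omega_*}u\|_{l^2}^2+\|\mathcal{F}(u)\|_{l^2}\|\Delta_{\omega_*}u\|_{l^2}$, and both summands stay bounded because $\|\Delta_{\omega_*}u\|_{l^2}\lesssim\|u\|_{l^2}\le\epsilon_0$ and $\|\mathcal{F}(u)\|_{l^2}\le C\mathcal{E}(u)\lesssim\epsilon_0^2$. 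A nonnegative integrable function with bounded derivative must tend to $0$, so $\mathcal{E}(u(t))\to0$, which is \eqref{decay}; Proposition~\ref{infinity_converge} then even yields $\|u(t)\|_{l^\infty}\to0$. Uniqueness in $L^\infty([0,\infty);l^2(H))$ follows by Gronwall: any two such solutions remain in a fixed $l^2$-ball on which $\mathcal{F}$ is Lipschitz (with constant $L$), and their difference $w$ satisfies $\frac{\mathrm d}{\mathrm dt}\|w\|_{l^2}^2=2(w,\Delta_{\omega_*}w)+2(w,\mathcal{F}(u)-\mathcal{F}(\hat u))\le 2L\|w\|_{l^2}^2$ with $w(0)=0$, forcing $w\equiv0$.

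The step I expect to be the main obstacle is the energy estimate in the second paragraph, precisely the absorption of $(u,\mathcal{F}(u))$ into the dissipation $-\omega_0\mathcal{E}(u)$; this is exactly where the special structure $F(0)=0,\ \nabla F(0)=0$ of Proposition~\ref{prop:Fest} and the fact that the nonlinearity depends only on the differences $Du$ --- so that $\mathcal{F}(u)$ is quadratically small and estimable by $\mathcal{E}(u)$ rather than by $\|u\|_{l^2}$ --- are indispensable. A second delicate point is the passage from $\int_0^\infty\mathcal{E}<\infty$ to $\lim_{t\to\infty}\mathcal{E}=0$, for which the uniform bound on $\frac{\mathrm d}{\mathrm dt}\mathcal{E}(u)$ obtained from the commutation of $\Delta_{\omega_*}$ with the difference operators is the needed ingredient.
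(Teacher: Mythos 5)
Your proposal is correct, and its skeleton coincides with the paper's: a small-data fixed point for local existence, the dissipation estimate $\tfrac{\mathrm{d}}{\mathrm{d}t}\|u\|_{l^2}^2+\omega_0\EE(u)\le 0$ obtained by absorbing $(u,F(Du))$ into the dissipation via $\|F(Du)\|_{l^2}\le C\EE(u)$, hence $\int_0^\infty\EE(u(t))\,\mathrm{d}t<\infty$, then a uniform bound on $\tfrac{\mathrm{d}}{\mathrm{d}t}\EE(u)$ and a Barbalat-type conclusion (the paper's Lemmas \ref{l2}, \ref{energy_etimate} and \ref{Barbalat's Lemma}). Three implementation details differ, each legitimately. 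First, you run Picard for the ODE $\dot u=\Delta_{\omega_*}u+F(Du)$ in $l^2(H)$, exploiting that $\Delta_{\omega_*}$ is a bounded operator and that $u\mapsto F(Du)$ is locally Lipschitz (since $\sup_i|Du(i)|\le\sqrt{2}\,\|Du\|_{l^2}$ confines $Du$ to a compact subset of $\mathbb{R}^6$), and you globalize by a continuation/blow-up alternative; the paper instead contracts the Duhamel map $Su(t)=P_tu_0+\int_0^tP_{t-s}F(Du(s))\,\mathrm{d}s$ on $C([0,1];l^2(H))$ and restarts on each $[n,n+1]$ using the monotonicity of $\|u(t)\|_{l^2}$ from Lemma \ref{l2}. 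Since the generator is bounded these are equivalent, and yours is the more elementary packaging. Second — the genuinely different step — you bound $\tfrac{\mathrm{d}}{\mathrm{d}t}\EE(u)$ by writing $\EE(u)=-\tfrac{1}{\omega_0}(u,\Delta_{\omega_*}u)$ and using self-adjointness, so that $\bigl|\tfrac{\mathrm{d}}{\mathrm{d}t}\EE(u)\bigr|\le C\bigl(\|\Delta_{\omega_*}u\|_{l^2}^2+\|F(Du)\|_{l^2}\|\Delta_{\omega_*}u\|_{l^2}\bigr)$ is controlled by $\|u\|_{l^2}\le\epsilon_0$ alone; the paper instead differentiates the equation by each $D_i$ (using $[D_i,\Delta_{\omega_*}]=0$), sums over $\mathcal{D}$ to get $\bigl|\tfrac{\mathrm{d}}{\mathrm{d}t}\EE(u)+\tfrac{\omega_0}{2}\|D^2u\|_{l^2}^2\bigr|\le C\EE(u)\|D^2u\|_{l^2}$, and invokes \eqref{sob}. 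Your route is shorter, and, contrary to your closing remark, it does not actually need the commutation of $\Delta_{\omega_*}$ with the difference operators — that is an ingredient of the paper's variant, not of yours. Third, your Gronwall argument proves uniqueness in the full class of bounded $l^2$-valued solutions, which is slightly stronger and more explicit than the paper, where uniqueness is what the contraction yields inside the small ball $X$. All your quantitative steps check against the paper's normalizations (note $\EE(u)=\|Du\|_{l^2}^2$, so $\sum_i|Du(i)|^2=2\EE(u)$, exactly as you used).
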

\begin{rem}

In the continuous case, the existence and uniqueness result similar to Theorem \ref{mainthm} for Cauchy problem of nonlinear evolution equation in $\mathbb{R}^n$ are studied; see \cite[Theorem 5.2.2]{zheng2004nonlinear}.

\end{rem}

For proving Theorem \ref{cauchy_problem}, we first establish energy estimates for the equation \eqref{semi}.
\begin{lem}\label{l2}
    There exists a constant $\epsilon_2$ such that for any $T>0$ if $u(t)$ is a solution to the equation \eqref{cauchy_problem} with
    \begin{align*}
        u(t)\in C^1([0,T];l^2(H)),~\sup_{t\in[0,T]}\|u(t)\|_{l^2}\le \epsilon_2,
    \end{align*}
    then 
    $$\ddt{}\|u(t)\|_{l^2}^2+w_0\EE(u(t))\le 0.$$ Moreover, $\|u(t)\|_{l^2}$ is non-increasing and $\int_0^\infty\EE(u(t))dt<\infty.$
\end{lem}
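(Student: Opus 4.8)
The plan is to test equation \eqref{semi} against the solution $u$ in the Hilbert space $l^2(H)$ and to absorb the nonlinear contribution into the dissipative Dirichlet term once $\|u\|_{l^2}$ is small. Since $u\in C^1([0,T];l^2(H))$, the Hilbert-space product rule gives $\frac12\ddt{}\|u(t)\|_{l^2}^2=(u(t),\ddt{}u(t))$; inserting \eqref{semi} together with the Green formula $(u,\Delta_{\omega_*}u)=-\omega_0\|Du\|_{l^2}^2=-\omega_0\EE(u)$ recorded above yields the energy identity
\[
\frac12\ddt{}\|u(t)\|_{l^2}^2=-\omega_0\,\EE(u(t))+\big(u(t),F(Du(t))\big),
\]
where $\omega_0$ is the constant weight ($=w_0$ in the statement). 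The lemma then reduces to showing the nonlinear term is a controlled fraction of $\omega_0\EE(u)$.

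For the nonlinear estimate I would first record the pointwise smallness of the gradient: since each $D_k u(i)$ is a difference of two values of $u$, one has $|Du(i)|\le 2\sqrt6\,\|u\|_{l^\infty}\le 2\sqrt6\,\|u\|_{l^2}$ by Proposition~\ref{lp}, so choosing $\epsilon_2\le \epsilon_1/(2\sqrt6)$ forces $|Du(i)|\le\epsilon_1$ for every $i$ and all $t\in[0,T]$. Proposition~\ref{prop:Fest} then applies termwise, giving $|F(Du)(i)|\le C_1|Du(i)|^2$; since $\sum_i|Du(i)|^2=2\EE(u)<\infty$ (using \eqref{sob}), the series $F(Du)$ lies in $l^1(H)$, so $(u,F(Du))$ converges absolutely and agrees with its termwise sum. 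Pulling out an $l^\infty$ factor and recalling $\sum_i|Du(i)|^2=2\EE(u)$ yields
\[
\big|(u,F(Du))\big|\le C_1\|u\|_{l^\infty}\sum_i|Du(i)|^2=2C_1\|u\|_{l^\infty}\,\EE(u)\le 2C_1\epsilon_2\,\EE(u).
\]

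Combining the two displays gives $\frac12\ddt{}\|u(t)\|_{l^2}^2\le-(\omega_0-2C_1\epsilon_2)\EE(u(t))$, so imposing in addition $\epsilon_2\le \omega_0/(4C_1)$ makes $\omega_0-2C_1\epsilon_2\ge\omega_0/2$ and hence $\ddt{}\|u(t)\|_{l^2}^2+\omega_0\EE(u(t))\le0$, the asserted differential inequality. In particular $\ddt{}\|u\|_{l^2}^2\le0$, so $\|u(t)\|_{l^2}$ is non-increasing; and integrating over $[0,T]$ gives $\omega_0\int_0^T\EE(u(t))\,dt\le\|u(0)\|_{l^2}^2-\|u(T)\|_{l^2}^2\le\|u(0)\|_{l^2}^2$, a bound uniform in $T$, whence $\int_0^\infty\EE(u(t))\,dt\le\omega_0^{-1}\|u(0)\|_{l^2}^2<\infty$. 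Thus $\epsilon_2:=\min\{\epsilon_1/(2\sqrt6),\,\omega_0/(4C_1)\}$ works.

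I expect the only genuinely delicate point to be the rigorous handling of the infinite sums on $H$: justifying the Hilbert-space product rule for $\ddt{}\|u\|_{l^2}^2$ and the absolute convergence of $(u,F(Du))$, so that differentiation and summation may be interchanged, which on a finite graph would be automatic. The $C^1([0,T];l^2(H))$ regularity together with the bounded-degree bound \eqref{sob} is exactly what legitimizes these interchanges, and the remaining work is the routine smallness absorption above.
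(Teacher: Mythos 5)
Your proposal is correct and follows essentially the same route as the paper: test \eqref{semi} against $u$, use the Green formula to produce the energy identity $\frac12\ddt{}\|u\|_{l^2}^2+\omega_0\EE(u)=(F(Du),u)$, and absorb the nonlinearity into the Dirichlet term via Proposition~\ref{prop:Fest} once $\epsilon_2$ is small, then integrate. The only cosmetic difference is the H\"older pairing for the nonlinear term --- you bound $|(u,F(Du))|\le \|u\|_{l^\infty}\|F(Du)\|_{l^1}\le 2C_1\epsilon_2\EE(u)$, while the paper uses $\|F(Du)\|_{l^2}\|u\|_{l^2}$ with $\|F(Du)\|_{l^2}\le C_2\|Du\|_{l^4}^2\le C_2\EE(u)$ --- and your accounting of the vector-norm constants (the factor $2\sqrt6$ ensuring $|Du(i)|\le\epsilon_1$, and the final bound $\omega_0^{-1}\|u(0)\|_{l^2}^2$) is in fact slightly more careful than the paper's.
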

\begin{proof}
Since $u(t)\in C([0,T];l^2(H)),$ multiplying $u$ on both sides of \eqref{semi} and summing over vertices, we have
\begin{align}
&\frac{1}{2}\ddt{\|u\|_{l^2}^2}+\omega_0\EE(u)=(F(Du),u)\label{homo2}
\end{align}
Choose $\epsilon_2\leq \frac{\epsilon_1}{2},$ where $\epsilon_1$ is given in Proposition \ref{prop:Fest}.

Since  $\|u\|_{l^\infty}\leq \|u\|_{l^2}\le\epsilon_2$, we have for some constant $C_2,$
    \begin{align}\label{small_energy}
    \|F(Du)\|_{l^2}\le C_2\|Du\|_{l^4}^2\le C_2\EE(u).
    \end{align}
    Let $\epsilon_2:=\min\{\frac{\omega_0}{2C_2},\frac{\epsilon_1}{2}\}.$
    Therefore, for $t\geq 0,$ 
    \begin{align*}
        \frac{1}{2}\ddt{\|u\|_{l^2}^2}+\omega_0\EE(u)&\le\|F(Du)\|_{l^2}\|u\|_{l^2}\nonumber\\&\le\frac{\omega_0}{2}\EE(u).
    \end{align*}
    This proves the energy estimate. Then it is obvious that 
    $\|u(t)\|_{l^2}$ is non-increasing and $$\int_0^\infty\EE(u(t))dt\leq \|u(0)\|_{l^2}<\infty.$$ This proves the lemma.
\end{proof}
In addition, we have the following lemma.
\begin{lem}\label{energy_etimate}
    There exists a constant $\epsilon_2$ such that if $u(t)$ is a solution to the equation \eqref{cauchy_problem} with
    \begin{align*}
        u(t)\in C^1([0,\infty);l^2(H)),~\sup_{t\in[0,\infty)}\|u(t)\|_{l^2}\le \epsilon_2,
    \end{align*}
  Then $\ddt{\EE(u)}$ exists and is uniformly bounded in $[0,\infty)$.
    
\end{lem}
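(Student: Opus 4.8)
The plan is to differentiate $\EE(u(t))=\|Du(t)\|_{l^2}^2$ directly and to rewrite the result in a form that is controlled solely by powers of $\EE(u)$, which is itself uniformly bounded under the smallness hypothesis. First I would settle existence of $\ddt{\EE(u)}$. Since each basic difference operator is bounded on $l^2(H)$ by \eqref{sob}, the map $D$ sends $C^1([0,\infty);l^2(H))$ into $C^1([0,\infty);l^2)$, so $t\mapsto Du(t)$ is $C^1$ with $\ddt{(Du)}=D\dot u$. As $v\mapsto\|v\|_{l^2}^2$ is a (smooth) quadratic form on the Hilbert space, the chain rule gives that $t\mapsto\EE(u(t))$ is $C^1$ with
\[
\ddt{\EE(u)}=2(Du,D\dot u).
\]
This handles the interchange of differentiation with the infinite edge sum, which is the only genuinely delicate point in proving existence.

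Next I would rewrite the right-hand side using the Green formula and self-adjointness. Because the weight is constant, $\Delta_{\omega_*}$ is self-adjoint on $l^2(H)$ and $(Df,Dg)=-\frac{1}{\omega_0}(f,\Delta_{\omega_*}g)$, whence
\[
\ddt{\EE(u)}=-\frac{2}{\omega_0}(\dot u,\Delta_{\omega_*}u).
\]
Substituting the equation \eqref{semi}, namely $\dot u=\Delta_{\omega_*}u+F(Du)$, produces
\[
\ddt{\EE(u)}=-\frac{2}{\omega_0}\|\Delta_{\omega_*}u\|_{l^2}^2-\frac{2}{\omega_0}\bigl(F(Du),\Delta_{\omega_*}u\bigr).
\]

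Finally I would estimate the two terms. From $\Delta_{\omega_*}=\omega_0\sum_{i=1}^6 D_i$ in \eqref{laplace} together with $\|D_iu\|_{l^2}^2\le 2\EE(u)$, one gets $\|\Delta_{\omega_*}u\|_{l^2}^2\le C\EE(u)$ for a universal $C$. For the cross term, Cauchy--Schwarz combined with the bound $\|F(Du)\|_{l^2}\le C_2\EE(u)$ already established in \eqref{small_energy} (which rests on Proposition \ref{lp} and Proposition \ref{prop:Fest}) yields $|(F(Du),\Delta_{\omega_*}u)|\le C_2\EE(u)\cdot C\sqrt{\EE(u)}$. Both bounds are powers of $\EE(u)$, and by \eqref{sob} one has $\EE(u(t))=\|Du(t)\|_{l^2}^2\le C_1\|u(t)\|_{l^2}^2\le C_1\epsilon_2^2$ uniformly in $t$. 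Hence $|\ddt{\EE(u)}|$ is bounded by a constant depending only on $\omega_0,C_1,C_2$ and $\epsilon_2$, uniformly on $[0,\infty)$, which is exactly the assertion. I do not expect a serious obstacle here: the lemma is mild once the right rewriting is used, and the only things to be careful about are bookkeeping --- verifying that each $l^2$ quantity appearing (in particular $\Delta_{\omega_*}u$ and $F(Du)$) is finite and that the self-adjoint manipulation is legitimate, all of which follow from the bounded-degree estimate \eqref{sob} and the smallness assumption $\|u\|_{l^2}\le\epsilon_2$. I note in passing that the same conclusion can be reached by applying each $D_k$ to \eqref{semi}, using the commutativity of $D_k$ with $\Delta_{\omega_*}$ and the identity \eqref{energyform2}, which would instead surface the second-order energy $\|D^2u\|_{l^2}^2$; the direct computation above is shorter and suffices.
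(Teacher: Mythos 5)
Your proof is correct and is essentially the paper's argument in lightly rearranged form: the paper applies each $D_i$ to \eqref{semi} and tests with $D_iu$, producing the same cross term $\frac{2}{\omega_0}(F(Du),\Delta_{\omega_*}u)$ that you obtain by testing directly with $\Delta_{\omega_*}u$, with the discarded dissipation appearing as $\frac{\omega_0}{2}\|D^2u\|_{l^2}^2$ instead of your $\frac{2}{\omega_0}\|\Delta_{\omega_*}u\|_{l^2}^2$, and both terms are then controlled via \eqref{sob} and \eqref{small_energy} exactly as you do. Your explicit chain-rule justification that $t\mapsto\EE(u(t))$ is differentiable (via boundedness of $D$ on $l^2(H)$) is a point the paper leaves implicit, but it does not change the substance of the argument.
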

\begin{proof}
Let $\epsilon_2$ be given in Lemma~\ref{l2}. 
    By the equation \eqref{semi}, we have
    \begin{align}\label{homo1}
        \ddt{D_iu}-\Delta_{\omega_*}(D_i u)=D_i F(Du),~\forall D_i\in \mathcal{D}.
    \end{align}
Note that $$\sup_{t\in[0,\infty)}\|Du(t)\|_{l^2}\leq C\sup_{t\in[0,\infty)}\|u(t)\|_{l^2}<\infty.$$  The same energy estimate as in Lemma~\ref{l2} yields that
    \begin{align}
        \frac{1}{2}\ddt{}\|{D_iu}\|_{l^2}^2+\omega_0\EE(D_i u)=(D_iF(Du),D_iu)\label{homo3},~\forall D_i\in \mathcal{D}.
    \end{align}
   Summing over $D_i\in\mathcal{D}$,  we get
    \begin{align}\label{homo4}
    &\left\rvert \ddt{\EE(u)}+\frac{\omega_0}{2}\|D^2 u\|_{l^2}^2\right\rvert=\left\rvert2\frac{1}{\omega_0}(F(Du),\Delta_{\omega_*} u)\right\rvert.\nonumber\\&\le
    C\EE(u)\|D^2u\|_{l^2}\le C'\|u\|_{l^2}^{3},
    \end{align} where we have used \eqref{sob}.
   Therefore, by \eqref{sob},
   $$
   \left|\ddt{\EE(u(t))}\right|\leq \frac{w_0}{2}\|D^2 u\|_{l^2}^2+C'\|u\|_{l^2}^{3}\leq C,\quad \forall t\geq 0.
   $$
   This proves the result.
\end{proof}
We need a classical lemma in the calculus.
\begin{lem}[Barbalat's Lemma]\label{Barbalat's Lemma}
    Let $f\in C([0,\infty);\mathbb{R})$ be a uniformly continuous function. Suppose that $\int_0^\infty f(t)\mathrm{d}t<\infty$, then
    \[
    \lim_{t\rightarrow\infty}f(t)=0.   \]
\end{lem}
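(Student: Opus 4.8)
The plan is to argue by contradiction, exploiting uniform continuity to convert a failure of $f(t)\to 0$ into a violation of the Cauchy criterion for the improper integral $\int_0^\infty f$. This is the standard route, and it involves essentially no computation; the work lies entirely in setting up the right constants.

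Suppose that $f(t)\not\to 0$. From the negation of the limit I would first extract a constant $\epsilon>0$ and a sequence $t_n\to\infty$ with $|f(t_n)|\ge\epsilon$ for all $n$. Using uniform continuity, I would then fix $\delta>0$ so that $|t-s|<\delta$ forces $|f(t)-f(s)|<\frac{\epsilon}{2}$. The purpose of this choice is that on each interval $[t_n,t_n+\delta]$ the function $f$ cannot drop below $\frac{\epsilon}{2}$ in absolute value, nor change sign, since $|f(t)-f(t_n)|<\frac{\epsilon}{2}<\epsilon\le|f(t_n)|$ there. Consequently $\left|\int_{t_n}^{t_n+\delta}f(t)\,\mathrm{d}t\right|=\int_{t_n}^{t_n+\delta}|f(t)|\,\mathrm{d}t\ge\frac{\epsilon\delta}{2}$, a positive lower bound that is uniform in $n$.

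Next I would invoke the convergence of $\int_0^\infty f$. The Cauchy criterion for an improper integral says that for every $\eta>0$ there is an $R$ such that $\left|\int_a^b f(t)\,\mathrm{d}t\right|<\eta$ whenever $b>a\ge R$. Since $t_n\to\infty$, for all large $n$ both endpoints $t_n$ and $t_n+\delta$ exceed $R$, so $\left|\int_{t_n}^{t_n+\delta}f(t)\,\mathrm{d}t\right|\to 0$ as $n\to\infty$. This contradicts the lower bound $\frac{\epsilon\delta}{2}>0$ obtained above, which completes the argument. Passing to a subsequence for which the intervals $[t_n,t_n+\delta]$ are pairwise disjoint is a convenient mental picture but is not strictly needed, as the tail estimate applies to each individual interval.

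The step I would treat most carefully — the only genuinely subtle point — is the sign-preservation claim on $[t_n,t_n+\delta]$. It is precisely the calibration of the oscillation bound $\frac{\epsilon}{2}$ against the floor $\epsilon$ on $|f(t_n)|$ that forces $f$ to keep a definite sign on that interval, so that the integral and its absolute value agree and can be bounded below by $\frac{\epsilon\delta}{2}$; without this, cancellation inside the interval could destroy the lower bound. Everything else is routine bookkeeping with the Cauchy criterion for improper integrals.
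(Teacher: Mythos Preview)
Your argument is correct and is precisely the standard proof of Barbalat's Lemma: negate the conclusion, use uniform continuity to get a uniform lower bound $\tfrac{\epsilon\delta}{2}$ on $\left|\int_{t_n}^{t_n+\delta} f\right|$, and contradict the Cauchy criterion for the convergent improper integral. Your handling of the sign-preservation step is exactly the point that needs care, and you treat it correctly.

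The paper itself does not prove this lemma; it merely cites it as ``a classical lemma in the calculus'' and uses it as a black box to conclude that $\mathcal{E}(u(t))\to 0$ from the facts that $\int_0^\infty \mathcal{E}(u(t))\,\mathrm{d}t<\infty$ and $\tfrac{\mathrm{d}}{\mathrm{d}t}\mathcal{E}(u(t))$ is bounded (hence $\mathcal{E}(u(t))$ is uniformly continuous). So there is nothing to compare; your proof simply fills in what the paper takes for granted.
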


Now we prove Theorem~\ref{cauchy_problem}.
\begin{proof}[Proof of Theorem~\ref{cauchy_problem}]
    Let $\epsilon_0$ be a small constant to be chosen. We denote by $P_t$ the semigroup of the Laplacian operator $\Delta_{\omega^*}$. For any $u_0\in l^2(H)$ with $\|u_0\|_{l^2}\le\epsilon_0,$ we consider the space
    \[
    X=\{u\in C([0,1];l^2(H)):u(0)=u_0,\|u\|_{L^\infty([0,1];l^2(H))}\le2\epsilon_0\},
    \] which is a subset of Banach space $C([0,1];l^2(H))$ with the norm $L^\infty([0,1];l^2(H)).$ We will prove that there is a solution, a weak solution in time, of \eqref{semi} in $X.$ Then for small $\epsilon_0,$ by the regularity theory we get $u\in C^1([0,1];l^2(H))$ using the structure of \eqref{semi}.
  For any $u\in X,$ define the mapping $S$ as
    \[
    Su(t)=P_tu(0)+\int_0^tP_{t-s}F(Du(s))\mathrm{d}t,\quad t\in [0,1].
    \]
  Choose $\epsilon_0\leq \frac{\epsilon_1}{4},$ where $\epsilon_1$ is given in Proposition \ref{prop:Fest}. Note that $\|P_t f\|_2\leq \|f\|_2$ for any $f\in l^2(H), t\geq 0.$ 
  Then by \eqref{small_energy} we have
    \[
    \|Su(t)\|_{l^2}\le \|u(0)\|_{l^2}+C_2\EE(u(t))\le\epsilon_0+C_3\epsilon_0^2\leq 2\epsilon_0,~\forall t\in[0,1],
    \] where we choose $\epsilon_0\leq \frac{1}{C_3}.$
    Therefore we have a mapping $S:X\to X.$

Now we need to show that $S$ is a contraction mapping when $\epsilon_0$ is small. Consider two functions $h_1,h_2 \in X.$ For $t\in[0,1],$ by \eqref{eq:Fest} we have
\begin{align}
    \|Sh_1(t)-Sh_2(t)\|_{l^2}&\le\left\|\int_0^tP_{t-s}(F(Dh_1(s))-F(Dh_2(s)))\mathrm{d}s\right\|_{l^2}\nonumber
    \\&\le\int_0^1\left\|\int_0^1\nabla F(\sigma Dh_1(s)+(1-\sigma) Dh_2(s)))\mathrm{d}\sigma\cdot(Dh_1(s)-Dh_2(s))\right\|_{l^2}\mathrm{d}s\nonumber
    \\&\le C_4\epsilon_0\int_0^1\|Dh_1(s)-Dh_2(s)\|_{l^2}\mathrm{d}s\nonumber\\
    &\le C_5\epsilon_0\|h_1-h_2\|_{L^{\infty}([0,1];l^2(H))}\\
    &\le \frac12\|h_1-h_2\|_{L^{\infty}([0,1];l^2(H))},
\end{align}
if $\epsilon_0\leq \frac{1}{2C_5}.$  Hence we choose $\epsilon_0:=\min\{\frac{\epsilon_1}{4},\frac{1}{C_3},\frac{1}{2C_5},\frac{\epsilon_2}{2}\},$ where $\epsilon_2$ is given in Lemma~\ref{l2} and Lemma~\ref{energy_etimate}. This yields that $S$ is a contraction mapping, and by the contraction mapping theorem there is a fixed point, i.e. a solution of \eqref{semi}, $u\in X.$ That is, there is a solution for any initial data $u_0$ with $\|u_0\|_{l^2}\le\epsilon_0.$

For such a solution $u\in X,$ $\|u\|_{L^{\infty}([0,1];l^2(H))}\leq 2\epsilon_0\leq \epsilon_2.$ Hence by Lemma~\ref{l2}, $\|u(1)\|_{l^2}\le \|u(0)\|_{l^2}\leq \epsilon_0.$ Therefore, we can extend the solution to the time interval $[1,2],$ and to all $[n,n+1]$ by induction. We obtain an entire solution 
\[
u(t)\in L^{\infty}([0,\infty),l^2(H)).
\]
By Lemma \ref{energy_etimate} and Lemma  \ref{Barbalat's Lemma}, we have $$\EE(u(t))\rightarrow 0,\quad t\to \infty.$$ This proves the theorem.
\end{proof}

\begin{rem}
    Note that in continuous cases, similar results always require that the $H^s$ norm of the initial data is sufficiently small for some $s>4;$ see e.g. \cite{zheng2004nonlinear}. However, for the discrete case, we only require the initial data to have a small $l^2$ norm.
\end{rem}

Finally, we prove Theorem \ref{mainthm}.
\begin{proof}[\textbf{Proof of Theorem \ref{mainthm}}]

    By Theorem \ref{cauchy_problem}, we obtain that $\|\EE(u(t))\|_{l^2}\rightarrow 0$ as $t\rightarrow \infty.$ Since $\|u(t)\|_{l^2}$ is bounded, by Proposition \ref{infinity_converge} we have $\|u(t)\|_{l^\infty}\rightarrow 0$ as $t\to\infty.$ Moreover, by Theorem \ref{thm:bdd}, the solution to the CRF is unique. This proves the theorem.
\end{proof}

\section{The conjectures}
For Conjecture \ref{conj}, (B) and (C) are equivalent due to the theorem of He. Moreover, one easily sees that (A) implies (B), since the convergence of the hyperbolic CRF guarantees a hyperbolic circle-packing metric. Therefore, one only need to verify that $(C)$ implies $(A)$.

Besides circle packings discussed in our article, ideal circle patterns, related to ideal polyhedra in hyperbolic 3-space $\mathbb{H}^3,$ are of great importance; see \cite{MR1370757,MR2022715}. 
In brief, for an ideal polyhedron $\mathcal{P}$ in $\mathbb{H}^3$, each face $\mathcal{F}$ of $\mathcal{P}$ corresponds to a disc $\mathcal{D_{\mathcal{F}}}$ on the unit sphere $\mathbb{S}^2$ of the Poincar\'e ball model. Then $\{\partial\mathcal{D}_\mathcal{F}\}_{\mathcal{F}~\text{is a face of}~ \mathcal{P}}$ forms an ideal circle pattern $\mathcal{I}$ on $\mathbb{S}^2$. We define the carrier of the ideal polyhedron $\mathcal{P}$ by
\[\mathrm{Carrier}(\mathcal{P})=\cup_{\mathcal{F}~\text{is a face of}~ \mathcal{P}}\mathcal{D}_\mathcal{F}.\]

For ideal circle packings on compact surfaces, combinatorial Ricci flows were introduced before; see \cite[Equation (6),(7)]{ge2021combinatorial}. It seems feasible to define combinatorial Ricci flows for ideal circle patterns on infinite cellular decompositions. We have the following conjecture.
\begin{conjecture}
    Let $\mathcal{T}=(V,E,F)$ be an infinite cellular decomposition of an open disk with intersection angles $\Phi\in (0,\pi)^E$, then the following statements are equivalent:
    \begin{enumerate}[(A')]
        \item The CRF on $\mathcal{T}$ converges for any initial value in hyperbolic background geometry.
        \item There exists a locally finite ideal circle pattern with contact graph $\mathcal{T}$ in the unit disc.
        \item The $1-$skeleton of cellular decomposition $\mathcal{T}$ is VEL-hyperbolic.
        \item There exists an infinite ideal polyhedron $\mathcal{P}$ in $\mathbb{H}^3$ that is combinatorial equivalent to the Poincar\'e dual of $\mathcal{T}$, whose carrier is an open hemisphere of $~\mathbb{S}^2$.
    \end{enumerate}
\end{conjecture}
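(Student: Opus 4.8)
The plan is to mirror the architecture already used for Conjecture \ref{conj}, splitting the four-fold equivalence into a purely combinatorial-geometric part relating (B'), (C'), (D') and a dynamical part relating these to the flow statement (A'). First I would make the flow in (A') precise by adapting the combinatorial Ricci flow for ideal circle patterns of \cite[Equation (6),(7)]{ge2021combinatorial} to the infinite cellular decomposition $\mathcal{T}$, writing it in terms of conformal factors with a well-defined discrete curvature, so that the entire well-posedness apparatus of Section \ref{sec:3} acquires a meaning in this setting.

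For the static equivalences, I would first establish (B') $\Leftrightarrow$ (D') through the standard dictionary between ideal circle patterns on $\mathbb{S}^2$ and ideal polyhedra in $\mathbb{H}^3$ in the Poincar\'e ball model: each face $\mathcal{F}$ of $\mathcal{P}$ determines the disc $\mathcal{D}_{\mathcal{F}}$ bounded by a pattern circle, and the requirement that $\mathrm{Carrier}(\mathcal{P})$ be an open hemisphere is exactly the statement that the pattern is confined to a round disc, i.e. to the unit disc after a M\"obius normalization. For (B') $\Leftrightarrow$ (C') I would seek an extension of He's rigidity/existence theorem (Theorem \ref{he}) to ideal patterns, characterizing VEL-hyperbolicity of the $1$-skeleton by realizability in the disc rather than the plane; I expect analogues of the combinatorial conditions (C1)--(C2) to enter, now adapted to intersection angles in $(0,\pi)^E$.

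The dynamical part splits into an easy and a hard direction. The implication (A') $\Rightarrow$ (B') should be immediate: convergence of the flow for even a single initial value produces a limiting conformal factor with vanishing discrete curvature, and a zero-curvature ideal pattern is automatically realized in hyperbolic geometry, hence in the unit disc, yielding (B'). For (C') $\Rightarrow$ (A') I would first transplant the well-posedness scheme of Section \ref{sec:3}---long-time existence via an exhaustion by finite subcomplexes, a maximum principle for the discrete curvature, and uniqueness under bounded curvature through the parabolic comparison Lemma \ref{mp2}---to the ideal-pattern flow, and then attempt to drive the flow to the VEL-hyperbolic target pattern supplied by the static equivalences, for arbitrary initial data.

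The main obstacle is exactly (C') $\Rightarrow$ (A'): establishing convergence for every initial value with neither a sign condition on the initial curvature nor a smallness hypothesis. This is the same gap that remains open for Conjecture \ref{conj}, since the present paper proves convergence only under non-positive initial curvature (Theorem \ref{converge_hyp}) or for small $l^2$ perturbations of the regular packing (Theorem \ref{mainthm}). I expect that closing it will demand a new global mechanism---an infinite-dimensional Lyapunov functional, or a comparison principle anchored at the target pattern---replacing the convex energy that is available only for finite decompositions, and for the ideal-pattern flow one must additionally ensure that the conformal factors do not degenerate at the boundary of the admissible range along the flow.
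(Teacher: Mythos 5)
You should note first that the statement you were given is, in the paper, a \emph{conjecture}: the authors state it at the very end precisely because no proof is known, and they even hedge on the definition of the object in (A') itself (``It seems feasible to define combinatorial Ricci flows for ideal circle patterns on infinite cellular decompositions''). So there is no proof in the paper to compare yours against, and your proposal, read as a proof, has a genuine --- and, to your credit, admitted --- gap: the implication (C') $\Rightarrow$ (A') is exactly the open content of the conjecture, just as (C) $\Rightarrow$ (A) is the open content of Conjecture \ref{conj}, and your text offers only the hope of ``a new global mechanism'' (a Lyapunov functional or a comparison principle anchored at the target pattern) without constructing one. The paper's own partial results go no further: Theorem \ref{converge_hyp} requires non-positive initial curvature, and Theorem \ref{mainthm} requires a small $l^2$ perturbation on the hexagonal lattice in Euclidean geometry; neither covers arbitrary initial data, and neither transfers automatically to ideal patterns.

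Beyond the admitted gap, two steps you treat as essentially standard are themselves unproven in the generality you need. First, your (B') $\Leftrightarrow$ (C') step invokes ``an extension of He's rigidity/existence theorem,'' but Theorem \ref{he} is stated for disk triangulations with $\Phi\in[0,\frac{\pi}{2}]^E$ satisfying (C1)--(C2); an analogue for ideal circle patterns with $\Phi\in(0,\pi)^E$ on cellular decompositions is not available in the paper or its cited literature and would be a separate theorem to establish, not a routine adaptation. Second, your (A') $\Rightarrow$ (B') step silently uses a Stephenson-type maximal-packing argument (the one the paper uses for Corollary \ref{negative}) to pass from a zero-curvature metric to a \emph{locally finite} pattern whose carrier is the unit disc; for ideal patterns with possibly obtuse intersection angles this passage needs justification, as does the transplantation of the entire well-posedness machinery of Section \ref{sec:3} (in particular Lemma \ref{est_deri} and the weight bound \eqref{esti_weight}) to the ideal-pattern flow of \cite{ge2021combinatorial}, where the relevant derivative estimates have not been verified. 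In short: your outline correctly mirrors the architecture the authors envisage --- static equivalences (B') $\Leftrightarrow$ (C') $\Leftrightarrow$ (D') plus the easy dynamical direction --- and correctly isolates where the difficulty lies, but it is a research program, not a proof; the conjecture remains open.
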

\textbf{Acknowledgements.} The authors would like to thank Lang Qin for helpful discussions and suggestions for the Ricci flow. H. Ge is supported by NSFC, no.12341102, no.12122119. B. Hua is supported by NSFC, no.12371056, and by Shanghai Science and Technology Program [Project No. 22JC1400100]. 
\section{Appendix}\label{appendix}
We will prove Lemma \ref{est_deri} in this appendix. For triangles in Euclidean background geometry, the lemma was proved by He, see \cite[Lemma 3.2]{HE}. Therefore, it is sufficient to prove Lemma \ref{est_deri} for triangles in hyperbolic background geometry.
\begin{figure}[htbp]
\centering
\includegraphics[scale=0.16]{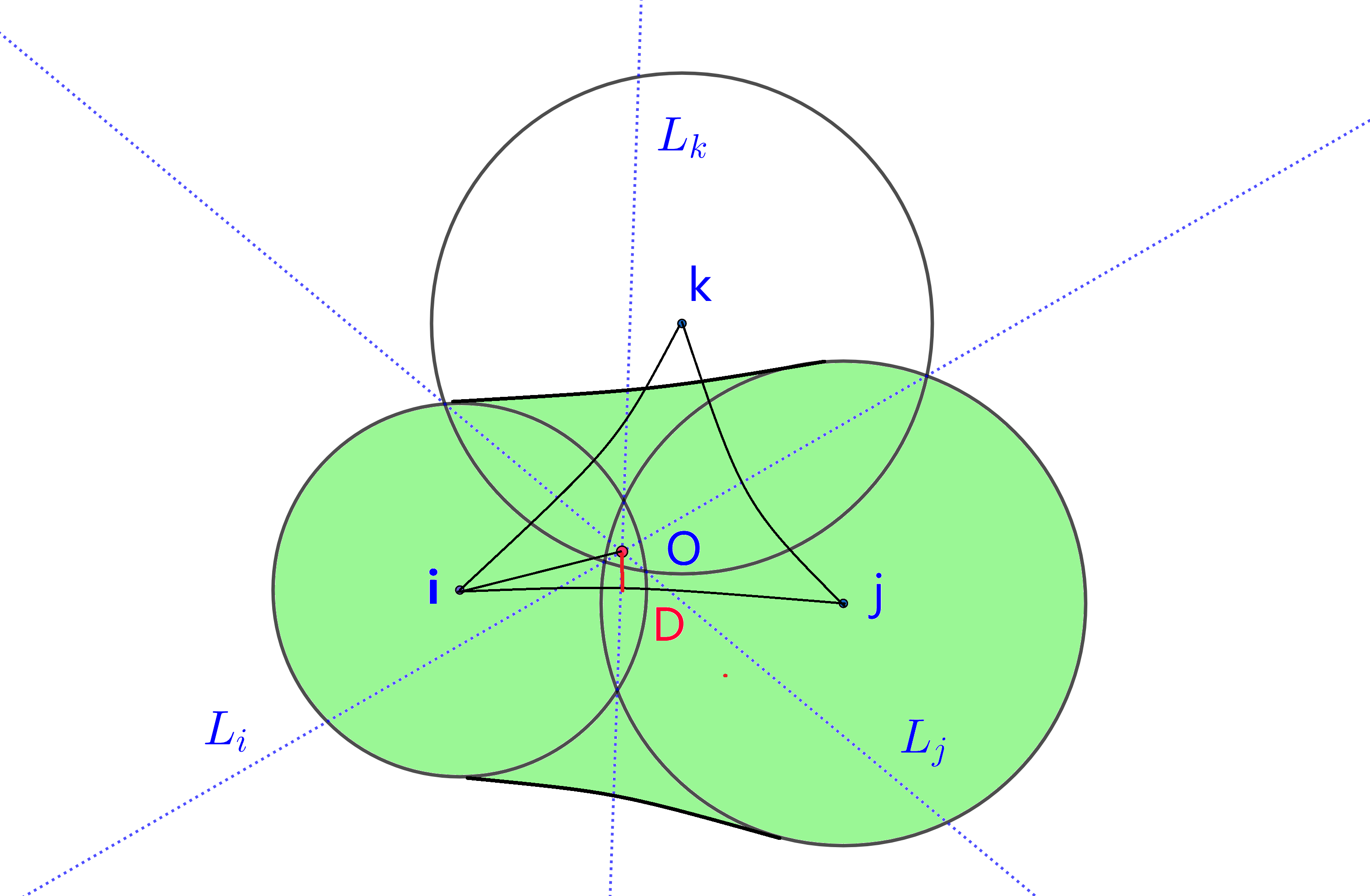}
\captionof{figure}{\small A circle packing of a triangle with power center locating at the origin in the Poincar\'e disk model.}
  \label{estimategraph}
\end{figure} 

\begin{proof}[\textbf{Proof of Lemma \ref{est_deri}}]
    Here we first recall the geometric interpretation of the partial derivative $\pp{\theta_i}{u_j}$ for circle packing in hyperbolic background geometry, which can be found in the work of Chow and Luo \cite{2003Combinatorial}.

    Let $\{i,j,k\}$ be a triangle with circle-packing metric and $\mathcal{C}_i, \mathcal{C}_j$ and $\mathcal{C}_k$ be three corresponding circles as shown in Figure \ref{estimategraph}.\ We denote by $L_i,L_j,L_k$ the geodesic lines passing through the pairs of intersection points of $\{\mathcal{C}_j,\mathcal{C}_k\}$, $\{\mathcal{C}_i,\mathcal{C}_k\}$ and $\{\mathcal{C}_i,\mathcal{C}_j\}$ respectively. Then it is known that $L_i, L_j$ and $L_k$ have a common point $O$ in the hyperbolic plane. The point $O$ is called the power center in the literature; see e.g.  \cite{gu2008computational}. If the intersection angles $\Phi\in\mathbb[0,\frac{\pi}{2}]^3$, $O$ is located inside the triangle $\{i,j,k\}.$ One easily sees that $L_k$ is perpendicular to the edge $\{i,j\},$ and let $D$ be the foot of perpendicular as shown in Figure \ref{estimategraph}.   
    Let $l_{OD}$ be the distance from the the power center $O$ to the edge $\{i,j\}.$ It is proved in \cite{2003Combinatorial} that 
    \[
    \pp{\theta_i^{jk}}{u_j}=\frac{\tanh{l_{OD}}}{\sinh{l_{ij}}}.
    \]
Therefore, by the trigonometry of hyperbolic triangles $\tanh{l_{OD}}=\tan\angle Oij\sinh l_{iD}$, we have
\[
\pp{\theta_i^{jk}}{u_j}\le\tan\angle Oij\le\tan \theta_i^{jk}.
\]
Therefore, there exists a $C_1>0$ such that 
\begin{align}\label{estimate_length}
    \pp{\theta_i^{jk}}{u_j}\le C_1\theta_i^{jk} ~~\text{if}~~\theta_i^{jk}\in[0,1].
\end{align}
Since $O$ is contained in the geodesic convex hull of circles $\mathcal{C}_i$ and $\mathcal{C}_j$, which is colored in green in Figure \ref{estimategraph}, one easily sees that $l_{OD}\le \max\{r_i,r_j\}$. Therefore, we have
\[
\pp{\theta_i^{jk}}{u_j}\le\frac{\sinh{l_{OD}}}{\sinh{l_{ij}}} \le1.
\]
Hence, by \eqref{estimate_length} we conclude that $\pp{\theta_i^{jk}}{u_j}\le C\theta_i^{jk}$ for some constant $C>0.$

\end{proof}

\bibliographystyle{plain}
\bibliography{reference}

\begin{thebibliography}{10}

\bibitem{MR3958792}
Richard~H. Bamler, Esther Cabezas-Rivas, and Burkhard Wilking.
\newblock The {R}icci flow under almost non-negative curvature conditions.
\newblock {\em Invent. Math.}, 217(1):95--126, 2019.

\bibitem{MR2022715}
Alexander~I. Bobenko and Boris~A. Springborn.
\newblock Variational principles for circle patterns and {K}oebe's theorem.
\newblock {\em Trans. Amer. Math. Soc.}, 356(2):659--689, 2004.

\bibitem{bowers2025infinite}
P.~L. Bowers and L.~Ruffoni.
\newblock Infinite circle packings on surfaces with conical singularities.
\newblock {\em Comput. Geom.}, 127:Paper No. 102160, 13, 2025.

\bibitem{MR3429162}
Esther Cabezas-Rivas and Burkhard Wilking.
\newblock How to produce a {R}icci flow via {C}heeger-{G}romoll exhaustion.
\newblock {\em J. Eur. Math. Soc. (JEMS)}, 17(12):3153--3194, 2015.

\bibitem{MR3480020}
Albert Chau, Ka-Fai Li, and Luen-Fai Tam.
\newblock Deforming complete {H}ermitian metrics with unbounded curvature.
\newblock {\em Asian J. Math.}, 20(2):267--292, 2016.

\bibitem{MR2520796}
Bing-Long Chen.
\newblock Strong uniqueness of the {R}icci flow.
\newblock {\em J. Differential Geom.}, 82(2):363--382, 2009.

\bibitem{MR2260930}
Bing-Long Chen and Xi-Ping Zhu.
\newblock Uniqueness of the {R}icci flow on complete noncompact manifolds.
\newblock {\em J. Differential Geom.}, 74(1):119--154, 2006.

\bibitem{Chow1}
Bennett Chow.
\newblock The {R}icci flow on the {$2$}-sphere.
\newblock {\em J. Differential Geom.}, 33(2):325--334, 1991.

\bibitem{2003Combinatorial}
Bennett Chow and Feng Luo.
\newblock Combinatorial {R}icci flows on surfaces.
\newblock {\em J. Differential Geom.}, 63(1):97--129, 2003.

\bibitem{chung1997spectral}
Fan R.~K. Chung.
\newblock {\em Spectral graph theory}, volume~92 of {\em CBMS Regional Conference Series in Mathematics}.
\newblock Conference Board of the Mathematical Sciences, Washington, DC; by the American Mathematical Society, Providence, RI, 1997.

\bibitem{MR1106755}
Yves Colin~de Verdi\`ere.
\newblock Un principe variationnel pour les empilements de cercles.
\newblock {\em Invent. Math.}, 104(3):655--669, 1991.

\bibitem{MR1371208}
Panagiota Daskalopoulos and Manuel~A. del Pino.
\newblock On a singular diffusion equation.
\newblock {\em Comm. Anal. Geom.}, 3(3-4):523--542, 1995.

\bibitem{MR4466650}
Ke~Feng, Huabin Ge, and Bobo Hua.
\newblock Combinatorial {R}icci flows and the hyperbolization of a class of compact 3-manifolds.
\newblock {\em Geom. Topol.}, 26(3):1349--1384, 2022.

\bibitem{ge_phd}
H.~Ge.
\newblock Combinatorial methods and geometric equations.
\newblock {\em Ph.D. Thesis, Peking University, Beijing}, 2012.

\bibitem{MR4334399}
Huabin Ge, Bobo Hua, and Ze~Zhou.
\newblock Circle patterns on surfaces of finite topological type.
\newblock {\em Amer. J. Math.}, 143(5):1397--1430, 2021.

\bibitem{ge2021combinatorial}
Huabin Ge, Bobo Hua, and Ze~Zhou.
\newblock Combinatorial {R}icci flows for ideal circle patterns.
\newblock {\em Adv. Math.}, 383:Paper No. 107698, 26, 2021.

\bibitem{MR4024520}
Huabin Ge and Wenshuai Jiang.
\newblock On the deformation of inversive distance circle packings, {I}.
\newblock {\em Trans. Amer. Math. Soc.}, 372(9):6231--6261, 2019.

\bibitem{MR4761889}
Huabin Ge and Aijin Lin.
\newblock The character of {T}hurston's circle packings.
\newblock {\em Sci. China Math.}, 67(7):1623--1640, 2024.

\bibitem{MR3269185}
Huabin Ge and Xu~Xu.
\newblock Discrete quasi-{E}instein metrics and combinatorial curvature flows in 3-dimension.
\newblock {\em Adv. Math.}, 267:470--497, 2014.

\bibitem{MR2832165}
Gregor Giesen and Peter~M. Topping.
\newblock Existence of {R}icci flows of incomplete surfaces.
\newblock {\em Comm. Partial Differential Equations}, 36(10):1860--1880, 2011.

\bibitem{MR860324}
A.~A. Grigor'yan.
\newblock Stochastically complete manifolds.
\newblock {\em Dokl. Akad. Nauk SSSR}, 290(3):534--537, 1986.

\bibitem{MR3822363}
A.~A. Grigor’yan.
\newblock {\em Introduction to analysis on graphs}, volume~71 of {\em University Lecture Series}.
\newblock American Mathematical Society, Providence, RI, 2018.

\bibitem{MR3825607}
Xianfeng Gu, Ren Guo, Feng Luo, Jian Sun, and Tianqi Wu.
\newblock A discrete uniformization theorem for polyhedral surfaces {II}.
\newblock {\em J. Differential Geom.}, 109(3):431--466, 2018.

\bibitem{MR3807319}
Xianfeng~David Gu, Feng Luo, Jian Sun, and Tianqi Wu.
\newblock A discrete uniformization theorem for polyhedral surfaces.
\newblock {\em J. Differential Geom.}, 109(2):223--256, 2018.

\bibitem{gu2008computational}
Xianfeng~David Gu and Shing-Tung Yau.
\newblock {\em Computational conformal geometry}, volume~3 of {\em Advanced Lectures in Mathematics (ALM)}.
\newblock International Press, Somerville, MA; Higher Education Press, Beijing, 2008.
\newblock With 1 CD-ROM (Windows, Macintosh and Linux).

\bibitem{ham}
Richard~S. Hamilton.
\newblock The {R}icci flow on surfaces.
\newblock In {\em Mathematics and general relativity ({S}anta {C}ruz, {CA}, 1986)}, volume~71 of {\em Contemp. Math.}, pages 237--262. Amer. Math. Soc., Providence, RI, 1988.

\bibitem{MR4494617}
Robert Haslhofer.
\newblock Uniqueness and stability of singular {R}icci flows in higher dimensions.
\newblock {\em Proc. Amer. Math. Soc.}, 150(12):5433--5437, 2022.

\bibitem{HE}
Zheng-Xu He.
\newblock Rigidity of infinite disk patterns.
\newblock {\em Ann. of Math. (2)}, 149(1):1--33, 1999.

\bibitem{He_schramm}
Zheng-Xu He and O.~Schramm.
\newblock Hyperbolic and parabolic packings.
\newblock {\em Discrete Comput. Geom.}, 14(2):123--149, 1995.

\bibitem{Doyle_spirals}
Bobo Hua and Puchun Zhou.
\newblock The rigidity of doyle circle packings on the infinite hexagonal triangulation.
\newblock {\em arXiv preprint arXiv:2404.11258}, 2024.

\bibitem{MR2538937}
James Isenberg and Mohammad Javaheri.
\newblock Convergence of {R}icci flow on {$\Bbb R^2$} to flat space.
\newblock {\em J. Geom. Anal.}, 19(4):809--816, 2009.

\bibitem{MR2545867}
Lizhen Ji, Rafe Mazzeo, and Natasa Sesum.
\newblock Ricci flow on surfaces with cusps.
\newblock {\em Math. Ann.}, 345(4):819--834, 2009.

\bibitem{koebe1936origin}
P.~Koebe.
\newblock {\em Kontaktprobleme der konformen Abbildung}.
\newblock Hirzel Stuttgart, 1936.

\bibitem{MR4015429}
Man-Chun Lee.
\newblock On the uniqueness of {R}icci flow.
\newblock {\em J. Geom. Anal.}, 29(4):3098--3112, 2019.

\bibitem{MR834612}
Peter Li and Shing-Tung Yau.
\newblock On the parabolic kernel of the {S}chr\"odinger operator.
\newblock {\em Acta Math.}, 156(3-4):153--201, 1986.

\bibitem{luo2004combinatorial}
Feng Luo.
\newblock Combinatorial {Y}amabe flow on surfaces.
\newblock {\em Commun. Contemp. Math.}, 6(5):765--780, 2004.

\bibitem{MR3049633}
Li~Ma.
\newblock Convergence of {R}icci flow on {$R^2$} to the plane.
\newblock {\em Differential Geom. Appl.}, 31(3):388--392, 2013.

\bibitem{MR1370757}
Igor Rivin.
\newblock A characterization of ideal polyhedra in hyperbolic {$3$}-space.
\newblock {\em Ann. of Math. (2)}, 143(1):51--70, 1996.

\bibitem{Rodin_Sullivan}
Burt Rodin and Dennis Sullivan.
\newblock The convergence of circle packings to the {R}iemann mapping.
\newblock {\em J. Differential Geom.}, 26(2):349--360, 1987.

\bibitem{MR1244661}
Oded Schramm.
\newblock Square tilings with prescribed combinatorics.
\newblock {\em Israel J. Math.}, 84(1-2):97--118, 1993.

\bibitem{Shi_noncompact}
Wan-Xiong Shi.
\newblock Deforming the metric on complete {R}iemannian manifolds.
\newblock {\em J. Differential Geom.}, 30(1):223--301, 1989.

\bibitem{Stephenson_intro}
Kenneth Stephenson.
\newblock {\em Introduction to circle packing}.
\newblock Cambridge University Press, Cambridge, 2005.
\newblock The theory of discrete analytic functions.

\bibitem{thurston1980geometry}
W.~P. Thurston.
\newblock The geometry and topology of three-manifolds.
\newblock {\em \href{http://www.msri.org/gt3m}{http://www.msri.org/gt3m}}, 1980.

\bibitem{MR3728651}
Peter~M. Topping.
\newblock Ricci flows with unbounded curvature.
\newblock In {\em Proceedings of the {I}nternational {C}ongress of {M}athematicians---{S}eoul 2014. {V}ol. {II}}, pages 1033--1048. Kyung Moon Sa, Seoul, 2014.

\bibitem{MR3352241}
Peter~M. Topping.
\newblock Uniqueness of instantaneously complete {R}icci flows.
\newblock {\em Geom. Topol.}, 19(3):1477--1492, 2015.

\bibitem{ToppingYin24}
Peter~M. Topping and Hao Yin.
\newblock Uniqueness of {R}icci flows from non-atomic {R}adon measures on {R}iemann surfaces.
\newblock {\em Proc. Lond. Math. Soc. (3)}, 128(6):Paper No. e12600, 34, 2024.

\bibitem{wu1993ricci}
Lang-Fang Wu.
\newblock The {R}icci flow on complete {${\bf R}^2$}.
\newblock {\em Comm. Anal. Geom.}, 1(3-4):439--472, 1993.

\bibitem{MR3091259}
Guoyi Xu.
\newblock Short-time existence of the {R}icci flow on noncompact {R}iemannian manifolds.
\newblock {\em Trans. Amer. Math. Soc.}, 365(11):5605--5654, 2013.

\bibitem{MR2520032}
Hao Yin.
\newblock Normalized {R}icci flow on nonparabolic surfaces.
\newblock {\em Ann. Global Anal. Geom.}, 36(1):81--104, 2009.

\bibitem{zheng2004nonlinear}
Songmu Zheng.
\newblock {\em Nonlinear evolution equations}, volume 133 of {\em Chapman \& Hall/CRC Monographs and Surveys in Pure and Applied Mathematics}.
\newblock Chapman \& Hall/CRC, Boca Raton, FL, 2004.

\bibitem{zhou2023generalizing}
Ze~Zhou.
\newblock Generalizing andreev's theorem via circle patterns.
\newblock {\em arXiv preprint arXiv:2308.14386}, 2023.

\bibitem{MR3156988}
Xiaorui Zhu.
\newblock Ricci flow on open surface.
\newblock {\em J. Math. Sci. Univ. Tokyo}, 20(3):435--444, 2013.

\end{thebibliography}

\noindent Huabin Ge, hbge@ruc.edu.cn\\
\emph{School of Mathematics, Renmin University of China, Beijing 100872, P. R. China}\\[-8pt]

\noindent Bobo Hua, bobohua@fudan.edu.cn\\[2pt]
\emph{School of Mathematical Sciences, LMNS, Fudan University, Shanghai, 200433, P.R. China}
\\
\noindent Puchun Zhou, pczhou22@m.fudan.edu.cn
\emph{School of Mathematical Sciences, Fudan University, Shanghai, 200433, P.R. China}
\end{document}